\documentclass[oneside,english,british,reqno]{amsart}
\usepackage[T1]{fontenc}
\usepackage[latin9]{inputenc}
\usepackage{geometry}
\usepackage{changepage}
\usepackage{amsthm}
\usepackage{amstext}
\usepackage{amssymb}
\usepackage{graphicx}
\usepackage{esint}
\usepackage{subfig}
\usepackage{mathtools}
\usepackage{babel}
\usepackage[breaklinks=true,a4paper=true,pagebackref=true]{hyperref}
\usepackage{color}
\usepackage{rotating}

\makeatletter
\numberwithin{equation}{section}
\numberwithin{figure}{section}
\theoremstyle{plain}
\newtheorem{thm}{\protect\theoremname}[section]
  \theoremstyle{plain}
  
  \theoremstyle{plain}
  \newtheorem{lem}[thm]{\protect\lemmaname}
  \theoremstyle{plain}
  \newtheorem{prop}[thm]{\protect\propname}
  \theoremstyle{remark}
  
  \theoremstyle{plain}
  
  \theoremstyle{definition}
  \newtheorem{defn}[thm]{\protect\definitionname}
  \theoremstyle{plain}
  \newtheorem{cor}[thm]{\protect\corollaryname}

\makeatother
\providecommand{\conjecturename}{Conjecture}
\providecommand{\definitionname}{Definition}
\providecommand{\factname}{Fact}
\providecommand{\lemmaname}{Lemma}
\providecommand{\propname}{Proposition}
\providecommand{\remarkname}{Remark}
\providecommand{\theoremname}{Theorem}
\providecommand{\corollaryname}{Corollary}

\def\EC{\mathcal{E}}
\global\long\def\cbr#1{\left\{  #1\right\}  }
\global\long\def\rbr#1{\left(#1\right)}
\global\long\def\R{\mathbb{R}}

\global\long\def\dd#1{{d}#1}

\global\long\def\eqdef{:=}
\def\eps{\varepsilon}
\def\T{\mathbb{T}}
\def\zero{{\bf 0}}
\def\P{\mathbb{P}}
\def\F{\mathcal{F}}
\def\E{\mathbb{E}}

 \DeclareMathOperator{\var}{Var}

\begin{document}

\title{Delocalization of two-dimensional random surfaces with hard-core constraints}

\author{Piotr Mi\l{}o\'{s}}
\thanks{Research of P.M. was partially supported by the Polish Ministry of Science and Higher Education
Iuventus Plus Grant no. IP 2011 000171.}
\address{Faculty of Mathematics, Informatics, and Mechanics, University of Warsaw, Banacha 2, \text{02-097 Warszawa}, Poland}
\email{pmilos@mimuw.edu.pl}
\urladdr{http://www.mimuw.edu.pl/~pmilos}

\author{Ron Peled}
\thanks{Research of R.P. is partially supported by an ISF grant and an IRG
grant.}
\address{School of Mathematical sciences, Tel-Aviv University, Tel-Aviv 69978, Israel.}
\email{peledron@post.tau.ac.il}
\urladdr{http://www.math.tau.ac.il/~peledron}

\begin{abstract}
We study the fluctuations of random surfaces on a two-dimensional
discrete torus. The random surfaces we consider are defined via a
nearest-neighbor pair potential which we require to be twice
continuously differentiable on a (possibly infinite) interval and infinity outside of
this interval. No convexity assumption is made and we include the
case of the so-called hammock potential, when the random surface is
uniformly chosen from the set of all surfaces satisfying a Lipschitz
constraint. Our main result is that these surfaces delocalize,
having fluctuations whose variance is at least of order $\log n$,
where $n$ is the side length of the torus. We also show that the
expected maximum of such surfaces is of order at least $\log n$. The
main tool in our analysis is an adaptation to the lattice setting of
an algorithm of Richthammer, who developed a variant of a
Mermin-Wagner-type argument applicable to hard-core constraints. We rely
also on the reflection positivity of the random surface model. The
result answers a question mentioned by Brascamp, Lieb and Lebowitz
\cite{Brascamp:1975vn} on the hammock potential and a question of
Velenik \cite{Velenik:2006kx}.
\end{abstract}

\maketitle

\section{Introduction}\label{sec:introduction}
In this paper we study the fluctuations of random surface models in
two dimensions. We consider the following family of models. Denote
by $\T_{n}^2$ the two-dimensional discrete torus in which the vertex
set is $\{-n+1,-n+2,\ldots,n-1,n\}^{2}$ and $(a,b)$ is adjacent to
$(c,d)$ if $(a,b)$ and $(c,d)$ are equal in one coordinate and
differ by exactly one modulo $2n$ in the other coordinate. Let $U$
be a potential, i.e, a measurable function $U:\R\to(-\infty,\infty]$
satisfying $U(x)=U(-x)$. The random surface model with potential
$U$, normalized at the vertex $\zero:=(0,0)$, is the probability
measure $\mu_{\T_n^2,\zero, U}$ on functions
$\varphi:V(\T_n^2)\to\R$ defined by
\begin{equation}\label{eq:mu_T_n_2_U_measure_def}
  d\mu_{\T_n^2, \zero, U}(\varphi) := \frac{1}{Z_{\T_n^2,\zero, U}} \exp\Bigg(-\sum_{(v,w)\in E(\T_n^2)}
  U(\varphi_v - \varphi_w)\Bigg) \delta_0(d\varphi_{\zero})\prod_{v\in V(\T_n^2)\setminus\{\zero\}}
  d\varphi_v,
\end{equation}
where the vertices and edges of $\T_n^2$ are denoted by $V(\T_n^2)$
and $E(\T_n^2)$ respectively, $d\varphi_v$ denotes Lebesgue measure
on $\varphi_v$, $\delta_0$ is a Dirac delta measure at $0$ and
$Z_{\T_n^2,\zero, U}$ is a normalization constant. For this
definition to make sense the potential $U$ needs to satisfy
additional requirements. It suffices, for instance (see
Lemma~\ref{lem:measure_well_defined} for additional details), that
\begin{equation}\label{eq:U_integral_cond}
\inf_x U(x) > -\infty\quad\text{and}\quad 0<\int
\exp(-U(x))dx<\infty.
\end{equation}

Suppose $\varphi$ is sampled from the measure $\mu_{\T_n^2, \zero,
U}$. The expectation of $\varphi$ is zero at all vertices by
symmetry. How large are the fluctuations of $\varphi$ around zero?
Let us focus on the variance of $\varphi$ at the vertex $(n,n)$. It is expected that this variance is of order $\log n$ under mild
conditions on $U$. This has been shown when the potential $U$ is
twice continuously differentiable with $U''$ bounded away from zero
and infinity, and certain extensions of this class, as discussed in
the survey paper \cite[Remarks 6 and 7]{Velenik:2006kx}.
Specifically, a lower bound of order $\log n$ has been established by Brascamp, Lieb and Lebowitz \cite{Brascamp:1975vn} when $U$ is twice continuously differentiable,
\[
    \int \exp(-\alpha U(x)) dx <+\infty, \forall {\alpha>0},\quad \lim_{|x|\to\infty} (|x| + |U'(x)|)\exp(-U(x)) = 0,
\]
and either of the following holds:
\begin{enumerate}
    \item $\sup_x U''(x)<\infty$ or
    \item $\sup_x |U'(x)|<\infty$ or
    \item $U$ is convex and $\int U'(x)^2\exp(-U(x))dx < \infty$.
\end{enumerate}
The class of potentials covered by their result can be further extended by taking suitable limits, as indicated in \cite{Brascamp:1975vn}. In addition, using arguments of Ioffe, Shlosman and Velenik \cite{Ioffe:2002fk} it is possible to derive qualitatively correct lower bounds for the variance for a class of, possibly discontinuous, potentials  satisfying
\[
  \|U - \tilde{U}\|_\infty < \eps
\]
for a small enough $\eps>0$  and some twice continuously differentiable $\tilde{U}$ satisfying $\sup_x \tilde{U}''(x) < \infty$.

The case of the \emph{hammock potential}, when $U(x) = 0$ for
$|x|\le 1$ and $U(x) = \infty$ for $|x|>1$, is explicitly mentioned
as open in \cite{Brascamp:1975vn} and \cite[Open problem
2]{Velenik:2006kx}. In this paper we prove a lower bound of order
$\log n$ on the variance for a wide class of potentials which
includes the hammock potential. A sample from the random surface
measure with the hammock potential is depicted in
Figure~\ref{2d_3d_100_cont_fig}, both in 2 and 3 dimensions.
\begin{figure}[t!]
\centering {\includegraphics[width=\textwidth, viewport=25 130 990 640, clip]{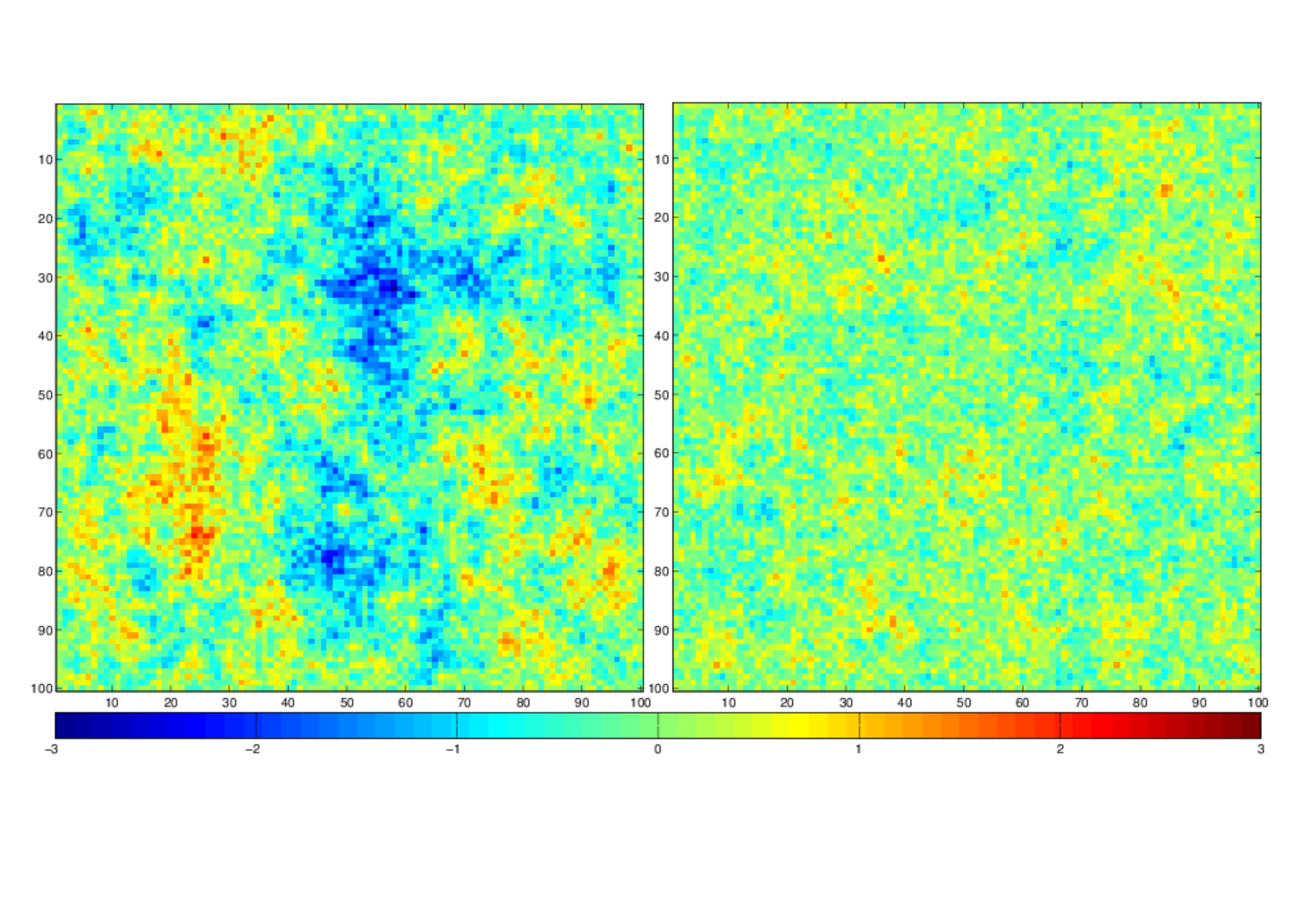}} {\it
\caption{Samples of the random surface measure with the hammock
potential, i.e., samples of a uniformly chosen Lipschitz function
taking real values which differ by at most one between adjacent
vertices. The left picture shows a sample on the 100 x 100 square
and the right picture shows the middle slice (at height 50) of a
sample on the $100\times100\times100$ cube, both conditioned to have
all boundary values in the $[-\frac{1}{2},\frac{1}{2}]$ interval.
Sampled using coupling from the past
\cite{ProppWilson96}.\label{2d_3d_100_cont_fig}} }
\end{figure}

We say that $U\in C^2(I)$ for an interval $I\subseteq\R$ if $U$ is
twice continuously differentiable on $I$. We consider the class of
potentials $U$ satisfying the following condition:
\begin{equation}\label{eq:potential_condition}
  \text{Either $U\in C^2(\R)$ or $U\in C^2((-K,K))$ for some $0<K<\infty$ and $U(x)=\infty$ when $|x|>K$}.
\end{equation}
This class includes the hammock potential as well as ``double well''
potentials, oscillating potentials with finite support (that is,
infinity outside of a bounded interval) and all smooth examples. In
the case that $U\in C^2((-K,K))$ we allow the possibility of a
discontinuity at the endpoints $-K$ and $K$. The following theorem
is the main result of this paper. Besides proving a lower bound on
the variance at the vertex $(n,n)$ we obtain estimates also for
other vertices, for small ball and large deviation probabilities and
for the maximum of the random surface.
\begin{thm}\label{thm:main}
Let $U:\R\to(-\infty,\infty]$ satisfy $U(x)=U(-x)$ and
conditions~\eqref{eq:U_integral_cond} and
\eqref{eq:potential_condition}. Let $n\ge 2$ and let $\varphi$ be
randomly sampled from $\mu_{\T_n^2, \zero, U}$. There exist
constants $C(U),c(U)>0$, depending only on $U$, such that for any
$v\in V(\T_n^2)$ with $\|v\|_1\ge (\log n)^2$ we have
\begin{alignat*}{2}
  &\var(\varphi_v) \ge c(U) \log(1+\|v\|_1),\\
  &\P(|\varphi_v|\le \delta\sqrt{\log(1+\|v\|_1)})\le C(U)\delta^{2/3},\quad&&\delta\ge \frac{1}{\sqrt{\log(1+\|v\|_1)}},\\
  &\P(|\varphi_v| \ge c(U)t\sqrt{\log(1+\|v\|_1)})\ge
  c(U)e^{-C(u)t^2},\quad&& 1\le t\le \frac{1+\sqrt{\|v\|_1}}{1+\log n}.
\end{alignat*}
In addition,
\begin{equation*}
  \P\left(\max_{v\in V(\T_n^2)} |\varphi_v|\ge c(U)\log n\right) \ge \frac{1}{2}.
\end{equation*}
\end{thm}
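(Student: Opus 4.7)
The plan is to adapt the Mermin--Wagner argument of Richthammer to the two-dimensional lattice, producing for each vertex $v\in V(\T_n^2)$ and each amplitude $a\in\R$ a measurable map $T_{v,a}$ on the configuration space that (i) effectively shifts the height at $v$ by $a$, (ii) preserves the hard-core constraint $|\varphi_x-\varphi_y|<K$ on every edge, and (iii) has Radon--Nikodym derivative against $\mu_{\T_n^2,\zero,U}$ whose logarithm is controlled by a discrete Dirichlet energy. Reflection positivity enters only in the maximum estimate, supplying a matching Gaussian-type upper bound on the two-point function.

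I would build a lattice bump $h_{v,a}:V(\T_n^2)\to\R$ with $h_{v,a}(v)=a$, $h_{v,a}(\zero)=0$, and squared Dirichlet energy of order $a^2/\log(1+\|v\|_1)$, using a truncated Green's function of the simple random walk on $\T_n^2$ as the template. In the smooth case $U\in C^2(\R)$ one sets $T_{v,a}(\varphi):=\varphi+h_{v,a}$. In the hard-core case, following Richthammer, one precomposes with a compression map $C_{v,a}$ that contracts the gradients of $\varphi$ on the support of $h_{v,a}$ by a factor $1-\eps(a,v)$, chosen so that $C_{v,a}(\varphi)+h_{v,a}$ still has every gradient strictly inside $(-K,K)$; then $T_{v,a}(\varphi):=C_{v,a}(\varphi)+h_{v,a}$. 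A change-of-variables computation, using $C^2$-smoothness of $U$ inside its support, would yield the core estimate
\[
    \int\log\frac{d\mu_{\T_n^2,\zero,U}}{d(T_{v,a})_*\mu_{\T_n^2,\zero,U}}\,d\mu_{\T_n^2,\zero,U}
    \ \le\ \frac{C(U)\,a^2}{\log(1+\|v\|_1)}.
\]
The first three items of the theorem then follow by varying $a$. The variance lower bound comes from $a=\sqrt{\log(1+\|v\|_1)}$ and Pinsker, which forces the law of $\varphi_v$ to overlap nontrivially with its $a$-translate. The anti-concentration exponent $2/3$ emerges by optimizing the amplitude against an assumed uniform density bound on the interval of half-width $\delta\sqrt{\log(1+\|v\|_1)}$; the choice $a=\delta^{2/3}\sqrt{\log(1+\|v\|_1)}$ balances the two contributions. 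The Gaussian-type large-deviation lower tail $c(U)e^{-C(U)t^2}$ is a dual Pinsker statement with relative entropy of order $t^2$, and the upper constraint $t\le (1+\sqrt{\|v\|_1})/(1+\log n)$ reflects the largest regime in which the compression $\eps(a,v)$ and the bump remain jointly admissible.

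For the maximum, I would fix a vertex $v^*$ with $\|v^*\|_1$ of order $n$ and apply the large-deviation bound at $t=\theta\sqrt{\log n}$ for a small constant $\theta=\theta(U)>0$ (still well below the allowed $\sqrt{n}/\log n$), obtaining
\[
    \P\bigl(|\varphi_{v^*}|\ge c(U)\theta\log n\bigr)\ge c(U)\,n^{-C(U)\theta^2}.
\]
Summing over a set of $\Theta(n^2)$ such vertices, the expected number $N$ of ``tall'' vertices is of order $n^{2-C(U)\theta^2}$. Reflection positivity, via the standard infrared bound, yields the complementary covariance estimate that controls the two-point event probability and hence $\E[N^2]$; choosing $\theta$ so that $C(U)\theta^2\ll 2$, the Paley--Zygmund inequality gives $\P(N\ge 1)\ge \tfrac{1}{2}$.

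The main obstacle will be the lattice implementation of the compression $C_{v,a}$ together with its change-of-variables formula. In the continuum Richthammer's compression is a smooth diffeomorphism; on $\Zd$ one must design a measurable discrete analogue that respects the gradient constraint and whose Jacobian contributes no more than the Dirichlet energy of $h_{v,a}$ to the relative entropy. The compression factor $\eps(a,v)$ must be carefully balanced against $a$, and the possibility that $U$ is discontinuous at $\pm K$ forces the compression to keep every gradient bounded away from the cliff so that only the $C^2$-smooth interior of $U$ is sampled by the comparison.
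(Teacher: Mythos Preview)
Your proposal has the right instinct (Richthammer-style maps) but contains two genuine gaps.

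\textbf{The compression map and the role of reflection positivity.} You describe $C_{v,a}$ as a map that ``contracts the gradients of $\varphi$ on the support of $h_{v,a}$ by a factor $1-\eps(a,v)$.'' A uniform contraction of this kind has log-Jacobian of order $\eps(a,v)$ times the number of vertices in the support, which is of order $\|v\|_1^2$. To accommodate a bump whose gradient near the origin is of order $a/\sqrt{\log(1+\|v\|_1)}$ you would need $\eps(a,v)$ at least that large, and the resulting Jacobian cost would swamp the Dirichlet energy $a^2/\log(1+\|v\|_1)$ by many orders of magnitude. Richthammer's actual mechanism, and the one the paper implements, is \emph{not} a uniform contraction: it is a configuration-dependent shift $T^+$ that adds the full $\tau(v)$ at every vertex except near edges whose gradient is already within $\eps$ of the hard-core boundary (the ``extremal'' edges), where it adds less. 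The Jacobian cost is then controlled by the geometry of the random set $\EC(\varphi)$ of extremal edges, and one needs to know that connected components of $\EC(\varphi)$ are small with high probability. This is exactly where reflection positivity enters, via the chessboard estimate, and it is needed for \emph{every} conclusion of the theorem, not only for the maximum. Your sentence ``Reflection positivity enters only in the maximum estimate'' is therefore incorrect, and without the extremal-edge control the Jacobian bound you state cannot be obtained.

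\textbf{The maximum via second moments.} Your Paley--Zygmund argument requires an upper bound on $\E[N^2]$, which you attribute to ``the standard infrared bound.'' But the infrared bound is not available for the class of potentials in the theorem: it is proved for reflection-positive measures with potentials of a specific convex/quadratic type, and certainly not for the hammock potential or general non-convex $U$ satisfying~\eqref{eq:potential_condition}. The paper avoids any two-point upper bound entirely: it plants $n$ bumps $\tau_1,\ldots,\tau_n$ with \emph{disjoint supports}, so that the event $A_i=\{\text{none of }u_1,\ldots,u_{i-1}\text{ is tall}\}$ is measurable in the zero set of $\tau_i$, and then applies the shift lemma conditionally on $A_i$. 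Summing the resulting telescoping inequalities yields $\P(\cup B_i)\ge 1/2$ directly, with no second-moment input.
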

We remark that condition \eqref{eq:U_integral_cond} is mainly
required in this theorem for the probability
measure~\eqref{eq:mu_T_n_2_U_measure_def} to make sense. One may
replace it by other conditions of a similar nature. Additional
remarks may be found following
Theorem~\ref{thm:variance_lower_bound} below.

Our results can be viewed in a broader context of Mermin-Wagner-type arguments. Such arguments show, roughly, that continuous translational symmetry cannot be broken in one- or two-dimensional systems. For lattice models with compact spin spaces this implies that spins are uniformly distributed in the infinite volume limit. For lattice models with non-compact spin spaces, such as the random surface models we consider, such arguments prove delocalization and consequently non-existence of infinite volume Gibbs measures. We present now a non-exhaustive list of papers studying these phenomena. Such arguments were pioneered by Mermin and Wagner \cite{Mermin:1966fr} who worked in a quantum context and relied on the, so called, Bogoliubov inequalities. These techniques were later extended and transferred to a classical context - see e.g. Hohenberg \cite{Hohenberg:1967zr} and Brascamp, Lieb and Lebowitz \cite{ Brascamp:1975vn}. New techniques were developed by Dobrushin and Shlosman \cite{Dobrushin:1975ly, Dobrushin:1980ve}, McBryan and Spencer \cite{McBryan:1977fk} and Fr\"ohlich and Pfister \cite{Pfister:1981qy, Frohlich:1981qf}. The methods in all of the above papers require the potential to satisfy certain smoothness assumptions. Ioffe, Shlosman and Velenik \cite{Ioffe:2002fk} and Gagnebin and Velenik \cite{Gagnebin:2013lr} presented extensions to some classes of non-smooth potentials. These works left open the case of potentials taking infinite values and a solution to this problem came from Richthammer \cite{Richthammer:2007fk} who studied Gibbsian point processes in $\R^2$. Our approach follows closely his elaborate technique introduced for proving that all Gibbs states of such point process models are translation invariant, even in the presence of \emph{hard-core} constraints as in the hard sphere model. The main ingredient in Richthammer's approach is an algorithm designed for perturbing a given configuration in a prescribed manner while preserving the hard-core constraints. Our proof adapts this algorithm from the continuum to the graph setting and from the point process to the random surface context. The resulting adaptation is presented in some detail in Section~\ref{sec:Shift-Algorithm-and} and we hope that it will be useful in other contexts as well.

\subsection{Overview of the proof}

In order to illustrate our proof we first explain how to establish a
lower bound on fluctuations in the simpler case that the potential
$U$ satisfies that
\begin{equation}\label{eq:upper_bound_second_deriv_assumption}
\text{$U$ is twice continuously differentiable on $\R$ and }\sup_x
U''(x)<\infty,
\end{equation}
in addition to the condition \eqref{eq:U_integral_cond}. The methods of this section are similar to the one of \cite{Pfister:1981qy}. We then
provide details on the modification of this method, following the
approach of Richthammer, which we use for potentials satisfying
condition~\eqref{eq:potential_condition}.
\subsubsection{Delocalization argument for potentials with bounded
second derivative}\label{sec:Vanilla_Mermin_Wagner} Suppose the
potential $U$ satisfies
\eqref{eq:upper_bound_second_deriv_assumption} and the
condition~\eqref{eq:U_integral_cond}. Let $n\ge 1$ and let $\varphi$
be randomly sampled from $\mu_{\T_n^2, \zero, U}$. We will show that
\begin{equation}\label{eq:required_variance_bounds}
  \var(\varphi_{(n,n)})\ge c(U) \log n
\end{equation}
for some $c(U)>0$.

Write
\begin{equation*}
  g(\psi):=\frac{1}{Z_{\T_n^2, \zero, U}} \exp\Bigg(-\sum_{(v,w)\in E(\T_n^2)}U(\psi_v - \psi_w)\Bigg)
\end{equation*}
for the density of the measure $\mu_{\T_n^2, \zero, U}$ at the
configuration $\psi$. Define a function
$\tau:V(\T_n^2)\to[0,\infty)$ by
\begin{equation}\label{eq:tau_def_intro}
  \tau(v):=\frac{\log(1 + \|v\|_1)}{\sqrt{\log(2n+1)}},
\end{equation}
where $\|(v_1,v_2)\|_1:=|v_1|+|v_2|$. It is not difficult to check
that
\begin{equation}\label{eq:squared_gradients_tau}
  \sum_{(v,w)\in E(\T_n^2)} |\tau(v) - \tau(w)|^2 \le C
\end{equation}
for some absolute constant $C>0$. For a configuration
$\psi:V(\T_n^2)\to\R$ define the two shifted configurations
\begin{equation}\label{eq:t_plus_minus_def}
  \psi^+ := \psi + \tau,\quad \psi^- := \psi - \tau.
\end{equation}
A Taylor expansion of $U$, the assumption
\eqref{eq:upper_bound_second_deriv_assumption} and
\eqref{eq:squared_gradients_tau} imply that
\begin{align}\label{eq:geometric_average_densities}
  &\sqrt{g(\psi^+) g(\psi^-)}=\nonumber\\
  &= \frac{1}{Z_{\T_n^2, \zero, U}} \exp\bigg(-\frac{1}{2}\sum_{(v,w)\in E(\T_n^2)} U(\psi_v - \psi_w + \tau(v) - \tau(w)) + U(\psi_v - \psi_w - \tau(v) + \tau(w))\bigg) \ge\nonumber\\
  &\ge \frac{1}{Z_{\T_n^2, \zero, U}} \exp\bigg(-\sum_{(v,w)\in E(\T_n^2)} U(\psi_v - \psi_w) - \sup_x U''(x) \sum_{(v,w)\in E(\T_n^2)} (\tau(v) - \tau(w))^2\bigg) \ge c(U) g(\psi),
\end{align}
for some $c(U)>0$.

 We wish to convert the inequality
\eqref{eq:geometric_average_densities} into an inequality of
probabilities rather than densities. To this end define
\begin{align*}
  &E_a:=\{\psi:V(\T_n^2)\to\R\colon|\psi_{(n,n)}|\le a\},\quad a>0,\\
  &d\lambda(\psi):=\delta_0(d\psi_{\zero})\prod_{v\in
  V(\T_n^2)\setminus\{\zero\}}d\psi_v.
\end{align*}
Let $a>0$ and define
\begin{equation*}
  I:=\int_{E_a} \sqrt{g(\psi^+)g(\psi^-)}d\lambda(\psi).
\end{equation*}
On the one hand, by~\eqref{eq:geometric_average_densities},
\begin{equation}\label{eq:prob_estimate_1}
  I \ge c(U)\int_{E_a} g(\psi)d\lambda(\psi) = c(U)\P(|\varphi_{(n,n)}|\le a).
\end{equation}
On the other hand, the Cauchy-Schwartz inequality and a change of
variables using \eqref{eq:t_plus_minus_def} and the fact that
$\tau(\zero)=0$ yields
\begin{align}\label{eq:prob_estimate_2}
  I\le \left(\int_{E_a} g(\psi^+) d\lambda(\psi) \int_{E_a} g(\psi^-) d\lambda(\psi)\right)^{\frac{1}{2}} = \left(\P(|\psi_{(n,n)} - \tau((n,n))|\le a)\P(|\psi_{(n,n)} + \tau((n,n))|\le
  a)\right)^{\frac{1}{2}}.
\end{align}
Putting together \eqref{eq:prob_estimate_1} and
\eqref{eq:prob_estimate_2} and recalling \eqref{eq:tau_def_intro} we
obtain
\begin{align}\label{eq:prob_estimate_final}
  \left(\P(|\varphi_{(n,n)} - \sqrt{\log(2n+1)}|\le a)\P(|\varphi_{(n,n)} + \sqrt{\log(2n+1)}|\le
  a)\right)^{\frac{1}{2}}\ge c(U)\P(|\varphi_{(n,n)}|\le a).
\end{align}
Using the symmetry of the distribution of $\varphi$, the
arithmetic-geometric mean inequality and taking
$a:=\frac{1}{3}\sqrt{\log(2n+1)}$ in the last inequality we conclude
\begin{align*}
  &\frac{1}{2}\P\left(|\varphi_{(n,n)}|\ge\frac{2}{3}\sqrt{\log(2n+1)}\right)\\&=\frac{1}{2}\left[\P\left(\varphi_{(n,n)}\ge\frac{2}{3}\sqrt{\log(2n+1)}\right)
  + \P\left(\varphi_{(n,n)}\le-\frac{2}{3}\sqrt{\log(2n+1)}\right)\right] \ge\\
  &\ge\left[\P\left(\varphi_{(n,n)}\ge\frac{2}{3}\sqrt{\log(2n+1)}\right)\P\left(\varphi_{(n,n)}\le-\frac{2}{3}\sqrt{\log(2n+1)}\right)\right]^{1/2}\ge\\
  &\ge\left[\P\left(|\varphi_{(n,n)}-\sqrt{\log(2n+1)}|\le\frac{1}{3}\sqrt{\log(2n+1)}\right)\right.\\&\quad\left.\P\left(|\varphi_{(n,n)}+\sqrt{\log(2n+1)}|\le\frac{1}{3}\sqrt{\log(2n+1)}\right)\right]^{1/2}\ge\\
  &\ge c(U)\P\left(|\varphi_{(n,n)}|\le \frac{1}{3}\sqrt{\log(2n+1)}\right),
\end{align*}
from which we conclude $\E \varphi_{(n,n)}^2 \ge c'(U)\log n$ for
some $c'(U)>0$. The inequality \eqref{eq:required_variance_bounds}
follows as $\E \varphi_{(n,n)} = 0$ by symmetry.

\subsubsection{Modification of the argument for potentials
satisfying \eqref{eq:potential_condition}} For simplicity, assume
the potential $U$ satisfies $U\in C^2([-1,1])$ and $U(x)=\infty$
when $|x|>1$, as more general potentials satisfying
\eqref{eq:potential_condition} may be treated by similar arguments.
Let us say that a configuration $\psi:V(\T_n^2)\to\R$ is
\emph{Lipschitz} if $|\psi_v-\psi_w|\le 1$ whenever $(v,w)\in
E(\T_n^2)$. The measure $\mu_{\T_n^2,\zero,U}$ is supported on
Lipschitz configurations (satisfying $\psi(\zero)=0$) under our
assumption on $U$. The fundamental difficulty in applying the
previous argument to this case is that it may happen that although
$\psi$ is a Lipschitz configuration, one of the configurations
$\psi^+$ or $\psi^-$ defined by \eqref{eq:t_plus_minus_def} may fail
to be, in which case the
inequality~\eqref{eq:geometric_average_densities} will not be
satisfied. The solution we use for this problem is to replace the
configurations $\psi^+$ and $\psi^-$ in the previous argument by
$T^+(\psi)$ and $T^-(\psi)$, where
$T^+,T^-:\R^{V(\T_n^2)}\to\R^{V(\T_n^2)}$ are certain mappings,
termed addition algorithms in our paper, which share many of the
properties of the operations of adding and subtracting $\tau$ while
preserving the class of Lipschitz configurations. The definitions and properties of $T^+$ and $T^-$ are adapted from
the work of Richthammer \cite{Richthammer:2007fk} who showed that
all Gibbs states of point process models in $\R^2$ with hard-core
constraints, such as the hard sphere model, are translation
invariant. Our adaptation translates Richthammer's notions from the
continuum to the graph setting and from the point process to the
random surface context. The main properties of $T^+$ and $T^-$ are
detailed in Section~\ref{sec:addition_algorithm_prop}. We highlight
the possibility of defining these mappings for general graphs and
general addition functions $\tau$ as we believe these extensions to
be useful in other contexts and as they are captured with the same
definitions and proofs.

The mappings $T^+$ and $T^-$ are defined to satisfy
$T^-(\psi):=2\psi - T^+(\psi)$, just as in the definitions of
$\psi^+$ and $\psi^-$ in \eqref{eq:t_plus_minus_def}. It thus
suffices to define $T^+(\psi)$. Let us remark briefly on this
definition for a Lipschitz configuration $\psi$. Roughly speaking, a
certain $\psi$-dependent ordering on the vertices of the graph is
chosen. Then, for each vertex $v$ in this order, an amount between
$0$ and $\tau(v)$ is added to $\psi_v$ in such a way that the
Lipschitz property is maintained with respect to the previously
treated vertices in the chosen order. The amount added at vertex $v$
is chosen to vary continuously with the value $\psi_v$, in such a
way that the resulting operation is invertible.

Two difficulties arise when replacing $\psi^+$ and $\psi^-$ by
$T^+(\psi)$ and $T^-(\psi)$ in the argument of
Section~\ref{sec:Vanilla_Mermin_Wagner}. First, the change of
variables used in inequality~\eqref{eq:prob_estimate_2} relied on
the fact that the mappings $\psi\mapsto\psi+\tau$ and
$\psi\mapsto\psi-\tau$ preserve Lebesgue measure. When making a
change of variables from $T^+(\psi)$ and $T^-(\psi)$ to $\psi$ a
Jacobian factor enters, which needs to be estimated. Second, the
argument uses the fact that $\psi^+_{(n,n)}$ and $\psi^-_{(n,n)}$
differ significantly from $\psi_{(n,n)}$, by the amount
$\sqrt{\log(2n+1)}$. Thus we also need to show that the difference
of $T^+(\psi)_{(n,n)}$ and $T^-(\psi)_{(n,n)}$ from $\psi_{(n,n)}$
is close to $\sqrt{\log(2n+1)}$, at least for most configurations
$\psi$. It turns out that both these difficulties may be overcome if
we can control the following percolation-like process. We say an
edge $e=(v,w)\in E(\T_n^2)$ has \emph{extremal slope} for the
configuration $\psi$ if $|\psi_v - \psi_w|\ge 1-\eps$, for some
small $\eps>0$ fixed in advance. Sampling $\varphi$ randomly from
the measure $\mu_{\T_n^2, \zero, U}$, we denote by $\EC(\varphi)$
the random subgraph of $\T_n^2$ consisting of all edges with
extremal slope for $\varphi$. Both difficulties described above may
be overcome by showing that with high probability, the subgraph
$\EC(\varphi)$ is ``subcritical'' in the sense that its connected
components are small. Proving this turns out to be a non-trivial
task, which requires us to make use of reflection positivity
techniques, specifically, the chessboard estimate. We remark that
here (and only here) we rely essentially on the fact that $\T_n^2$
is a torus (i.e., has periodic boundary) and that the measure
$\mu_{\T_n^2, \zero, U}$ is normalized at the single vertex $\zero$.
Analogous estimates were also required in Richthammer's work
\cite{Richthammer:2007fk} but were provided by the underlying
Poisson process structure of the problem considered there, via
so-called Ruelle bounds.

\subsubsection{Reader's guide}
In Section~\ref{sec:Shift-Algorithm-and} we describe the mappings
$T^+$ and $T^-$ mentioned in the previous section. The section
begins by listing the main properties of $T^+$ and $T^-$, continues
with a precise definition of $T^+$ and proceeds to prove that the
required properties of $T^+$ indeed hold with this definition. In
Section~\ref{sec:reflection_positivity} we discuss reflection
positivity for random surface models and prove, via the chessboard
estimate, that the subgraph of edges with extremal slopes mentioned
in the previous section is ``subcritical'' with high probability.
Sections~\ref{sec:Shift-Algorithm-and} and
\ref{sec:reflection_positivity} address disjoint aspects of the
problem and may be read independently. In
Section~\ref{sec:proofs_for_main_theorems} we prove our main
theorem, Theorem~\ref{thm:main}, under alternative assumptions, by modifying the
argument presented in Section~\ref{sec:Vanilla_Mermin_Wagner} to
make use of the mappings $T^+$ and $T^-$ and extending it to provide
information also on small ball and large deviation probabilities and
on the maximum of the random surface. In the short
Section~\ref{sec:main_theorem_proof} we use the results of
Section~\ref{sec:reflection_positivity} to reduce
Theorem~\ref{thm:main} to the case discussed in
Section~\ref{sec:proofs_for_main_theorems}.
Section~\ref{sec:open_prob} contains a discussion of future research
directions and open questions.

\section{\label{sec:Shift-Algorithm-and}The addition algorithm and its properties}
In this section we define the addition algorithm $T^+$ which forms a
core part of our proof. The algorithm is an adaptation to the graph
setting of an algorithm of Richthammer \cite{Richthammer:2007fk} used in a
continuum setting. Our presentation adapts the proofs in \cite{Richthammer:2007fk} but emphasizes the applicability of the algorithm to general
graphs and general addition functions $\tau$.

\subsection{Properties of the addition
algorithm}\label{sec:addition_algorithm_prop} Here we describe the
properties of the addition algorithm which will be used by our
application. The algorithm itself is defined in the next section and
the fact that it satisfies the stated properties is verified in the
subsequent sections.

 Let $G=(V,E)$ be a finite, connected graph. We sometimes write $v\sim w$ to denote that $(v,w)\in E$. Let $\tau:V\to[0,\infty)$ and
$0<\eps\le\frac{1}{2}$ be given. We define a pair of measurable  mappings $T^+, T^-:\R^V\to\R^V$ related by the
equality
\begin{equation}\label{eq:T_+_T_-_relation}
  T^+(\varphi) - \varphi = \varphi -
  T^-(\varphi),\quad\varphi\in\R^V,
\end{equation}
and satisfying the following properties:
\begin{enumerate}
  \item \label{property:one-to-one_onto}$T^+$ and $T^-$ are one-to-one and onto.
  \item \label{property:maximal_increment}For every $\varphi\in\R^V$ and every $v\in V$,
  \begin{equation}\label{eq:T^+_T^-_increment_range}
    0\le T^+(\varphi)_v - \varphi_v = \varphi_v - T^-(\varphi)_v\le \tau(v).
  \end{equation}
  \item \label{property:Lipschitz_preservation} For every $\varphi\in\R^V$ and every $(v,w)\in E$,
  \begin{align*}
    \text{if }&|\varphi_v - \varphi_w|\ge 1\text{ then } T^+(\varphi)_v -
    T^+(\varphi)_w = T^-(\varphi)_v - T^-(\varphi)_w = \varphi_v -
    \varphi_w,\\
    \text{and if }&|\varphi_v - \varphi_w|< 1\text{ then } |T^+(\varphi)_v -
    T^+(\varphi)_w| < 1\;\;\text{ and }\;\; |T^-(\varphi)_v -
    T^-(\varphi)_w| < 1.
  \end{align*}
\end{enumerate}
The properties stated so far do not exclude the possibility that
$T^+$ is the identity mapping (implying the same for $T^-$ by
\eqref{eq:T_+_T_-_relation}). The next property shows that
$T^+(\varphi) - \varphi$ is close to $\tau$ under certain
restrictions on the set of edges on which $\varphi$ changes by more
than $1-\eps$. We require a few definitions.

Let $d_G$ stand for the graph distance in $G$.
The next two definitions concern the Lipschitz properties of $\tau$.
\begin{align}
\tau'(v,k)&:=\max\{\tau(v) - \tau(w)\colon w\in V,\, d_G(v,w)\le
k\}\label{eq:tau_prime_def},\\
L(\tau,\eps)&:=\max\left\{k\colon \forall v\in V,\,
\tau'(v,k)\le\frac{\eps}{2}\right\} - 1.\label{eq:L_tau_eps_def}
\end{align}
In the following definitions we consider the connectivity properties
of the subset of edges on which $\varphi$ changes by more than
$1-\eps$. For $\varphi\in\R^V$ define
\begin{equation}\label{eq:EC_def}
  \EC(\varphi):=\{(v,w)\in E\colon |\varphi_v-\varphi_w|\ge
  1-\eps\}
\end{equation}
and write, for a pair of vertices $v,w\in V$,
\begin{equation}\label{eq:EC_connectivity_def}
v\xleftrightarrow{\EC(\varphi)} w\;\text{ if $v$ is connected to $w$
by edges of $\EC(\varphi)$},
\end{equation}
where we mean in particular $v\xleftrightarrow{\EC(\varphi)} v$ for all $v\in V$. Let
\begin{align}\label{def:rAndm}
  r(\varphi, v)&:= \max\{d_G(v,w)\colon w\in V,\,
  v\xleftrightarrow{\EC(\varphi)}w\},\\
  M(\varphi)&:=\max\{d_G(v,w)\colon v,w\in V,\,v\xleftrightarrow{\EC(\varphi)}w\}.\nonumber
\end{align}
\begin{enumerate}
\setcounter{enumi}{3}
  \item\label{property:addition_lower_bound} If $\varphi$ satisfies $M(\varphi)\le L(\tau,\eps)$ then
  \begin{equation*}
    \forall v\in V,\quad T^+(\varphi)_v-\varphi_v = \varphi_v - T^-(\varphi)_v\ge \tau(v) -
    \frac{\eps}{2}.
  \end{equation*}
\end{enumerate}
Together with property~\eqref{property:maximal_increment} above this
shows that $T^+(\varphi) - \varphi$ and $\varphi - T^-(\varphi)$ are
approximately equal to $\tau$ when $M(\varphi)\le L(\tau,\eps)$. A
slightly stronger property is given in
Proposition~\ref{prop:bounding_shifts} below.

Our final property regards the change of measure induced by the
mappings $T^+$ and $T^-$. We bound the Jacobians of these mappings
when the subgraph $\EC(\varphi)$ does not contain many large
connected components.

Partition the vertex set $V$ into $V_0$ and $V_1$ by letting
\begin{equation}
  V_0:=\{v\in V\colon \tau(v) = 0\}\quad\text{and}\quad V_1:=V\setminus
  V_0.\label{eq:V_0_V_1_partition}
\end{equation}
Given a function $\theta:V_0\to\R$ we write
\begin{equation}\label{eq:Lebesgue_with_delta_measure_properties_section}
  d\mu_\theta(\varphi):=\prod_{v\in V_1} d\varphi_v \prod_{v\in V_0}
  \delta_{\theta_v}(d\varphi_v)
\end{equation}
for the measure on $\R^V$ given by product Lebesgue measure on the
subspace where $\varphi_v = \theta_v$, $v\in V_0$.
\begin{enumerate}
\setcounter{enumi}{4}
  \item\label{property:Jacobian_estimate} There are measurable functions $J^+:\R^V\to[0,\infty)$ and
  $J^-:\R^V\to[0,\infty)$ satisfying that for every
  $\theta:V_0\to\R$ and every $g:\R^V\to[0,\infty)$, integrable with respect to $d\mu_\theta$,
  \begin{equation}\label{eq:Jacobian_formula_properties_section}
\int g(T^+(\varphi))J^+(\varphi) d\mu_\theta(\varphi) = \int
g(T^-(\varphi))J^-(\varphi) d\mu_\theta(\varphi) = \int g(\varphi)
d\mu_\theta(\varphi).
\end{equation}
  Moreover, if $\varphi$ satisfies $M(\varphi)\le L(\tau,\eps)$ then
  \begin{equation*}
    \sqrt{J^+(\varphi)J^-(\varphi)}\ge \exp\left(-\frac{1}{\eps^2}\sum_{v\in V} \tau'\left(v,1+\max_{w\sim
v}r(\varphi,w)\right)^2\right).
  \end{equation*}
\end{enumerate}

\subsection{Description of the addition
algorithm}\label{sec:description_of_T^+} In this section we define
the mapping $T^+$ whose properties were discussed in the previous
section.

Let the graph $G=(V,E)$, function $\tau$ and constant $\eps$ be as
above. Fix an arbitrary total order $\preceq$ on the vertex set $V$. Define a Lipschitz ``bump'' function $f:\R\to\R$ by
\begin{equation}\label{eq:bump_function_def}
  f(x):=\begin{cases}
    0 & x\in(-\infty,-1]\\
    \frac{1+x}{\eps} & x\in[-1,-1+\eps]\\
    1 & x\in[-1+\eps,1-\eps]\\
    \frac{1-x}{\eps} & x\in[1-\eps,1]\\
    0 & x\in[1,\infty)
  \end{cases}.
\end{equation}
We also define a family of shifted and rescaled versions of $f$. For
a vertex $v\in V$ and $h,t\in\R$ let
\begin{equation}\label{eq:m_alternative_def}
  m_{v,h,t}(h'):= \begin{cases}
    \min\left(\tau(v) - t, \frac{\eps}{2}\right)f(h'-h)+t, &\text{ if }\tau(v)\geq t\\
    t, &\text{ if } \tau(v)<t
  \end{cases}.
\end{equation}
One should have in mind the case $\tau(v)\geq t$ and think of $m_{v,h,t}$ as being the same as $f$, scaled and
shifted to have maximum $\tau(v)$, minimum $t$ and to have its
``center'' at $h$. However, if the function just described has
Lipschitz constant more than $1/2$, we lower its maximum so that its
Lipschitz constant becomes $1/2$. For easy reference we record this
as
\begin{equation}\label{eq:m_Lipschitz_constant}
  \text{the function $m_{v,h,t}$ has Lipschitz constant at most
  $\frac{1}{2}$}.
\end{equation}
The case $\tau(v)<t$ is not used in the definition of $T^+$ below.
It is included here as it is technically convenient in the analysis
to have $m_{v,h,t}$ defined for all values of the parameters.

The definition of $T^+$ is based on the following algorithm. The
algorithm takes as input a function $\varphi\in\R^V$. It outputs
three sequences indexed by $1\le k\le |V|$:
\begin{enumerate}
\item A sequence $(P_k)$ which is a ordering of the vertices $V$,
that is, $\{P_k\} = V$.
\item A sequence $(s_k)\subseteq[0,\infty)$ with $s_k$ representing the amount to add to $\varphi$ at vertex
$P_k$.
\item A sequence $(\tau_k)$ of functions, $\tau_k:V\times\R\to\R$, which
will play a role in analyzing the Jacobian of the mapping $T^+$.
\end{enumerate}
The mapping $T^+$ is then defined by
\begin{equation}\label{eq:T_plus_def}
  T^+(\varphi) :=
\tilde{\varphi}\;\;\text{ with }\;\;\tilde{\varphi}_{P_k} :=
\varphi_{P_k} + s_k,\quad 1\le k\le |V|.
\end{equation}
\textbf{Addition algorithm:}\\
\noindent \underline{Initialization}. Set $\tau_1(v,h):=\tau(v)$ for all $v\in V$ and $h\in\R$.\\
\noindent \underline{Loop}. For $k$ between $1$ and $|V|$ do:
\begin{enumerate}
\item \label{alg:P_k_choice} Set $P_k$ to be the vertex $v$ in $V\setminus \{P_1, \ldots,
P_{k-1}\}$ which minimizes $\tau_k(v,\varphi_v)$. If there are
multiple vertices achieving the same minimum let $P_k$ be the
smallest one with respect to the total order $\preceq$.
\item \label{alg:s_k_def} Set $s_k:=\tau_k(P_k,\varphi_{P_k})$.
\item \label{alg:update_step} If $k<|V|$ set, for each $v\in V$ and
$h\in\R$,
\begin{equation}\label{eq:algorithm_update_tau}
\tau_{k+1}(v,h):=\begin{cases} \tau_{k}(v,h) & \text{ if
$v\in\{P_1,\ldots, P_k\}$ or $v\not \sim
P_k$}\\
\min(\tau_{k}(v,h), m_{v,\varphi_{P_{k}},s_k}(h)) & \text{ if
$v\notin\{P_1,\ldots, P_k\}$ and }v\sim P_{k}
\end{cases}.
\end{equation}
\end{enumerate}
 \newcommand{\scT}{0.4}

\begin{adjustwidth}{-1.0cm}{}   

\begin{tabular}{|c|c|c|}    
    \multicolumn{3}{c}{Table 1: An illustration of the action of the addition algorithm on a function defined on a 2x3 grid graph.}\\
\hline 
\begin{sideways}
\parbox{6cm}{
 \underline{Initialization.} The vertices are assigned initial requested shifts, $\tau_1:=\tau$.
\newline\newline 
\underline{Loop } 
\begin{enumerate}
	\item[(1)] The green (gray in b\&w) vertex is set to be $P_1$ and to be processed.
	\item[(2)] It is shifted by $s_1 := 0$.
	\item[(3)] The requested shifts are updated. In this example for all $w$: $\tau_2(w,\varphi_w) := \tau_1(w,\varphi_w)$.
\end{enumerate}

}      $\:$       
\end{sideways} &
\begin{sideways}
\parbox{6cm}{
 	\underline{Loop } 
	\begin{enumerate}
		\item[(1)] The green (gray in b\&w)  vertex is set to be $P_2$ and to be processed.
		\item[(2)] It is shifted by $s_2 := 0.2$.
		\item[(3)] The requested shifts are updated. For the top-most vertex $v$: {\small $\tau_3(v,\varphi_v):=0.6<1.1=\tau_2(v,\varphi_v)$.} For all other $w$:   $\tau_3(w,\varphi_w) := \tau_2(w,\varphi_w)$.
	\end{enumerate}  
	{\small The next vertex to be processed need not be adjacent to the previously processed vertices.}
}   
\end{sideways}
  & 
\begin{sideways}
\parbox{6cm}{
   	\underline{Loop } 
	\begin{enumerate}
		\item[(1)] The green (gray in b\&w)  vertex is set to be $P_3$ and to be processed.
		\item[(2)] It is shifted by $s_3 := 0.3$.
		\item[(3)] The requested shifts are updated. For the top-center vertex $v$: {\small $\tau_4(v,\varphi_v):=0.69<1.2=\tau_3(v,\varphi_v)$.} For all other $w$:   $\tau_4(w,\varphi_w) := \tau_3(w,\varphi_w)$.
	\end{enumerate}
}
\end{sideways}

\tabularnewline
\hline 
\includegraphics[angle=90,scale=\scT]{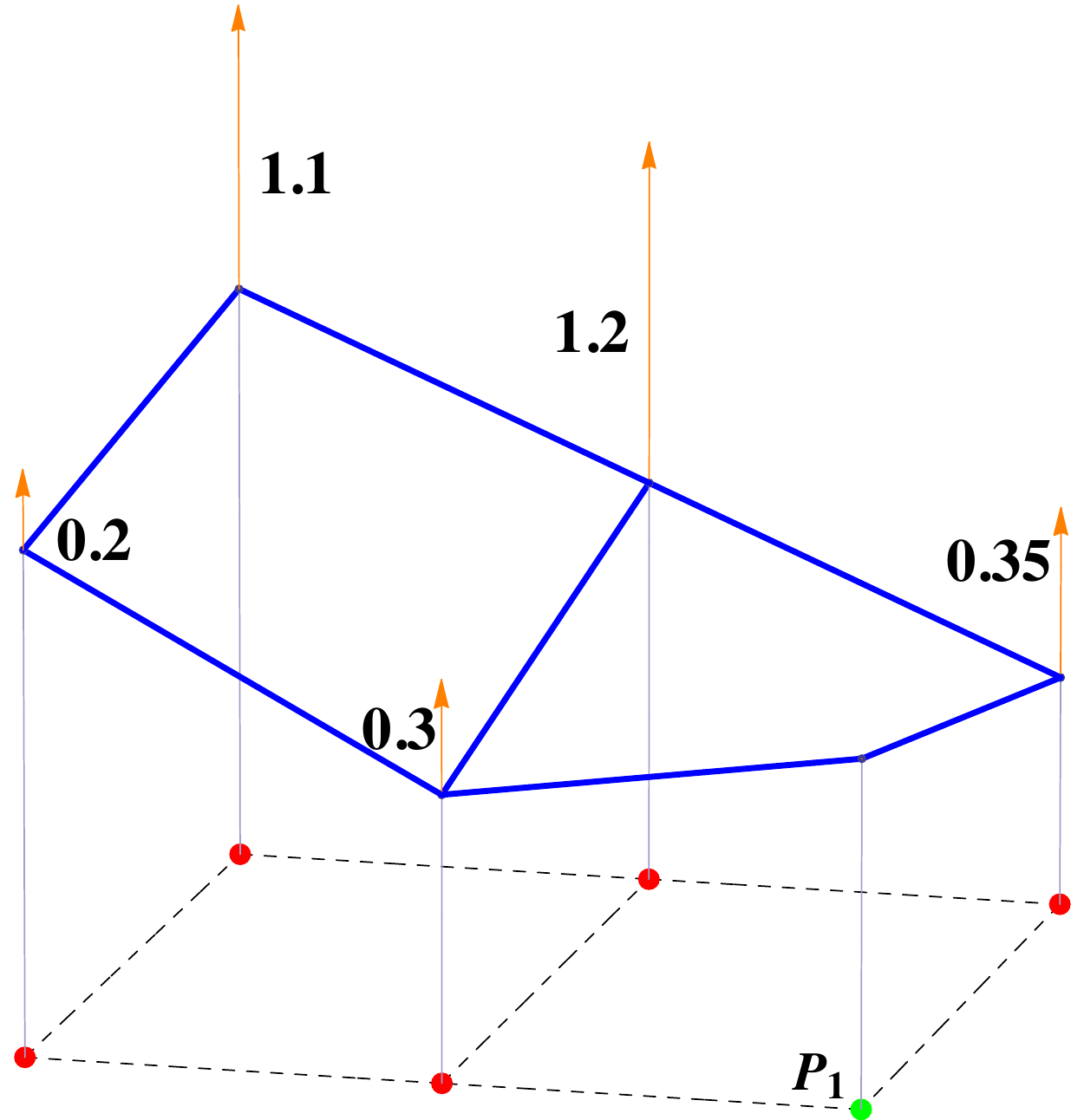} & \includegraphics[angle=90,scale=\scT]{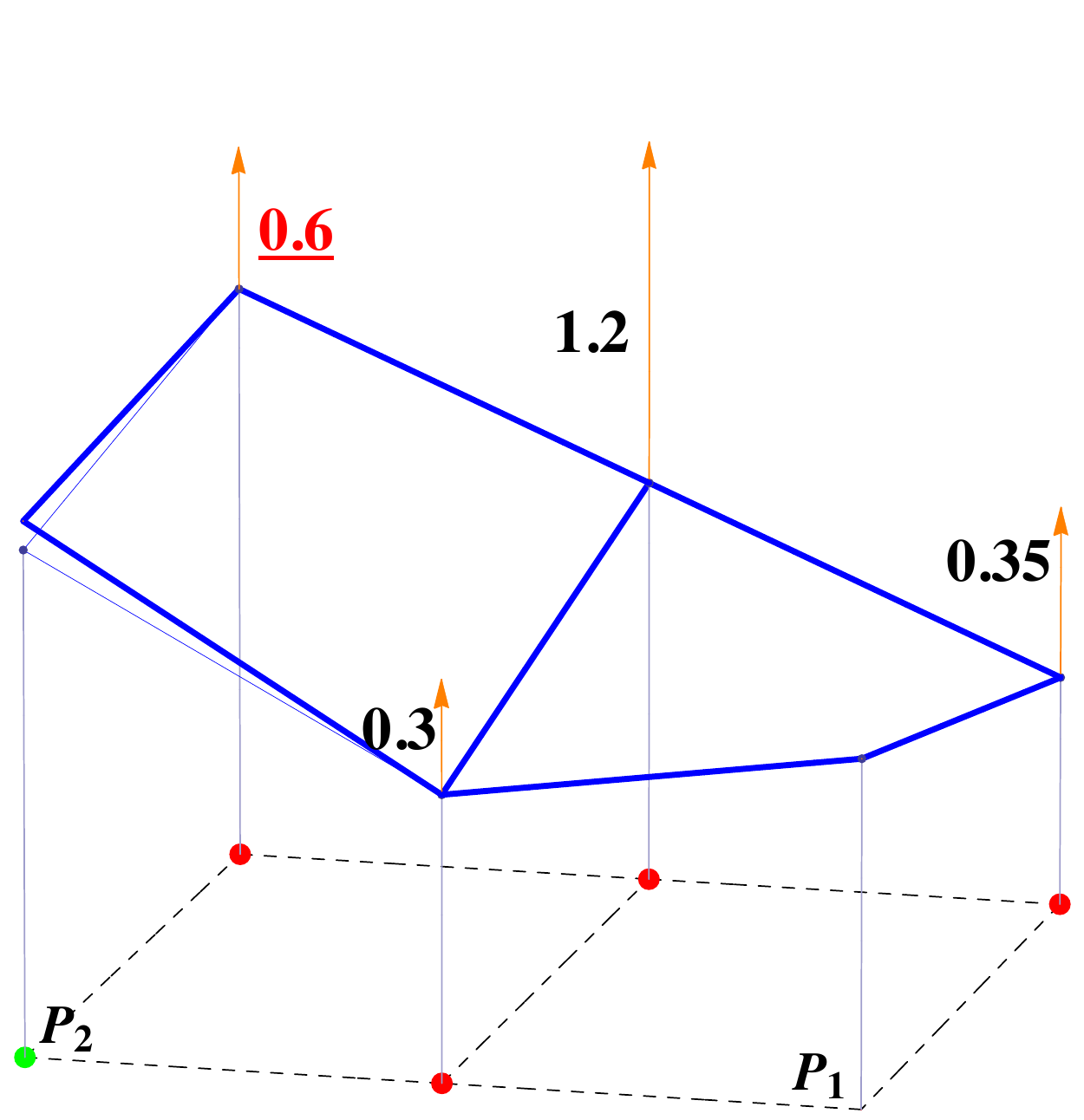} & \includegraphics[angle=90,scale=\scT]{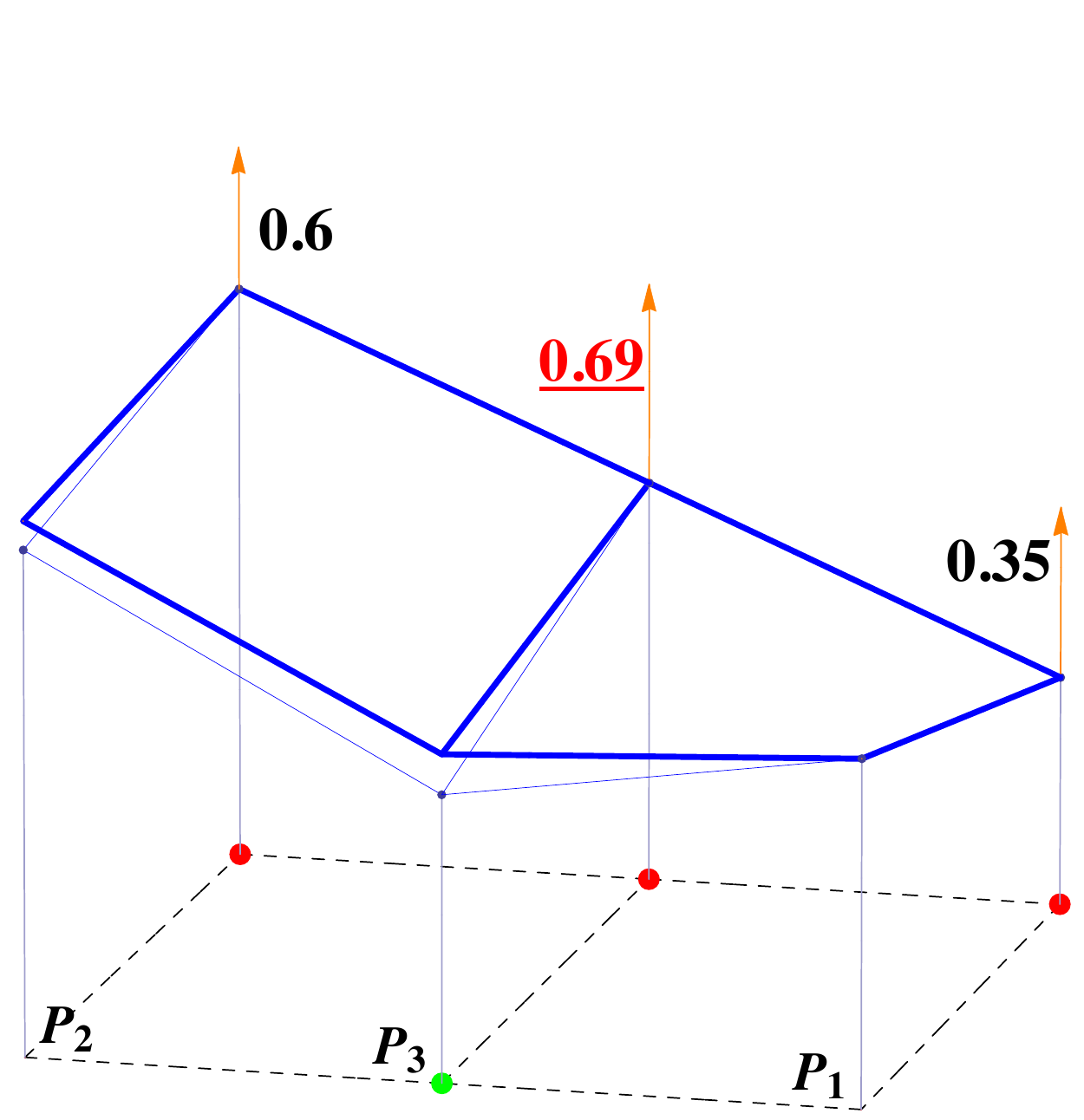}\\
\hline
\includegraphics[angle=90,scale=\scT]{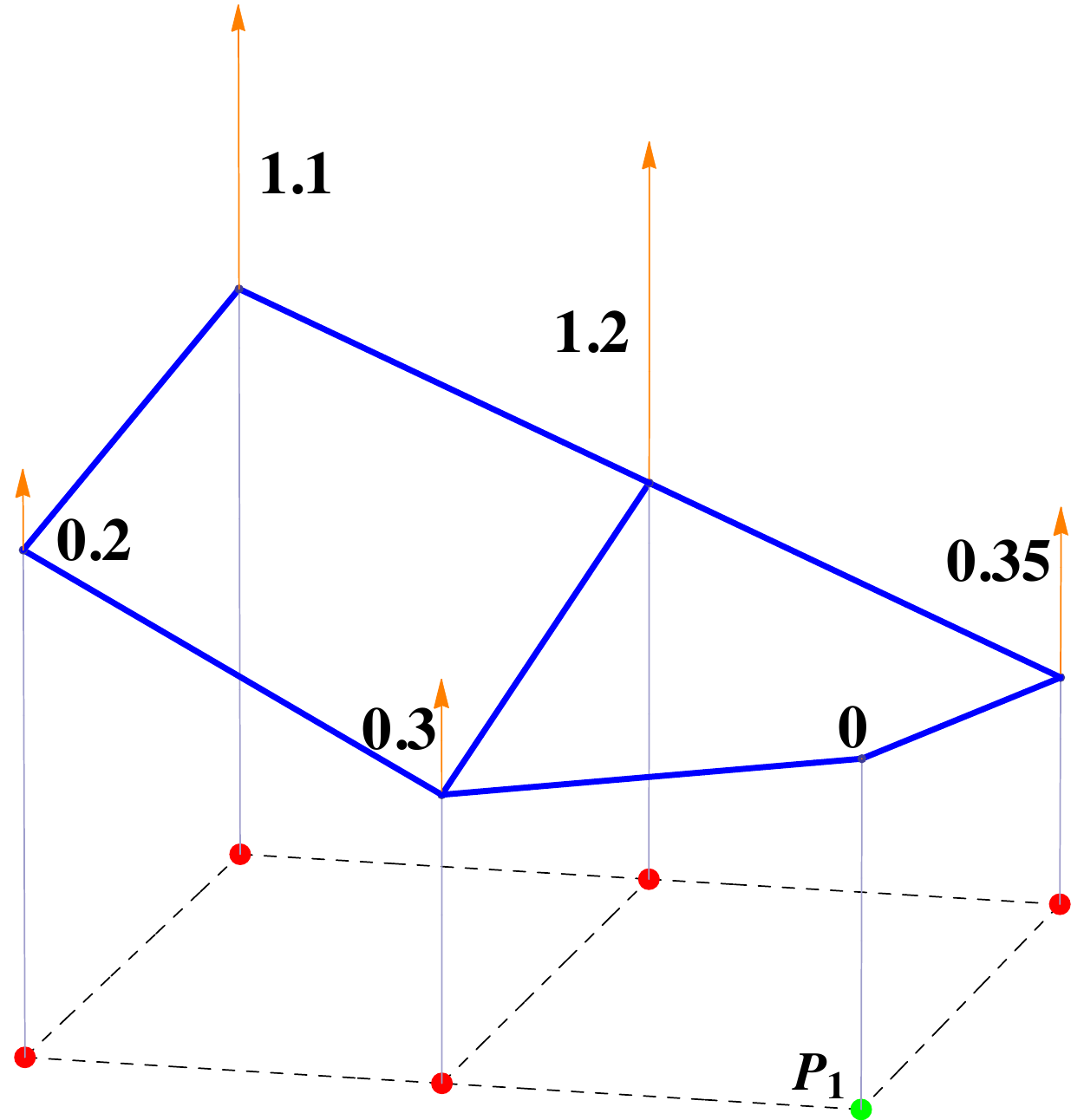} & \includegraphics[angle=90,scale=\scT]{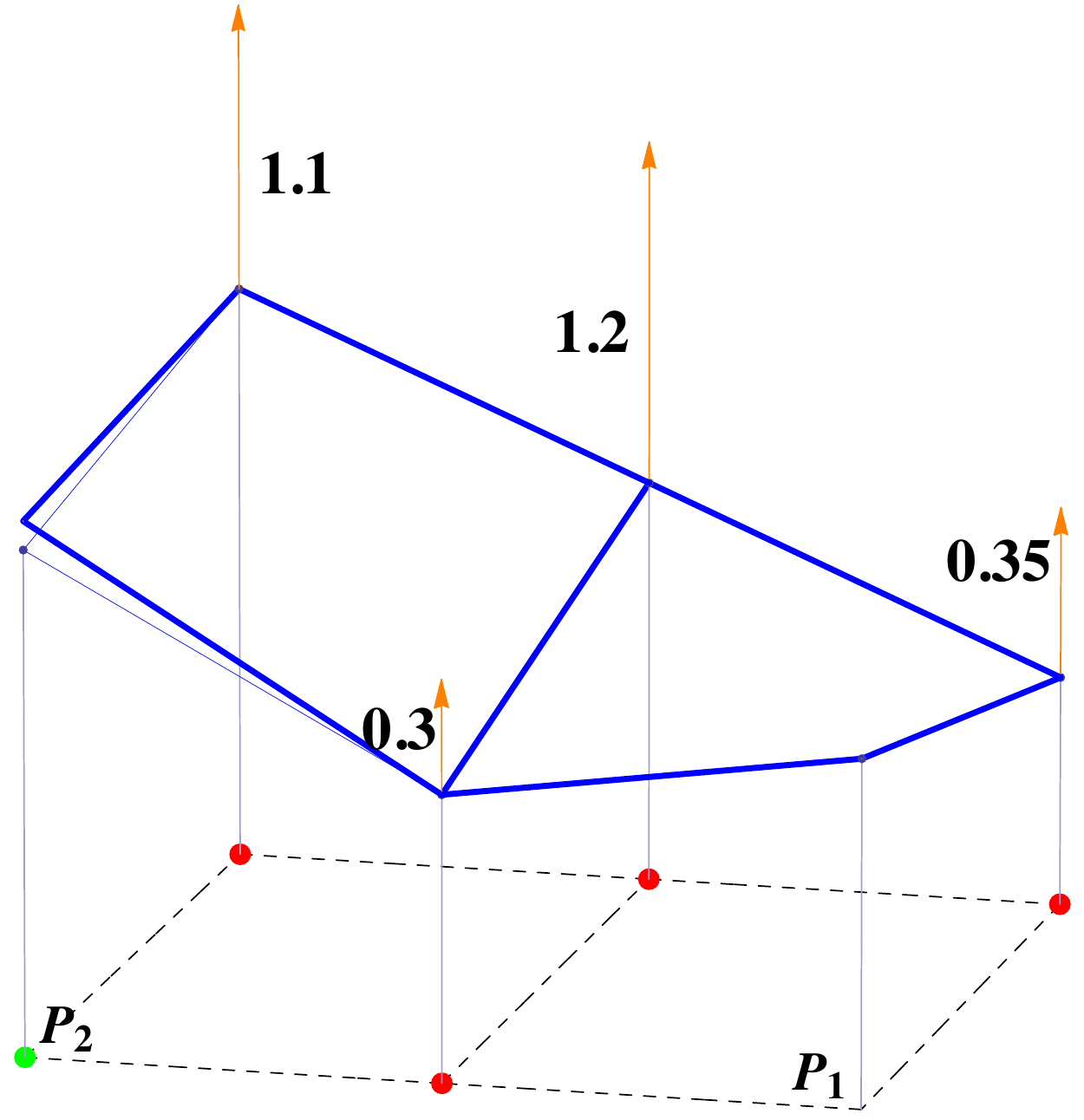} & \includegraphics[angle=90,scale=\scT]{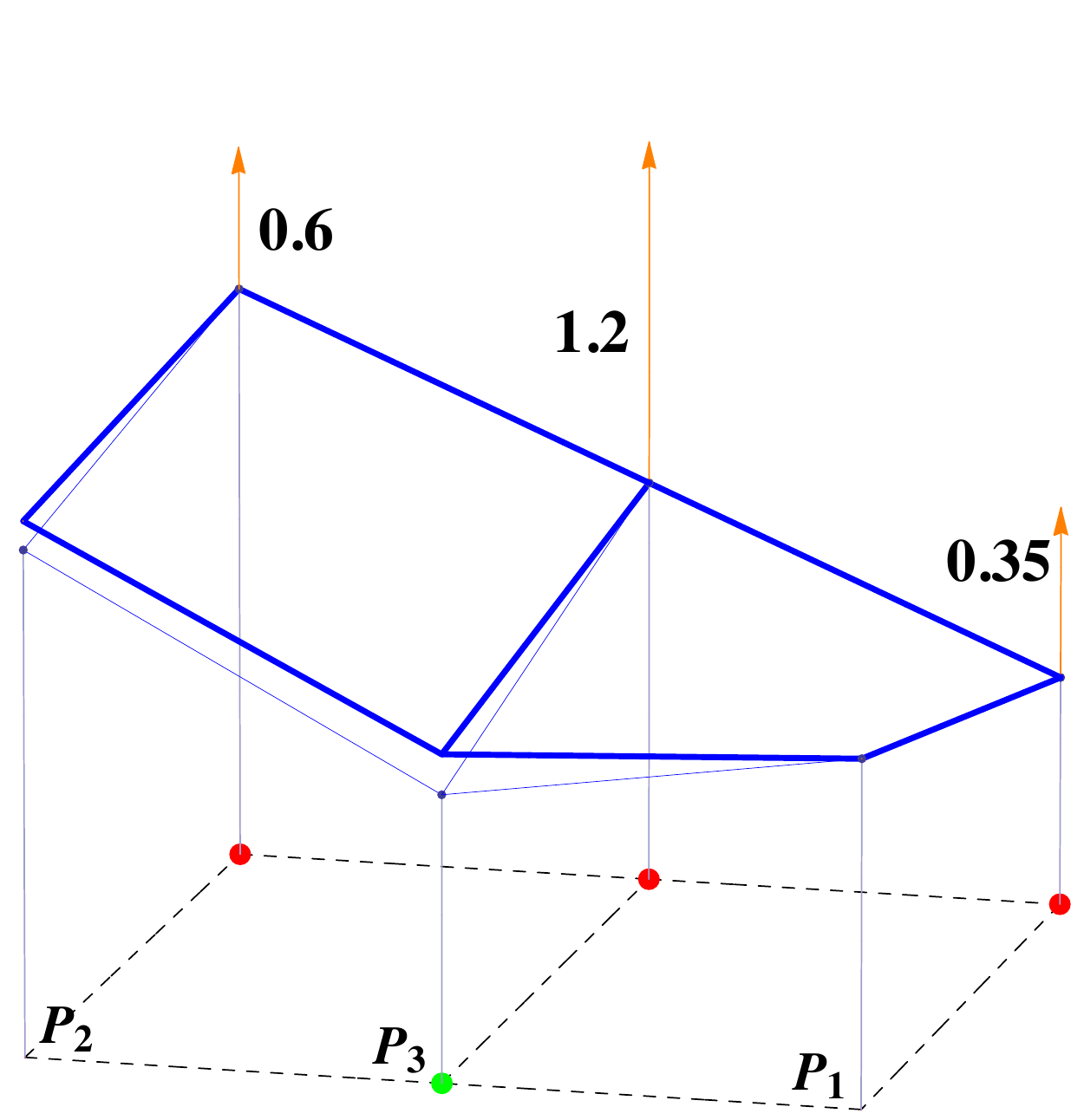}\\
\hline
\includegraphics[angle=90,scale=\scT]{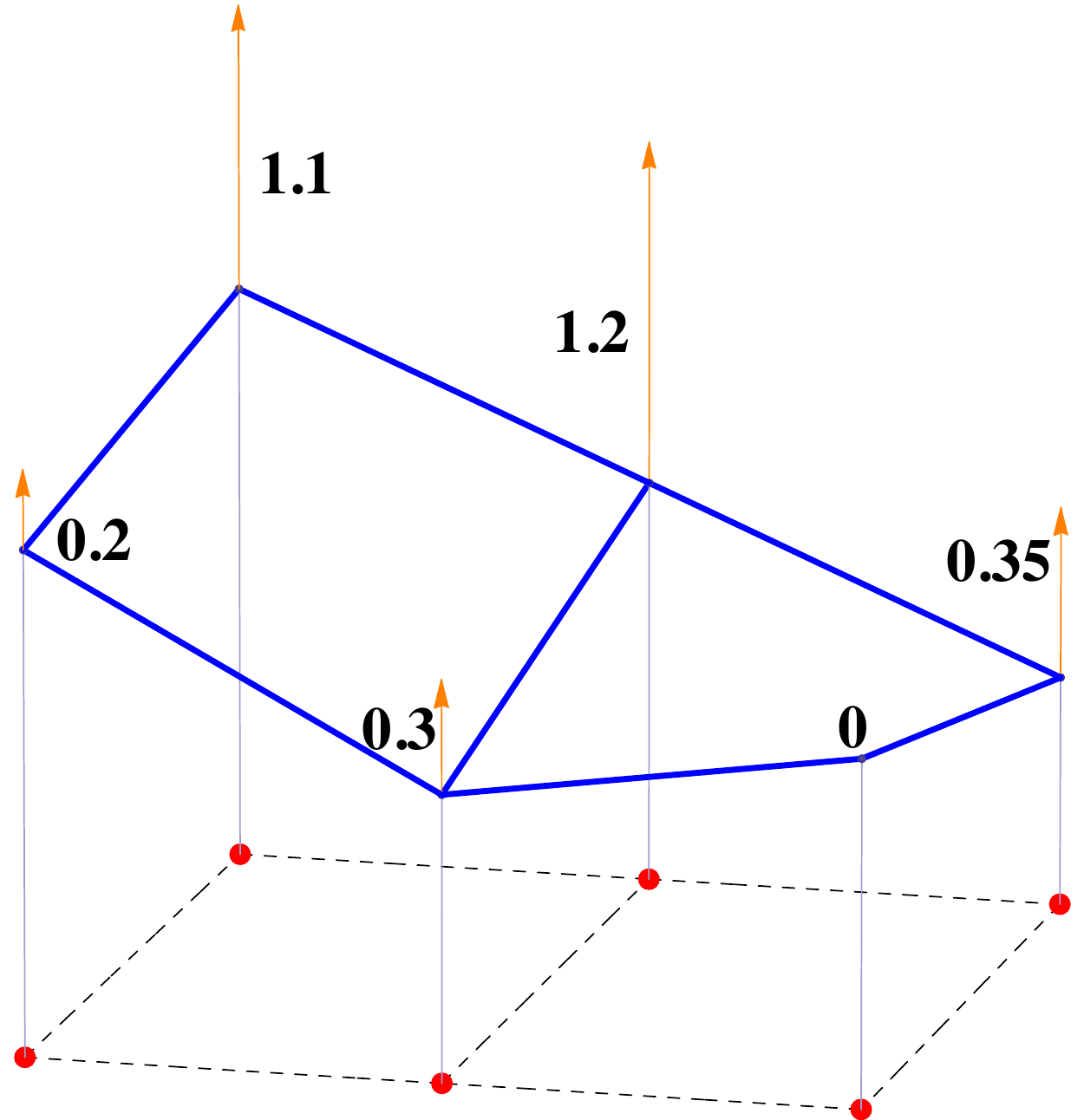} & \includegraphics[angle=90,scale=\scT]{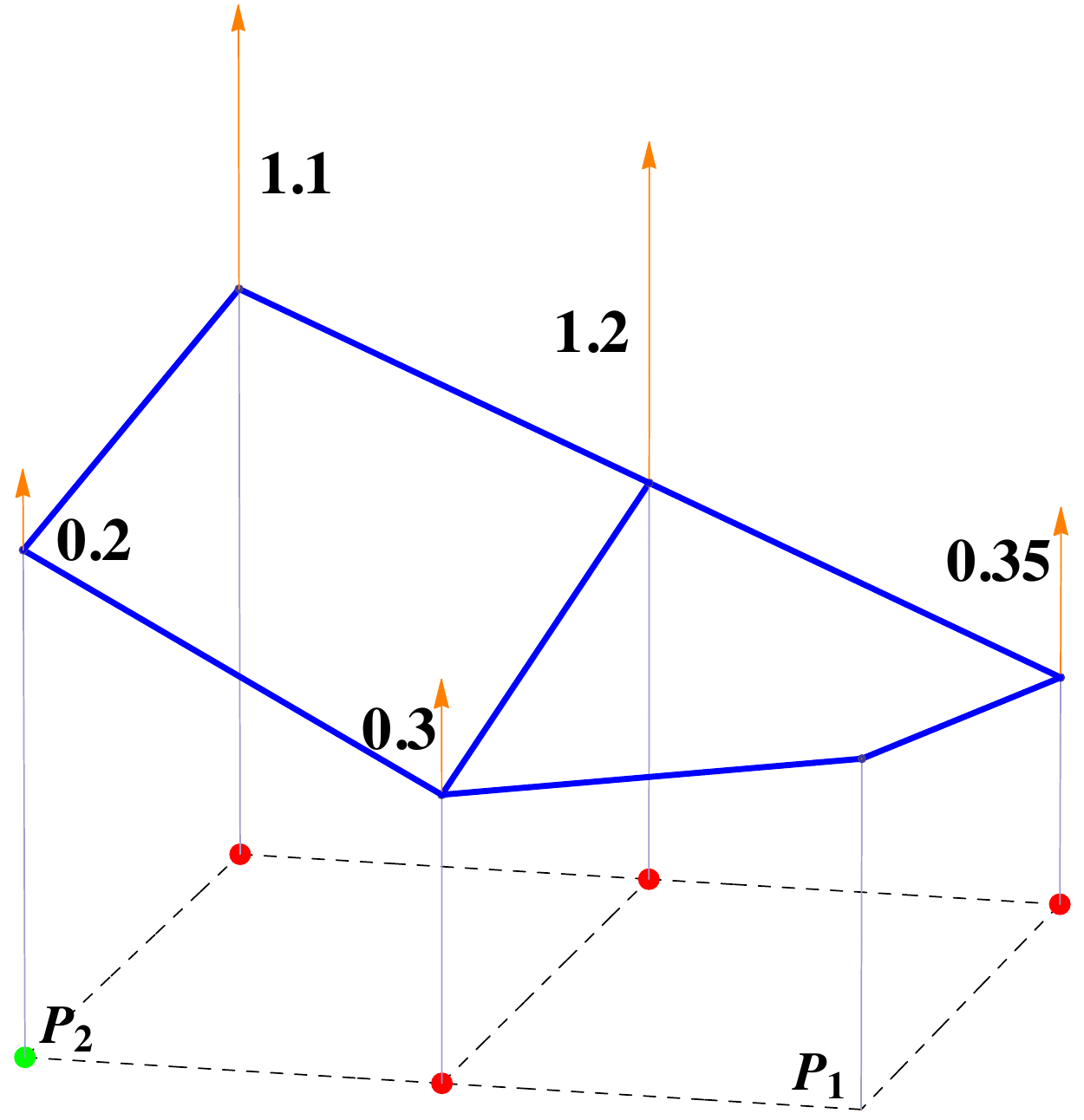} & \includegraphics[angle=90,scale=\scT]{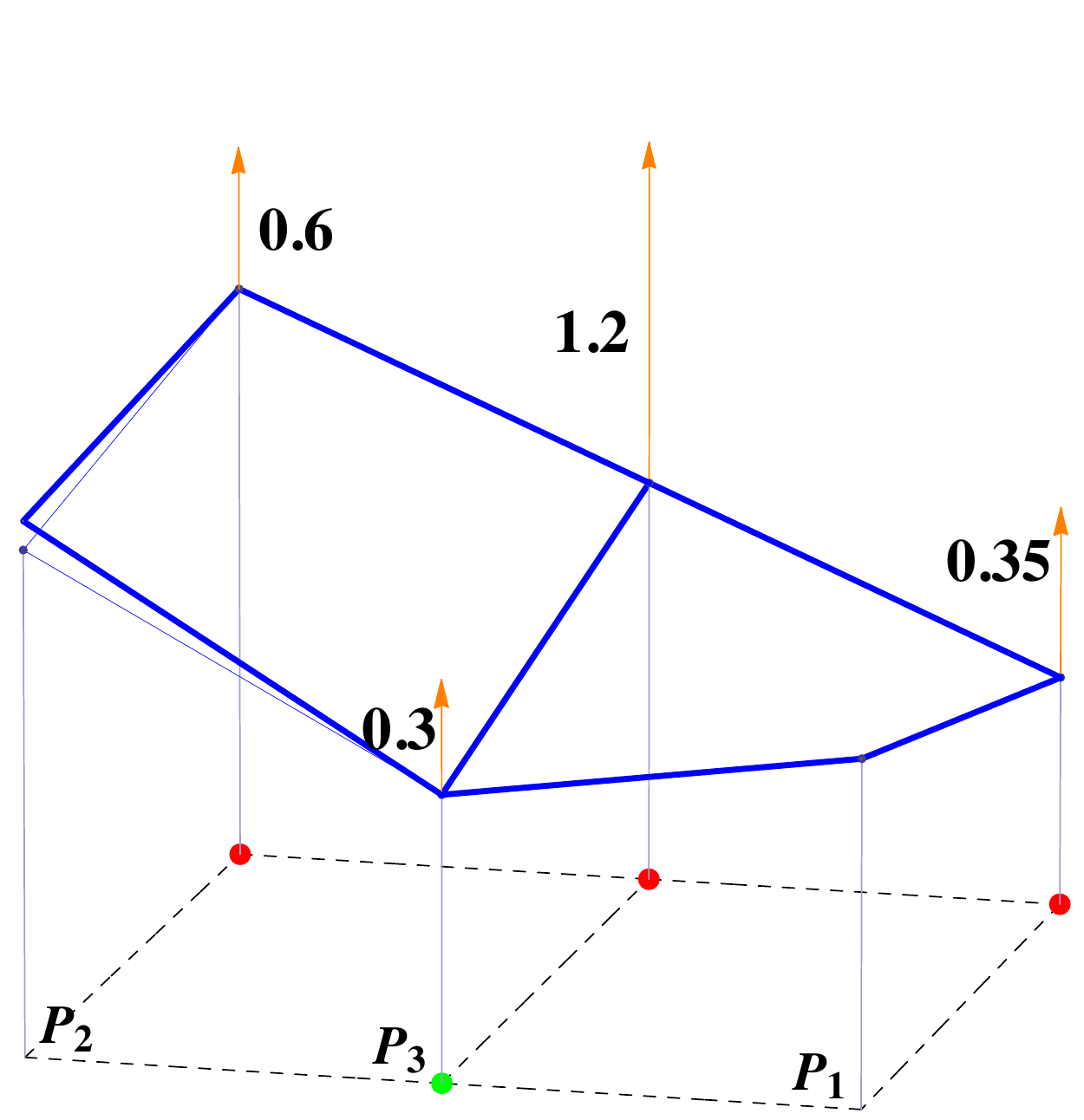}\\
\hline 
\end{tabular}
\newpage 
\begin{tabular}{|c|c|c|}
   \multicolumn{3}{c}{Table 1: An illustration of the action of the addition algorithm (cont.)} \\
\hline
\begin{sideways}
\parbox{6cm}{
 	\underline{Loop } 
	\begin{enumerate}
		\item[(1)] The green (gray in b\&w)  vertex is set to be $P_4$ and to be processed.
		\item[(2)] It is shifted by $s_4 := 0.35$.
		\item[(3)] The requested shifts are updated. For the top-center vertex $v$: {\small $\tau_5(v,\varphi_v):=0.5<0.69=\tau_4(v,\varphi_v)$.} For all other $w$:   $\tau_5(w,\varphi_w) := \tau_4(w,\varphi_w)$.
	\end{enumerate}   
	{\small The requested shift of a vertex may be decreased several times.}

}   $\:$       
\end{sideways} &
\begin{sideways}
\parbox{6cm}{
 	 \underline{Loop } 
		\begin{enumerate}
			\item[(1)] The green (gray in b\&w)  vertex is set to be $P_5$ and to be processed.
			\item[(2)] It is shifted by $s_5 := 0.5$.
			\item[(3)] The requested shifts are updated. For the top-left vertex $v$: {\small $\tau_6(v,\varphi_v):=0.55<0.6=\tau_5(v,\varphi_v)$.} For the processed vertices (all except $v$) $\tau_6:=\tau_5$.
		\end{enumerate}  
}
\end{sideways}
  & 
\begin{sideways}
\parbox{6cm}{
   	 \underline{Loop } 
		\begin{enumerate}
			\item[(1)] The green (gray in b\&w)  vertex is set to be $P_6$ and to be processed.
			\item[(2)] It is shifted by $s_6 := 0.55$.
		\end{enumerate}  
		\vspace{2pt}
		{\small  The consecutive shifts increase, \\$s_1 \leq s_2 \leq s_3 \leq s_4\leq s_5 \leq s_6.$\\}
		{~\\}
		\begin{center}
			The algorithm terminates!
		\end{center}
}
\end{sideways}

\tabularnewline
\hline 
\includegraphics[angle=90,scale=\scT]{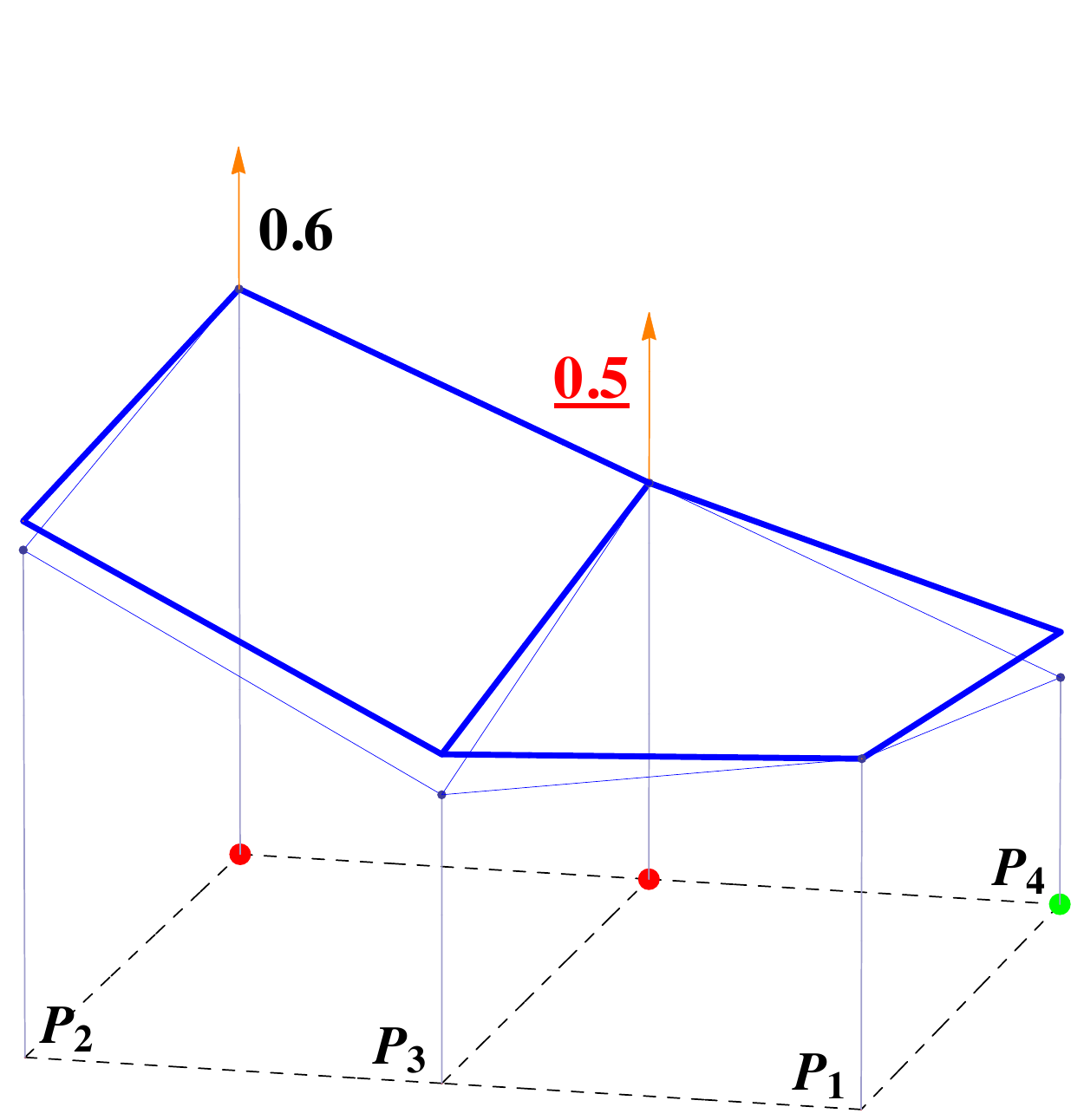} & \includegraphics[angle=90,scale=\scT]{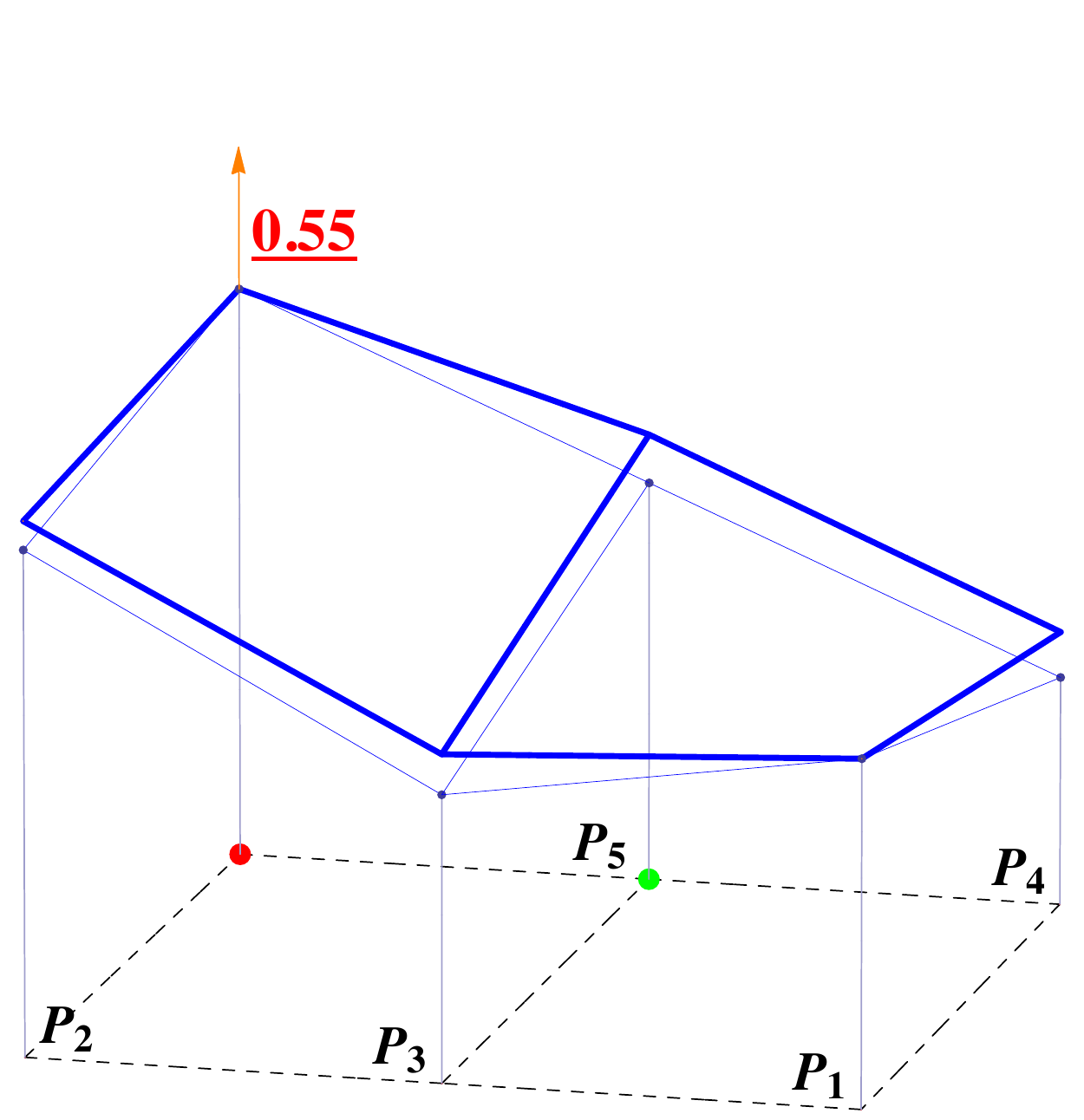} & \includegraphics[angle=90,scale=\scT]{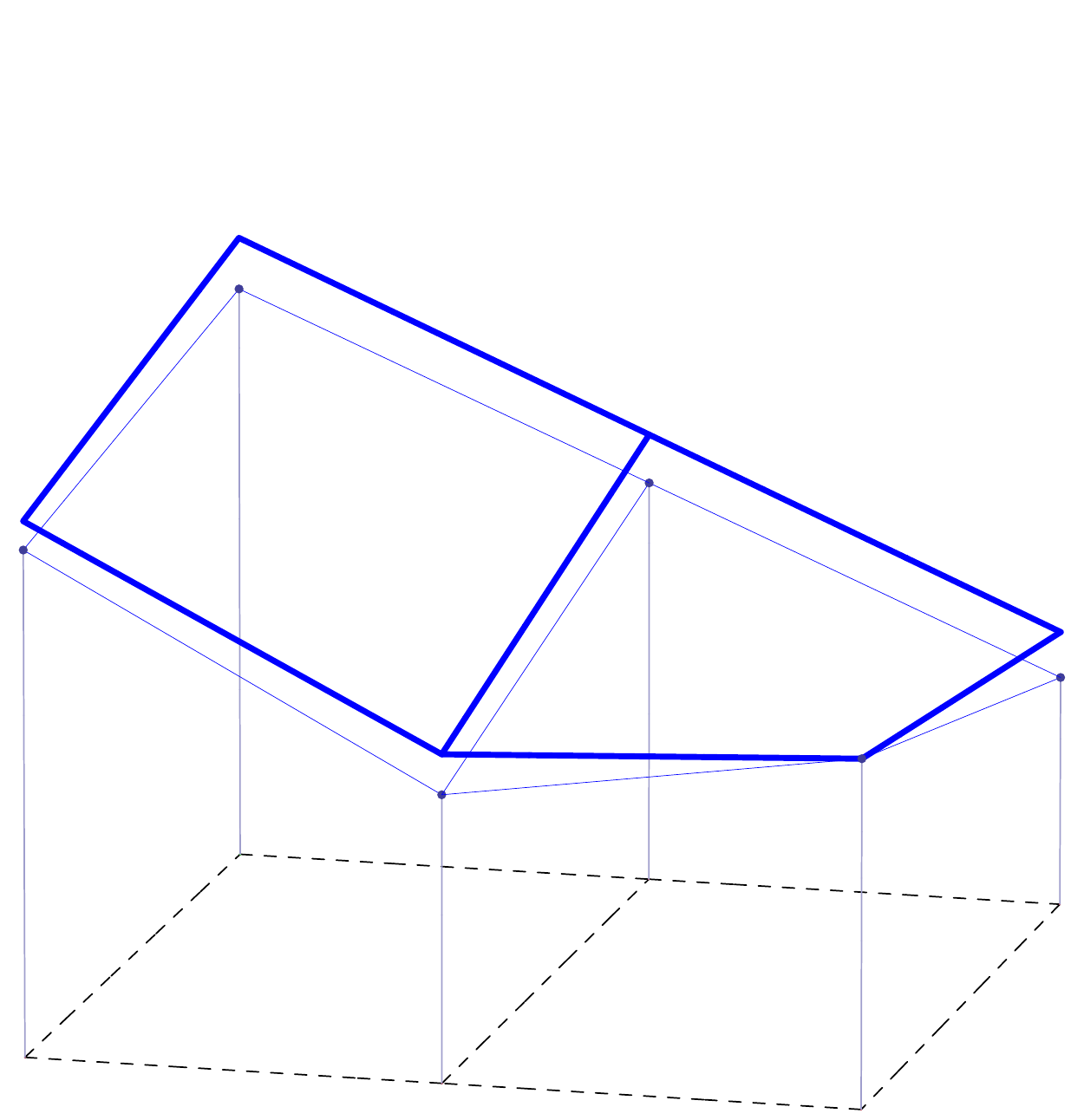}\\
\hline
\includegraphics[angle=90,scale=\scT]{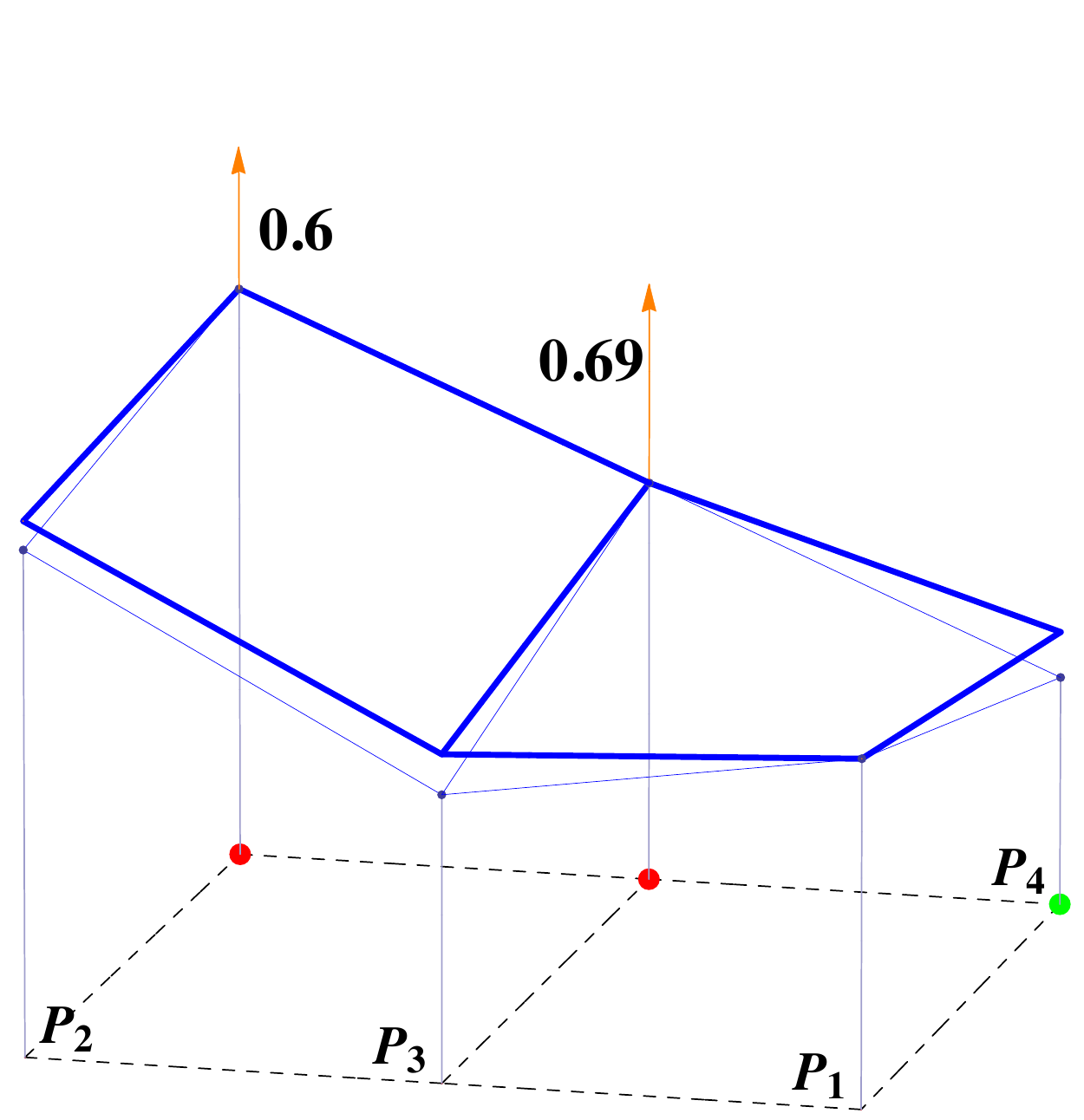} & \includegraphics[angle=90,scale=\scT]{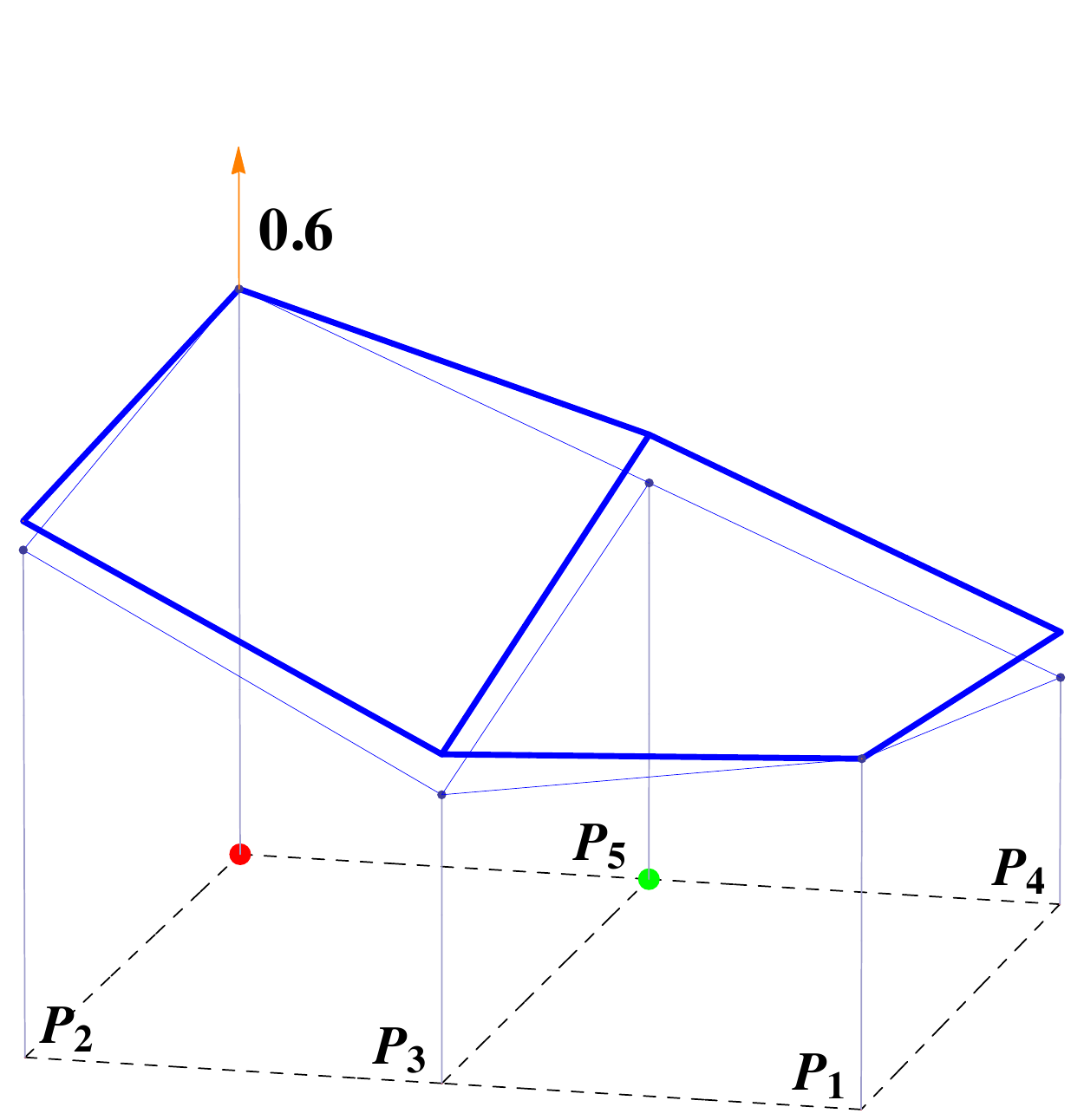} & \includegraphics[angle=90,scale=\scT]{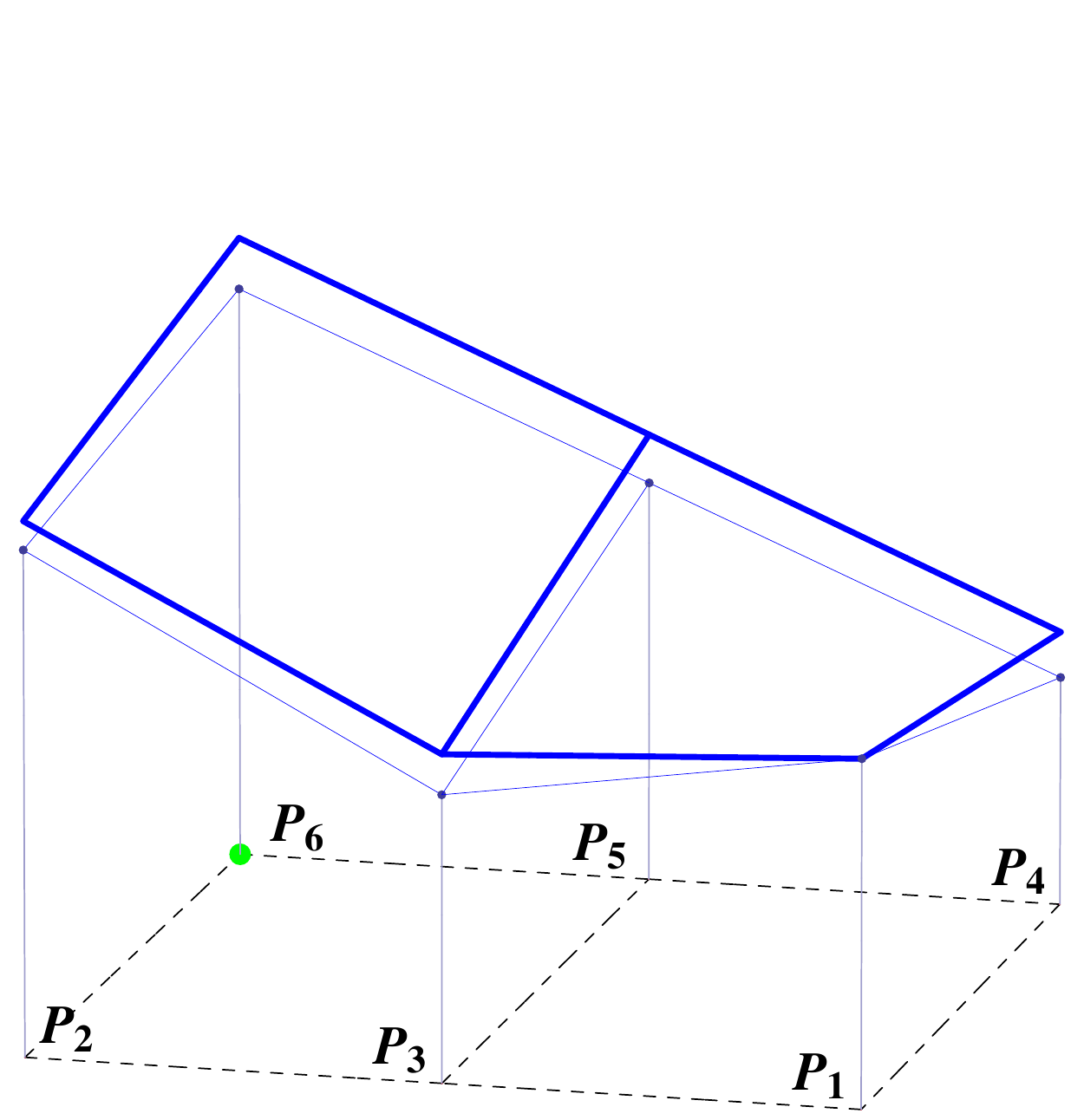}\\
\hline
\includegraphics[angle=90,scale=\scT]{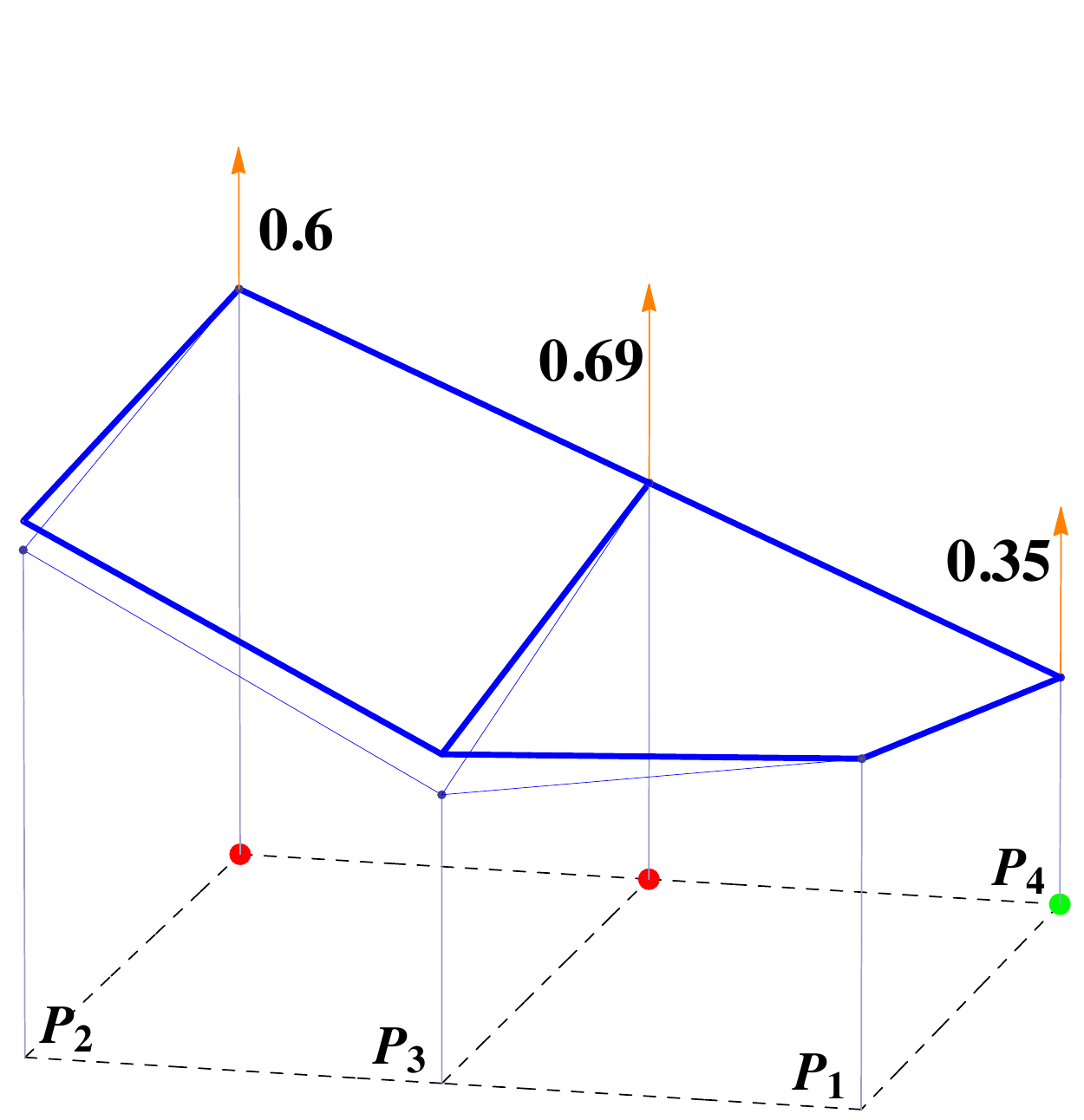} & \includegraphics[angle=90,scale=\scT]{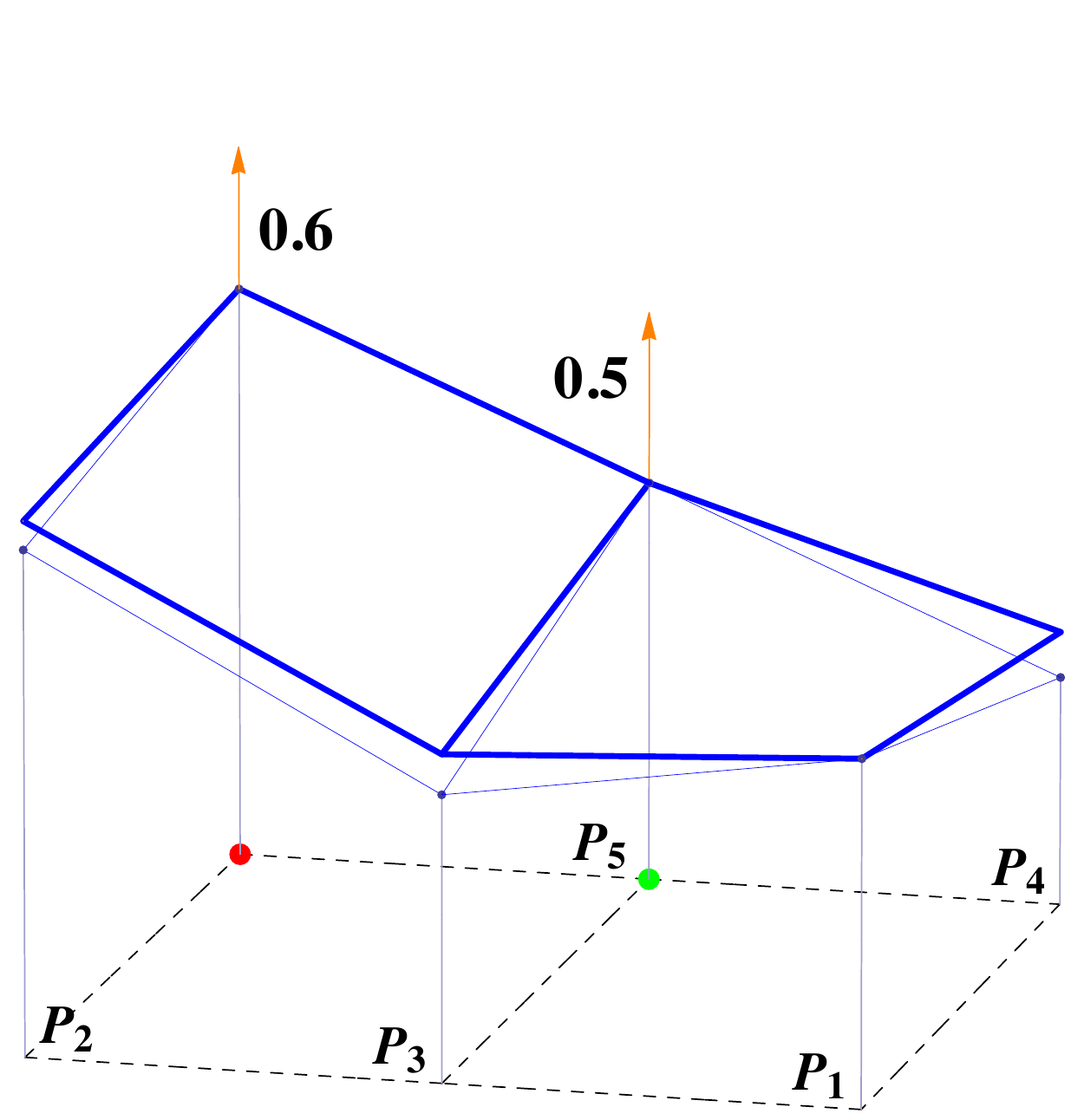} & \includegraphics[angle=90,scale=\scT]{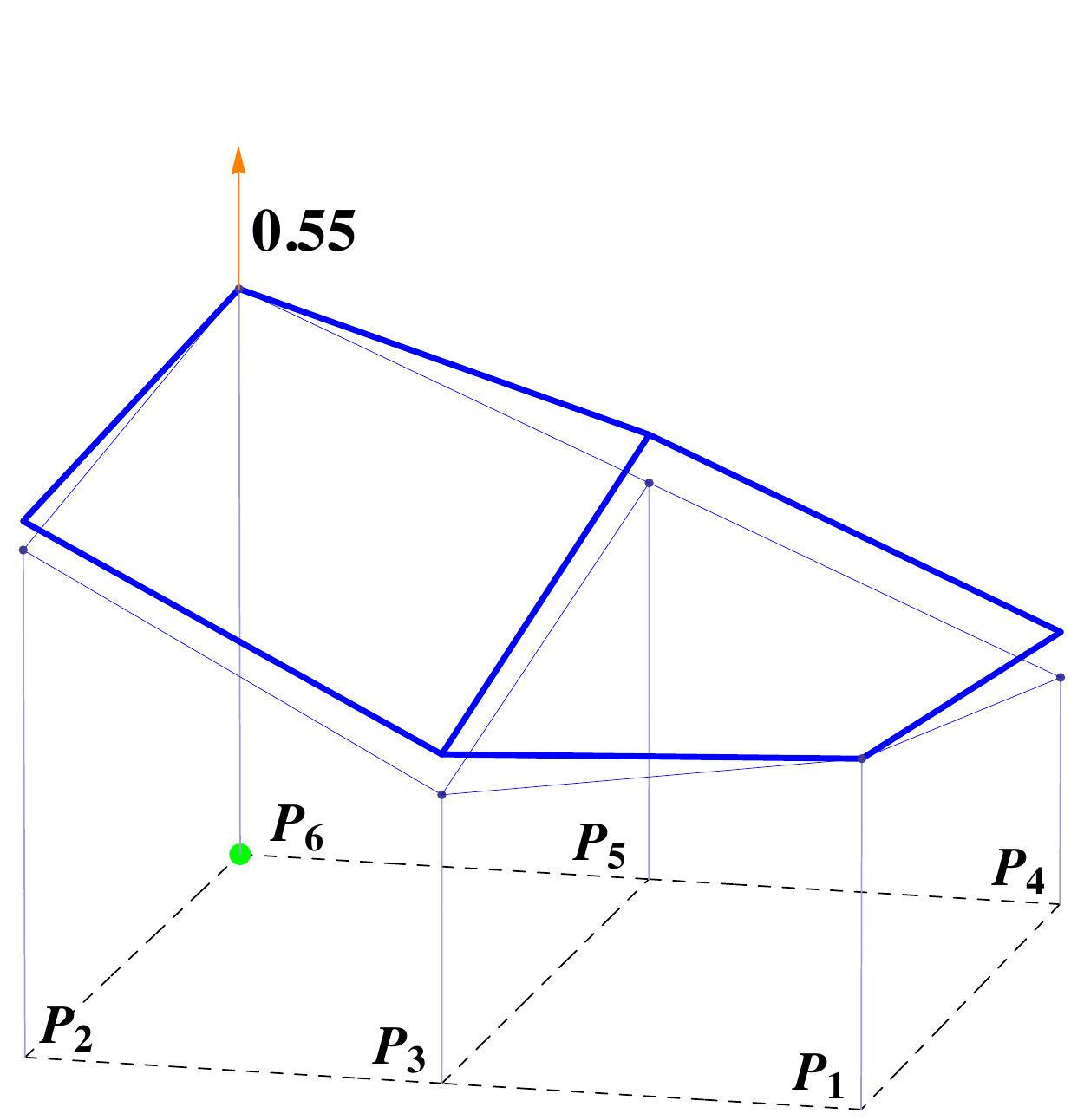}\\
\hline 
\end{tabular}   
  \end{adjustwidth}
In the next sections we verify that the mapping $T^+$ defined by
\eqref{eq:T_plus_def} satisfies the properties declared in
Section~\ref{sec:addition_algorithm_prop}.
\subsection{Increments and Lipschitz property}
In this section we verify
properties~\eqref{property:maximal_increment} and
\eqref{property:Lipschitz_preservation} from
Section~\ref{sec:addition_algorithm_prop} for $T^+$. Property
\eqref{property:maximal_increment} is an immediate consequence of
the definition~\eqref{eq:T_plus_def} of $T^+$ combined with
\eqref{eq:s_k_range} below.
\begin{lem}
\label{lem:increasingSeq} For any $\varphi\in\R^V$ we have
\begin{gather}
\tau(v) \equiv \tau_1(v,\cdot)\ge \tau_2(v,\cdot)\ge \cdots\ge
\tau_{|V|}(v,\cdot)\ge 0,\quad v\in V,\label{eq:tau_k_non_increasing}\\
s_k\in[0,\tau(P_k)],\quad 1\le k\le |V|,\label{eq:s_k_range}\\
s_1\le s_2\le \cdots\le s_{|V|}.\label{eq:s_k_non_decreasing}
\end{gather}
\end{lem}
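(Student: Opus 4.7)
The plan is to prove all three claims simultaneously by a single induction on $k$, since they feed into each other: verifying the range of $s_k$ relies on non-negativity of $\tau_k$, the monotonicity $s_k \le s_{k+1}$ relies on the definition of $P_k$ as a minimiser, and non-negativity of $\tau_{k+1}$ requires $s_k \ge 0$.

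For the base case $k=1$, by the initialisation $\tau_1(v,\cdot)\equiv\tau(v)\ge 0$, and $P_1$ is chosen to minimise $\tau_1(v,\varphi_v)=\tau(v)$, so $s_1=\tau(P_1)=\min_{v\in V}\tau(v)\in[0,\tau(P_1)]$.

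For the inductive step, assume \eqref{eq:tau_k_non_increasing}--\eqref{eq:s_k_non_decreasing} hold up to index $k$. The first step is to read off from \eqref{eq:algorithm_update_tau} that $\tau_{k+1}(v,\cdot)\le\tau_k(v,\cdot)$ pointwise, which handles the monotone part of \eqref{eq:tau_k_non_increasing}. For the non-negativity $\tau_{k+1}(v,\cdot)\ge 0$, the only non-trivial case is $v\notin\{P_1,\dots,P_k\}$ with $v\sim P_k$, where $\tau_{k+1}(v,h)=\min(\tau_k(v,h),m_{v,\varphi_{P_k},s_k}(h))$. Here I would inspect the definition~\eqref{eq:m_alternative_def}: whether $\tau(v)\ge s_k$ or $\tau(v)<s_k$, one obtains the pointwise bound $m_{v,\varphi_{P_k},s_k}(\cdot)\ge s_k$ (because $f\ge 0$ and $\min(\tau(v)-s_k,\eps/2)\ge 0$ in the first case, while it equals $s_k$ in the second), and $s_k\ge 0$ by the induction hypothesis \eqref{eq:s_k_range}. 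Combined with $\tau_k(v,h)\ge 0$, this gives $\tau_{k+1}(v,h)\ge 0$.

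Next, \eqref{eq:s_k_range} for $k+1$ follows at once from the definition $s_{k+1}=\tau_{k+1}(P_{k+1},\varphi_{P_{k+1}})$: non-negativity was just established, and the chain $\tau_{k+1}(P_{k+1},\cdot)\le\tau_1(P_{k+1},\cdot)=\tau(P_{k+1})$ from the already-proved part of \eqref{eq:tau_k_non_increasing} gives the upper bound. Finally, for \eqref{eq:s_k_non_decreasing} at step $k+1$, I would argue that for every unprocessed $v\notin\{P_1,\dots,P_k\}$ one has $\tau_{k+1}(v,\varphi_v)\ge s_k$: indeed $\tau_k(v,\varphi_v)\ge\tau_k(P_k,\varphi_{P_k})=s_k$ by the minimising choice of $P_k$, and the bound $m_{v,\varphi_{P_k},s_k}(\varphi_v)\ge s_k$ established above takes care of the case $v\sim P_k$ in which $\tau_{k+1}$ might differ from $\tau_k$; in all other cases $\tau_{k+1}(v,\cdot)=\tau_k(v,\cdot)$. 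Since $P_{k+1}$ minimises $\tau_{k+1}(v,\varphi_v)$ over $v\notin\{P_1,\dots,P_k\}$, one concludes $s_{k+1}=\tau_{k+1}(P_{k+1},\varphi_{P_{k+1}})\ge s_k$.

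The only delicate point is the uniform lower bound $m_{v,\varphi_{P_k},s_k}(\cdot)\ge s_k$, which is really just unpacking the piecewise definition~\eqref{eq:m_alternative_def}; everything else is formal bookkeeping with the algorithm. I expect the write-up to be short, with the case split on $\tau(v)$ vs.\ $s_k$ in~\eqref{eq:m_alternative_def} carrying the whole argument.
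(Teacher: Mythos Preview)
Your proposal is correct and follows essentially the same approach as the paper: the key observation in both is the pointwise lower bound $m_{v,h,t}(\cdot)\ge t$ from \eqref{eq:m_alternative_def}, combined with the minimising choice of $P_k$ to propagate the inequalities. The paper organises the induction slightly differently (first establishing $s_k\ge 0$ for all $k$, then deducing \eqref{eq:tau_k_non_increasing} and \eqref{eq:s_k_range}, and finally \eqref{eq:s_k_non_decreasing} separately), but the content is the same as your simultaneous induction.
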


\begin{proof}
Observe that, by \eqref{eq:m_alternative_def}, we have
\begin{equation}\label{eq:m_fcn_positivity_prop}
m_{v,h,t}\ge t.
\end{equation}
We shall prove by induction that
\begin{equation}\label{eq:s_j_induction_conclusion}
  \text{$s_k\ge 0$ for all $1\le k\le |V|$}.
\end{equation}
Assume that for some $1\le k\le|V|$ we have
\begin{equation}\label{eq:s_j_induction_assumption}
  \text{$s_j\ge 0$ for all $1\le j<k$}.
\end{equation}
Recall that the function $\tau$ is non-negative. It follows from
\eqref{eq:m_fcn_positivity_prop},
\eqref{eq:s_j_induction_assumption} and the initialization and step
\eqref{alg:update_step} of the addition algorithm that
\begin{equation}\label{eq:partial_tau_k_non_increasing}
  \tau(v) \equiv \tau_1(v,\cdot)\ge \tau_2(v,\cdot)\ge \cdots\ge
\tau_{k}(v,\cdot)\ge 0,\quad v\in V.
\end{equation}
In particular, $s_k = \tau_k(P_k, \varphi_{P_k})\ge 0$. Thus,
\eqref{eq:s_j_induction_assumption} remains true when $k$ is
replaced by $k+1$. We conclude that
\eqref{eq:s_j_induction_conclusion} holds.

It now follows, in the same way that
\eqref{eq:partial_tau_k_non_increasing} was deduced from
\eqref{eq:s_j_induction_assumption}, that
\eqref{eq:tau_k_non_increasing} is valid. Now \eqref{eq:s_k_range}
is verified upon recalling that $s_k = \tau_k(P_k,\varphi_{P_k})$.
It remains to verify \eqref{eq:s_k_non_decreasing}. Let $1\le
k<|V|$. Our choice of the point $P_k$ in step \eqref{alg:P_k_choice}
of the addition algorithm ensures that
\begin{equation}\label{eq:using_P_k_minimality}
  s_k=\tau_k(P_k, \varphi_{P_k})\le \tau_k(P_{k+1},
  \varphi_{P_{k+1}}).
\end{equation}
In addition, it follows from \eqref{eq:m_fcn_positivity_prop} that
\begin{equation*}
  s_k \le m_{P_{k+1}, \varphi_{P_k}, s_k}(h)\quad\text{for all $h\in \R$}.
\end{equation*}
Thus \eqref{eq:algorithm_update_tau} implies that
\begin{equation*}
s_{k+1} = \tau_{k+1}(P_{k+1}, \varphi_{P_{k+1}}) \ge s_k.
\end{equation*}
As $k$ is arbitrary, this establishes \eqref{eq:s_k_non_decreasing}.
\end{proof}

In the next lemma we investigate the gradient of $T^+(\varphi)$,
establishing property~\eqref{property:Lipschitz_preservation} from
Section~\ref{sec:addition_algorithm_prop} for $T^+$.
\begin{lem}
For any $\varphi\in\R^V$ and any edge $(v,w)\in E$,
\begin{align}
    &\text{if }|\varphi_v - \varphi_w|\ge 1\text{ then } T^+(\varphi)_v -
    T^+(\varphi)_w = \varphi_v - \varphi_w,\label{eq:slopes_larger_than_1_unchanged}\\
    &\text{if }|\varphi_v - \varphi_w|< 1\text{ then } |T^+(\varphi)_v -
    T^+(\varphi)_w| < 1.\label{eq:slopes_smaller_than_1_change}
\end{align}
\end{lem}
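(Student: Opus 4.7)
The plan is to fix an arbitrary edge $(v,w) \in E$ and, without loss of generality (by the symmetry of the statement in $v$ and $w$), assume that in the run of the addition algorithm on $\varphi$ we have $v = P_i$ and $w = P_j$ with $i < j$. By the definition \eqref{eq:T_plus_def} of $T^+$,
\[
T^+(\varphi)_v - T^+(\varphi)_w = (\varphi_v - \varphi_w) + (s_i - s_j),
\]
so everything reduces to controlling the difference $s_j - s_i$, which by Lemma~\ref{lem:increasingSeq} is already known to be non-negative.

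The key step is to apply the update rule \eqref{eq:algorithm_update_tau} at iteration $k=i$, using that $w \sim P_i = v$ and $w \notin \{P_1,\dots,P_i\}$. This forces
\[
\tau_{i+1}(w,\varphi_w) \le m_{w,\varphi_v,s_i}(\varphi_w).
\]
The minimality of $s_i = \tau_i(P_i,\varphi_{P_i})$ over unprocessed vertices, together with $\tau_i \le \tau$, gives $s_i \le \tau(w)$, so we are in the first branch of \eqref{eq:m_alternative_def} and
\[
m_{w,\varphi_v,s_i}(\varphi_w) = \min\!\bigl(\tau(w)-s_i,\tfrac{\eps}{2}\bigr)\,f(\varphi_w-\varphi_v) + s_i.
\]
Chaining with $s_j = \tau_j(w,\varphi_w) \le \tau_{i+1}(w,\varphi_w)$ (via \eqref{eq:tau_k_non_increasing}) yields the fundamental inequality
\[
0 \le s_j - s_i \le \tfrac{\eps}{2}\,f(\varphi_w-\varphi_v).
\]

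From this, \eqref{eq:slopes_larger_than_1_unchanged} is immediate: if $|\varphi_v - \varphi_w| \ge 1$ then $f(\varphi_w-\varphi_v)=0$ by \eqref{eq:bump_function_def}, so $s_j = s_i$ and the claimed equality holds. For \eqref{eq:slopes_smaller_than_1_change}, set $\Delta = \varphi_v - \varphi_w \in (-1,1)$; we must show $|\Delta + s_i - s_j| < 1$. Since $s_i \le s_j$, the upper bound $\Delta + s_i - s_j \le \Delta < 1$ is trivial, and for the lower bound we need $s_j - s_i < 1 + \Delta$.

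The only delicate regime is $\Delta < 0$ close to $-1$, which is precisely the case the bump function $f$ was designed to handle. I split by the sign/magnitude of $\Delta$: if $\Delta \ge 0$ then $1+\Delta \ge 1 > \eps/2 \ge s_j - s_i$; if $\Delta \in (-1+\eps,0)$ then $1+\Delta > \eps \ge s_j - s_i$; and if $\Delta \in (-1,-1+\eps]$ then $f(-\Delta) = (1+\Delta)/\eps$, giving $s_j - s_i \le (1+\Delta)/2 < 1+\Delta$. In every case the strict inequality holds, completing the proof. The only place that is even mildly tricky is this last subcase, but the linear ramp of $f$ near the endpoints was tailored exactly so that the bound $s_j-s_i \le (\eps/2) f(\varphi_w - \varphi_v)$ degrades in step with $1+\Delta$, so no further input is needed.
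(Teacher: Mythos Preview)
Your proof is correct and follows essentially the same route as the paper: fix the edge, order its endpoints by their processing times, and use the update rule \eqref{eq:algorithm_update_tau} together with the monotonicity \eqref{eq:tau_k_non_increasing} to bound $s_j - s_i$ via $m_{w,\varphi_v,s_i}(\varphi_w)$. The only cosmetic difference is that for the lower bound in \eqref{eq:slopes_smaller_than_1_change} the paper invokes the Lipschitz constant $\le \tfrac{1}{2}$ of $m_{w,\varphi_v,s_i}(\cdot)$ directly (comparing its values at $\varphi_w$ and $\varphi_v+1$) to get $s_j - s_i \le \tfrac{1}{2}(1+\Delta)$ in one stroke, whereas you unpack $m$ into the explicit formula with $f$ and recover the same bound through a case split on $\Delta$.
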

\begin{proof}
Fix an edge $(v,w)\in E$. Assume without loss of generality that
$v=P_k$ and $w=P_\ell$ for some $1\le k<\ell\le|V|$. Observe that,
by step \eqref{alg:update_step} of the addition algorithm,
\begin{equation}\label{eq:s_ell_m_estimate}
  s_\ell = \tau_\ell(w, \varphi_w) \le m_{w,
  \varphi_v, s_k}(\varphi_{w}).
\end{equation}
Now assume that $|\varphi_v - \varphi_w|\ge 1$. Then, by the
definition \eqref{eq:m_alternative_def} of $m$, we have that
\begin{equation*}
  m_{w, \varphi_v, s_k}(\varphi_w) = s_k.
\end{equation*}
Combining the last two inequalities with
\eqref{eq:s_k_non_decreasing} shows that $s_\ell = s_k$. The
equality \eqref{eq:slopes_larger_than_1_unchanged} now follows from
\eqref{eq:T_plus_def}. Assume now
that $|\varphi_v - \varphi_w|< 1$. On the one hand, by
\eqref{eq:s_k_non_decreasing},
\begin{equation*}
  T^+(\varphi)_v - T^+(\varphi)_w = \varphi_v - \varphi_w +
  s_k - s_\ell \le \varphi_v - \varphi_w< 1.
\end{equation*}
On the other hand, by \eqref{eq:s_ell_m_estimate} and the definition
\eqref{eq:m_alternative_def} of $m$,
\begin{align*}
  T^+(\varphi)_v - T^+(\varphi)_w = \varphi_v - \varphi_w +
  s_k - s_\ell &\ge \varphi_v - \varphi_w +
  s_k - m_{w,\varphi_v, s_k}(\varphi_{w}) =\\
  &= \varphi_v - \varphi_w +
  m_{w,\varphi_v,s_k}(\varphi_v + 1) - m_{w,\varphi_v,
  s_k}(\varphi_{w}).
\end{align*}
Therefore, by \eqref{eq:m_Lipschitz_constant} and our assumption
that $|\varphi_v - \varphi_w|< 1$,
\begin{equation*}
  T^+(\varphi)_v - T^+(\varphi)_w \ge \varphi_v - \varphi_w -
  \frac{1}{2}(\varphi_v + 1 - \varphi_w) = -1 + \frac{1}{2}(\varphi_v + 1 - \varphi_w) > -1.
\end{equation*}
Hence $|T^+(\varphi)_v - T^+(\varphi)_w| < 1$, establishing
\eqref{eq:slopes_smaller_than_1_change}.
\end{proof}

\subsection{Bijectivity}\label{sec:bijectivitiy}
In this section we define an inverse $(T^+)^{-1}$ to the mapping
$T^+$, thereby establishing that $T^+$ is one-to-one and onto as
claimed in property~\eqref{property:one-to-one_onto} from
Section~\ref{sec:addition_algorithm_prop}.

The definition of $(T^+)^{-1}$ uses the same graph $G=(V,E)$,
function $\tau$, constant $\eps$, total order $\preceq$ on $V$ and
family of functions $m_{v,h,t}$ as the definition of $T^+$. It is
based on the following algorithm which takes as input a function
$\tilde{\varphi}\in\R^V$ and outputs four sequences indexed by $1\le
k\le |V|$:
\begin{enumerate}
\item A sequence $(\tilde{P}_k)$ which is a ordering of the vertices $V$,
that is, $\{\tilde{P}_k\} = V$.
\item A sequence $(\tilde{s}_k)\subseteq[0,\infty)$ with $\tilde{s}_k$ representing the amount to subtract from $\tilde{\varphi}$ at vertex
$\tilde{P}_k$.
\item Two auxiliary sequences of functions, $\tilde{\tau}_k:V\times\R\to\R$ and $\tilde{D}_k:V\times\R\to\R$.
\end{enumerate}
The mapping $(T^+)^{-1}:\R^V\to\R^V$ is then defined by
\begin{equation}\label{eq:T_plus_inverse_def}
  (T^+)^{-1}(\tilde{\varphi}) :=
\varphi\;\;\text{ with }\;\;\varphi_{\tilde{P}_k} :=
\tilde{\varphi}_{\tilde{P}_k} - \tilde{s}_k,\quad 1\le k\le |V|.
\end{equation}
\textbf{Inverse addition algorithm:}\\
\noindent \underline{Initialization}. Set $\tilde{\tau}_1(v,h):=\tau(v)$ for all $v\in V$ and $h\in\R$.\\
\noindent \underline{Loop}. For $k$ between $1$ and $|V|$ do:
\begin{enumerate}
\item For each $v\in V$, define $\tilde{D}_k(v,\cdot)$ to be the inverse of the
mapping $h\mapsto h + \tilde{\tau}_k(v,h)$, which exists by
Lemma~\ref{lem:invertibility_in_inverse_alg_def} below.
\item \label{alg:P_k_choice_inv} Set $\tilde{P}_k$ to be the vertex $v$ in $V\setminus \{\tilde{P}_1, \ldots,
\tilde{P}_{k-1}\}$ which minimizes
$\tilde{\tau}_k(v,\tilde{D}_k(v,\tilde{\varphi}_v))$. If there are
multiple vertices achieving the same minimum let $\tilde{P}_k$ be
the smallest one with respect to the total order $\preceq$.
\item \label{alg:s_k_def_inv} Set $\tilde{s}_k:=\tilde{\tau}_k(\tilde{P}_k,\tilde{D}_k(\tilde{P}_k,\tilde{\varphi}_{\tilde{P}_k}))$.
\item \label{alg:update_step_inv} If $k<|V|$ set, for each $v\in V$ and
$h\in\R$,
\begin{equation}\label{eq:algorithm_update_tau_inv}
\tilde{\tau}_{k+1}(v,h):=\begin{cases} \tilde{\tau}_{k}(v,h) &
\text{ if $v\in\{\tilde{P}_1,\ldots, \tilde{P}_k\}$ or $v\not \sim
\tilde{P}_k$}\\
\min(\tilde\tau_{k}(v,h),
m_{v,\tilde{\varphi}_{\tilde{P}_{k}}-\tilde{s}_k,\tilde{s}_k}(h)) &
\text{ if $v\notin\{\tilde{P}_1,\ldots, \tilde{P}_k\}$ and }v\sim
\tilde{P}_{k}
\end{cases}.
\end{equation}
\end{enumerate}

\begin{lem}\label{lem:invertibility_in_inverse_alg_def}
  For any $\tilde{\varphi}\in\R^V$, any $v\in V$ and any $1\le k\le |V|$ the
  function $h \mapsto h + \tilde{\tau}_k(v,h)$ is continuous and strictly
  increasing from $\R$ onto $\R$. Consequently $\tilde{D}_k(v,\cdot)$ is well-defined on $\R$, is also continuous
  and strictly increasing and we have
  \begin{alignat*}{2}
    &\tilde{D}_k(v, h + \tilde{\tau}_k(v,h)) = h,\quad&& h\in\R,\\
    &\tilde{D}_k(v,\tilde{h}) +
    \tilde{\tau}_k(v,\tilde{D}_k(v,\tilde{h})) = \tilde{h},\quad&&
    \tilde{h}\in\R.
  \end{alignat*}
\end{lem}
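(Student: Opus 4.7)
The plan is to prove the statement by induction on $k$, the main point being that $m_{v,h,t}$ has Lipschitz constant at most $1/2$ (recorded as \eqref{eq:m_Lipschitz_constant}) and is bounded in its third argument. Once the stated properties of $h\mapsto h+\tilde{\tau}_k(v,h)$ are established, the existence and properties of $\tilde{D}_k(v,\cdot)$ as its inverse are automatic, as are the two inverse identities. Note that the induction also simultaneously justifies that the inverse addition algorithm is well-defined, since steps \eqref{alg:P_k_choice_inv}--\eqref{alg:update_step_inv} at stage $k$ only depend on $\tilde{D}_j$ for $j\le k$.

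For the base case $k=1$, we have $\tilde{\tau}_1(v,h)=\tau(v)$, which is constant in $h$, so $h\mapsto h+\tau(v)$ is a translation of $\R$ and trivially has all the claimed properties. For the inductive step, assume the claim for $1,2,\ldots,k$, so that $\tilde{D}_1,\ldots,\tilde{D}_k$ are well-defined and consequently $\tilde{P}_1,\ldots,\tilde{P}_k$ and $\tilde{s}_1,\ldots,\tilde{s}_k$ are determined. Inspecting the update \eqref{eq:algorithm_update_tau_inv}, if $v\in\{\tilde{P}_1,\ldots,\tilde{P}_k\}$ or $v\not\sim \tilde{P}_k$, then $\tilde{\tau}_{k+1}(v,\cdot)=\tilde{\tau}_k(v,\cdot)$ and the induction hypothesis gives what we need.

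In the remaining case, $\tilde{\tau}_{k+1}(v,h)=\min\bigl(\tilde{\tau}_k(v,h),\,m_{v,\tilde{\varphi}_{\tilde{P}_k}-\tilde{s}_k,\tilde{s}_k}(h)\bigr)$, so
\[
  h+\tilde{\tau}_{k+1}(v,h) \;=\; \min\bigl(F_1(h),\,F_2(h)\bigr),
\]
with $F_1(h):=h+\tilde{\tau}_k(v,h)$ and $F_2(h):=h+m_{v,\tilde{\varphi}_{\tilde{P}_k}-\tilde{s}_k,\tilde{s}_k}(h)$. By the induction hypothesis, $F_1$ is continuous, strictly increasing and maps $\R$ onto $\R$. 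For $F_2$, the Lipschitz bound \eqref{eq:m_Lipschitz_constant} gives
\[
  F_2(h_2)-F_2(h_1)\ \ge\ (h_2-h_1)-\tfrac{1}{2}|h_2-h_1|\ =\ \tfrac{1}{2}(h_2-h_1)
\]
for $h_2>h_1$, so $F_2$ is continuous and strictly increasing; since $m_{v,\cdot,\cdot}$ is bounded (as is clear from \eqref{eq:m_alternative_def}), we also have $F_2(h)\to\pm\infty$ as $h\to\pm\infty$, hence $F_2$ is onto $\R$. The pointwise minimum of two continuous strictly increasing surjections $\R\to\R$ is again continuous (minimum of continuous functions), strictly increasing (if $h_2>h_1$ then $F_i(h_2)>F_i(h_1)\ge \min(F_1(h_1),F_2(h_1))$ for $i=1,2$), and satisfies $\min(F_1,F_2)(h)\to\pm\infty$ as $h\to\pm\infty$, whence onto $\R$ by the intermediate value theorem. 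This completes the induction.

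Finally, a continuous strictly increasing bijection $\R\to\R$ has a continuous strictly increasing inverse defined on all of $\R$, so $\tilde{D}_k(v,\cdot)$ is well-defined with the claimed regularity, and the two displayed identities are just the defining relations $\tilde{D}_k(v,\cdot)\circ(\mathrm{id}+\tilde{\tau}_k(v,\cdot))=\mathrm{id}$ and $(\mathrm{id}+\tilde{\tau}_k(v,\cdot))\circ \tilde{D}_k(v,\cdot)=\mathrm{id}$. There is no substantial obstacle once the Lipschitz property \eqref{eq:m_Lipschitz_constant} is invoked; the only minor bookkeeping is the simultaneous induction ensuring that the algorithm's outputs $\tilde{P}_k,\tilde{s}_k$ entering into $F_2$ at stage $k+1$ are already defined when we need them.
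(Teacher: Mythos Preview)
Your proof is correct and follows essentially the same inductive approach as the paper, with the same key input being the Lipschitz bound \eqref{eq:m_Lipschitz_constant}. The paper's version is marginally more direct: rather than writing $h+\tilde{\tau}_{k+1}(v,h)=\min(F_1,F_2)$ and analysing $F_1,F_2$ separately, it simply observes that $\tilde{\tau}_k(v,\cdot)$ is a pointwise minimum of $\tau(v)$ and finitely many functions $m_{v,h,t}(\cdot)$, hence is itself $\tfrac12$-Lipschitz, from which strict monotonicity and surjectivity of $h\mapsto h+\tilde{\tau}_k(v,h)$ follow immediately.
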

\begin{proof}
  Fix $\tilde{\varphi}\in\R^V$ and $v\in V$. We prove the lemma by induction. Let $1\le \ell\le |V|$, suppose the algorithm is well-defined and the lemma holds for all $1\le k<\ell$ and let us prove the assertions of the lemma for $k=\ell$.
  Observe that $\tilde{\tau}_\ell(v,\cdot)$ is obtained
  by taking the minimum of $\tau(v)$ and the function
  $m_{v,h,t}(\cdot)$ with various values of $h$ and $t$. Thus, since
  $m_{v,h,t}(\cdot)$ has Lipschitz constant at most $\frac{1}{2}$ by
  \eqref{eq:m_Lipschitz_constant}, it follows that
  $\tilde{\tau}_\ell(v,\cdot)$ has Lipschitz constant at most
  $\frac{1}{2}$. Thus $h \mapsto h + \tilde{\tau}_\ell(v,h)$ is
  continuous and strictly increasing from $\R$ onto $\R$. The remaining assertions of
  the lemma are immediate consequences.
\end{proof}
We claim that $(T^+)^{-1}$ is indeed the inverse of $T^+$, that is,
that
\begin{align}
  &\text{For any $\varphi\in\R^V$,\; $(T^+)^{-1}(T^+(\varphi)) =
  \varphi$\; and}\label{eq:injectivity_of_T+}\\
  &\text{for any $\tilde{\varphi}\in\R^V$,\; $T^+((T^+)^{-1}(\tilde{\varphi})) =
  \tilde{\varphi}$}\label{eq:surjectivity_of_T+}.
\end{align}
These assertions are proved in the next two sections.
\subsubsection{Injectivity}
In this section we prove \eqref{eq:injectivity_of_T+}, showing that
$T^+$ is one-to-one.

Fix $\varphi\in\R^V$. Let $\{P_k\}, \{s_k\}, \{\tau_k\},
\{\tilde{P}_k\}, \{\tilde{s}_k\}, \{\tilde{\tau}_k\},
\{\tilde{D}_k\}$, $1\le k\le |V|$, be the sequences generated when
calculating $T^+(\varphi)$ and when calculating
$(T^+)^{-1}(\tilde{\varphi})$ with $\tilde{\varphi}:=T^+(\varphi)$.
By \eqref{eq:T_plus_def} and \eqref{eq:T_plus_inverse_def} it
suffices to show that
\begin{equation*}
  \tilde{P}_k = P_k,\; \tilde{s}_k = s_k,\; \tilde{\tau}_k =
  \tau_k,\quad 1\le k\le |V|.
\end{equation*}
We prove this claim by induction. We have $\tilde{\tau}_1 = \tau_1$
by the initialization steps of the algorithms. Fix $1\le k\le |V|$
and assume that
\begin{equation}\label{eq:injectivity_induction_hyp}
  \tilde{P}_j = P_j,\, \tilde{s}_j = s_j\;\text{ for $1\le j<k$ and
  }\; \tilde{\tau}_j = \tau_j\;\text{ for $1\le j\le k$}.
\end{equation}
We need to show that
\begin{equation}\label{eq:injectivity_induction_goal}
  \tilde{P}_k = P_k,\, \tilde{s}_k = s_k\;\text{ and, if $k<|V|$,
  }\;\tilde{\tau}_{k+1} = \tau_{k+1}.
\end{equation}
Denote
\begin{equation*}
  \Delta_v := \tau_k(v,\varphi_v)\;\text{ and }\;\tilde{\Delta}_v :=
  \tilde{\tau}_k(v,\tilde{D}_k(v,\tilde{\varphi}_v)).
\end{equation*}
These sequences need not be equal. However, they satisfy certain
relations as the following lemma clarifies.
\begin{lem}\label{lem:Delta_x_and_tilde_rel}
We have $\tilde{\Delta}_{P_k} = \Delta_{P_k}$ and
$\tilde{\Delta}_v\ge \tilde \Delta_{P_k}$ for all $v\in
V\setminus\{\tilde{P}_1,\ldots,\tilde{P}_{k-1}\}$. In addition, for
each $v\in V\setminus\{\tilde{P}_1,\ldots,\tilde{P}_{k-1}\}$ for
which $\tilde{\Delta}_v= \tilde\Delta_{P_k}$ we
  have $\Delta_v = \Delta_{P_k}$.
\end{lem}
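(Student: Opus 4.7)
My plan is to exploit the induction hypothesis fully. By \eqref{eq:injectivity_induction_hyp} we know $\{\tilde{P}_1,\ldots,\tilde{P}_{k-1}\} = \{P_1,\ldots,P_{k-1}\}$ and $\tilde{\tau}_k = \tau_k$, so every quantity defining $\tilde{\Delta}_v$ can be rewritten in terms of the forward algorithm's data. The proof then splits into three pieces: the identity at $P_k$, the inequality for the remaining vertices, and the equality case.

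For the identity $\tilde{\Delta}_{P_k} = \Delta_{P_k}$, I would use that by \eqref{eq:T_plus_def} and the definition of $s_k$,
\[
\tilde{\varphi}_{P_k} = \varphi_{P_k} + s_k = \varphi_{P_k} + \tau_k(P_k,\varphi_{P_k}) = \varphi_{P_k} + \tilde{\tau}_k(P_k,\varphi_{P_k}).
\]
Lemma~\ref{lem:invertibility_in_inverse_alg_def} then gives $\tilde{D}_k(P_k,\tilde{\varphi}_{P_k}) = \varphi_{P_k}$, and hence $\tilde{\Delta}_{P_k} = \tilde{\tau}_k(P_k,\varphi_{P_k}) = \Delta_{P_k}$.

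For the inequality, fix $v \in V \setminus \{P_1,\ldots,P_{k-1}\}$ and write $v = P_j$ for some $j \ge k$. Then $\tilde{\varphi}_v = \varphi_v + s_j$, and by \eqref{eq:tau_k_non_increasing} together with the definition $s_j = \tau_j(v,\varphi_v)$,
\[
\tilde{\tau}_k(v,\varphi_v) = \tau_k(v,\varphi_v) \ge \tau_j(v,\varphi_v) = s_j.
\]
Adding $\varphi_v$ to both sides yields $\varphi_v + \tilde{\tau}_k(v,\varphi_v) \ge \tilde{\varphi}_v$. Since the map $h \mapsto h + \tilde{\tau}_k(v,h)$ is strictly increasing onto $\R$ by Lemma~\ref{lem:invertibility_in_inverse_alg_def}, this forces $\tilde{D}_k(v,\tilde{\varphi}_v) \le \varphi_v$. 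Applying the second identity of Lemma~\ref{lem:invertibility_in_inverse_alg_def},
\[
\tilde{\Delta}_v = \tilde{\tau}_k(v,\tilde{D}_k(v,\tilde{\varphi}_v)) = \tilde{\varphi}_v - \tilde{D}_k(v,\tilde{\varphi}_v) \ge \tilde{\varphi}_v - \varphi_v = s_j \ge s_k = \Delta_{P_k} = \tilde{\Delta}_{P_k},
\]
where the last inequality uses \eqref{eq:s_k_non_decreasing}.

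For the equality statement, suppose $\tilde{\Delta}_v = \tilde{\Delta}_{P_k}$. Then equality must hold throughout the preceding chain, so $s_j = s_k$ and $\tilde{D}_k(v,\tilde{\varphi}_v) = \varphi_v$. The latter gives $\tilde{\tau}_k(v,\varphi_v) = \tilde{\varphi}_v - \varphi_v = s_j = s_k$, hence $\Delta_v = \tau_k(v,\varphi_v) = s_k = \Delta_{P_k}$, as required. There is no real obstacle here; the only point requiring care is consistently using the induction hypothesis $\tilde{\tau}_k = \tau_k$ together with monotonicity of $\tau_\ell(v,\cdot)$ in $\ell$ from Lemma~\ref{lem:increasingSeq}, and invoking strict monotonicity from Lemma~\ref{lem:invertibility_in_inverse_alg_def} to pass from additive inequalities to inequalities for $\tilde{D}_k$.
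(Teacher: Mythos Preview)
Your proof is correct and follows essentially the same approach as the paper's own proof: both establish $\tilde{\Delta}_{P_k}=\Delta_{P_k}$ via $\tilde{D}_k(P_k,\tilde{\varphi}_{P_k})=\varphi_{P_k}$, then for $v=P_j$ with $j\ge k$ use the monotonicity \eqref{eq:tau_k_non_increasing} to get $\tilde{D}_k(v,\tilde{\varphi}_v)\le\varphi_v$ and hence $\tilde{\Delta}_v\ge s_j\ge s_k$, and finally read off the equality case from equality in this chain. The arguments are essentially identical up to notation (your $j$ is the paper's $m$).
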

Comparing the definitions of $P_k, s_k$ and $\tau_{k+1}$ with those
of $\tilde{P}_k, \tilde{s}_k$ and $\tilde{\tau}_{k+1}$ and using
\eqref{eq:injectivity_induction_hyp} and \eqref{eq:T_plus_def} we
deduce from the lemma that \eqref{eq:injectivity_induction_goal}
holds, completing the inductive proof.
\begin{proof}[Proof of Lemma~\ref{lem:Delta_x_and_tilde_rel}]
  Let us first show that $\tilde{\Delta}_{P_k} = \Delta_{P_k}$. By
  \eqref{eq:T_plus_def} and \eqref{eq:injectivity_induction_hyp},
  \begin{equation*}
    \tilde{\varphi}_{P_k} = \varphi_{P_k} + s_k = \varphi_{P_k} + \tau_k(P_k,\varphi_{P_k}) = \varphi_{P_k} +
    \tilde{\tau}_k(P_k,\varphi_{P_k}).
  \end{equation*}
  Thus $\tilde{D}_k(P_k, \tilde{\varphi}_{P_k}) = \varphi_{P_k}$ and
  hence, using \eqref{eq:injectivity_induction_hyp} again,
  \begin{equation*}
  \tilde{\Delta}_{P_k} = \tilde{\tau}_k(P_k, \tilde{D}_k(P_k, \tilde{\varphi}_{P_k})) = \tau_k(P_k, \varphi_{P_k}) = \Delta_{P_k}.
  \end{equation*}

  Now fix some $v\in V\setminus\{\tilde{P}_1,\ldots,\tilde{P}_{k-1}\}$ and let
us show that $\tilde{\Delta}_v\ge
\tilde{\Delta}_{P_k}=\Delta_{P_k}$. By
\eqref{eq:injectivity_induction_hyp}, $v\in
V\setminus\{P_1,\ldots,P_{k-1}\}$ so that $v = P_m$ for some $m\ge
k$. Hence we may write
\begin{equation}\label{eq:varphi_s_m}
\tilde{\varphi}_v = \varphi_v + s_m.
\end{equation}
By \eqref{eq:tau_k_non_increasing} we have
\begin{equation*}
\Delta_v = \tau_k(v,\varphi_v) \ge \tau_m(v,\varphi_v) = s_m.
\end{equation*}
Thus, using \eqref{eq:injectivity_induction_hyp} again,
\begin{equation*}
  \varphi_v + \tilde{\tau}_k(v, \varphi_v) = \varphi_v + \tau_k(v, \varphi_v) = \varphi_v +
  \Delta_v \ge \varphi_v + s_m = \tilde{\varphi}_v.
\end{equation*}
Hence, since $\tilde{D}_k(v,\cdot)$ is increasing by
Lemma~\ref{lem:invertibility_in_inverse_alg_def}, we conclude that
\begin{equation}\label{eq:rshift_D_varphi}
\tilde{D}_k(v,\tilde{\varphi}_v) \le \varphi_v.
\end{equation}
Consequently, by \eqref{eq:varphi_s_m} and
Lemma~\ref{lem:invertibility_in_inverse_alg_def},
\begin{equation}\label{eq:tilde_delta_x_shiftP_m}
  \varphi_v + s_m = \tilde{\varphi}_v = \tilde{D}_k(v,\tilde{\varphi}_v) +
  \tilde{\tau}_k(v,\tilde{D}_k(v,\tilde{\varphi}_v)) = \tilde{D}_k(v,\tilde{\varphi}_v) + \tilde{\Delta}_v \le \varphi_v + \tilde{\Delta}_v.
\end{equation}
It follows that $\tilde{\Delta}_v\ge s_m$, whence, by
\eqref{eq:s_k_non_decreasing},
\begin{equation}\label{eq:tilde_Delta_x_Delta_P_k}
  \tilde{\Delta}_v \ge s_m \ge s_k = \Delta_{P_k}
\end{equation}
as we wanted to prove.

Lastly, suppose that equality holds in
\eqref{eq:tilde_Delta_x_Delta_P_k}. It follows that equality holds
also in \eqref{eq:tilde_delta_x_shiftP_m} and hence in
\eqref{eq:rshift_D_varphi}. Thus, using
\eqref{eq:injectivity_induction_hyp}, $\Delta_{P_k} =
\tilde{\Delta}_{v} =
\tilde{\tau}_k(v,\tilde{D}_k(v,\tilde{\varphi}_{v}))) =
\tau_k(v,\varphi_{v}) = \Delta_{v}$, as required.
\end{proof}

\subsubsection{Surjectivity}
In this section we prove \eqref{eq:surjectivity_of_T+}, showing that
$T^+$ is onto. The proof is similar to the proof that $T^+$ is
one-to-one as given in the previous section.

The proof requires the following lemma, an analog of
Lemma~\ref{lem:increasingSeq} for $T^+$.
\begin{lem}
\label{lem:increasingSeq_inverse} For any $\tilde{\varphi}\in\R^V$
we have
\begin{gather}
\tau(v) \equiv \tilde{\tau}_1(v,\cdot)\ge \tilde{\tau}_2(v,\cdot)\ge
\cdots\ge
\tilde{\tau}_{|V|}(v,\cdot)\ge 0,\quad v\in V,\label{eq:tau_k_non_increasing_inverse}\\
\tilde{s}_k\in[0,\tau(\tilde{P}_k)],\quad 1\le k\le |V|,\label{eq:s_k_range_inverse}\\
\tilde{s}_1\le \tilde{s}_2\le \cdots\le
\tilde{s}_{|V|}.\label{eq:s_k_non_decreasing_inverse}
\end{gather}
\end{lem}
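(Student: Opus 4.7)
The plan is to mirror the three-part argument of Lemma~\ref{lem:increasingSeq}, the extra twist being that the inverse algorithm reads $\tilde{\tau}_k$ off at the shifted argument $\tilde{D}_k(v,\tilde{\varphi}_v)$, which itself varies with $k$. The two workhorses I would use throughout are the basic positivity $m_{v,h,t}\ge t$ recorded in \eqref{eq:m_fcn_positivity_prop} and the strict monotonicity of $h\mapsto h+\tilde{\tau}_k(v,h)$ guaranteed by Lemma~\ref{lem:invertibility_in_inverse_alg_def}.

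For \eqref{eq:tau_k_non_increasing_inverse} and the non-negativity of $\tilde{s}_k$, I would run a simultaneous induction on $k$. Assuming $\tilde{s}_j\ge 0$ for $j<k$, each update \eqref{eq:algorithm_update_tau_inv} takes the minimum of $\tilde{\tau}_j(v,h)$ and $m_{v,\tilde{\varphi}_{\tilde{P}_j}-\tilde{s}_j,\tilde{s}_j}(h)$, both non-negative, so $\tilde{\tau}_k(v,\cdot)\ge 0$ and the chain of inequalities in \eqref{eq:tau_k_non_increasing_inverse} extends up to index $k$. Consequently $\tilde{s}_k=\tilde{\tau}_k(\tilde{P}_k,\tilde{D}_k(\tilde{P}_k,\tilde{\varphi}_{\tilde{P}_k}))\ge 0$, closing the induction. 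The upper bound $\tilde{s}_k\le \tau(\tilde{P}_k)$ in \eqref{eq:s_k_range_inverse} is then immediate from $\tilde{\tau}_k\le \tilde{\tau}_1\equiv \tau$.

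The main obstacle is \eqref{eq:s_k_non_decreasing_inverse}. Unlike in the forward case, one cannot merely invoke $m\ge t$ at the fixed point $\tilde{\varphi}_{\tilde{P}_{k+1}}$, because $\tilde{s}_{k+1}$ is evaluated at $\tilde{D}_{k+1}(\tilde{P}_{k+1},\tilde{\varphi}_{\tilde{P}_{k+1}})$, not at $\tilde{D}_k(\tilde{P}_{k+1},\tilde{\varphi}_{\tilde{P}_{k+1}})$. To bridge this gap I would set $v:=\tilde{P}_{k+1}$, $h^{\ast}:=\tilde{\varphi}_v-\tilde{s}_k$ and $\tilde{h}_k:=\tilde{D}_k(v,\tilde{\varphi}_v)$. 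The minimality in step~\eqref{alg:P_k_choice_inv} at stage $k$ gives $\tilde{s}_k\le \tilde{\tau}_k(v,\tilde{h}_k)=\tilde{\varphi}_v-\tilde{h}_k$, i.e.\ $\tilde{h}_k\le h^{\ast}$. Strict monotonicity of $h\mapsto h+\tilde{\tau}_k(v,h)$ then yields $h^{\ast}+\tilde{\tau}_k(v,h^{\ast})\ge \tilde{h}_k+\tilde{\tau}_k(v,\tilde{h}_k)=\tilde{\varphi}_v$, so $\tilde{\tau}_k(v,h^{\ast})\ge \tilde{s}_k$. Combined with $m_{v,\tilde{\varphi}_{\tilde{P}_k}-\tilde{s}_k,\tilde{s}_k}(h^{\ast})\ge \tilde{s}_k$, the update rule \eqref{eq:algorithm_update_tau_inv} produces $\tilde{\tau}_{k+1}(v,h^{\ast})\ge \tilde{s}_k$ in both branches (the case $v\not\sim\tilde{P}_k$ being trivial).

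I would then conclude by observing that $h^{\ast}+\tilde{\tau}_{k+1}(v,h^{\ast})\ge \tilde{\varphi}_v$, while by definition of $\tilde{D}_{k+1}$ the equality $\tilde{D}_{k+1}(v,\tilde{\varphi}_v)+\tilde{\tau}_{k+1}(v,\tilde{D}_{k+1}(v,\tilde{\varphi}_v))=\tilde{\varphi}_v$ holds. Strict monotonicity of $h\mapsto h+\tilde{\tau}_{k+1}(v,h)$ therefore forces $\tilde{D}_{k+1}(v,\tilde{\varphi}_v)\le h^{\ast}=\tilde{\varphi}_v-\tilde{s}_k$, whence $\tilde{s}_{k+1}=\tilde{\varphi}_v-\tilde{D}_{k+1}(v,\tilde{\varphi}_v)\ge \tilde{s}_k$, establishing \eqref{eq:s_k_non_decreasing_inverse} and completing the lemma.
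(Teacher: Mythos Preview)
Your proof is correct and follows essentially the same approach as the paper's. The only cosmetic difference is that the paper isolates the equivalence $\tilde{\tau}_k(v,\tilde{D}_k(v,a))\ge b \Longleftrightarrow \tilde{\tau}_k(v,a-b)\ge b$ as a standalone tool and invokes it twice (once for $\tilde{\tau}_k$ and once for $\tilde{\tau}_{k+1}$), whereas you unpack that equivalence inline each time via the monotonicity of $h\mapsto h+\tilde{\tau}_k(v,h)$; the underlying logic is identical.
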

\begin{proof}
The proof of \eqref{eq:tau_k_non_increasing_inverse} and
\eqref{eq:s_k_range_inverse} follows in exactly the same way as the
proof of Lemma~\ref{lem:increasingSeq} with $(\tilde{P}_k)$,
$(\tilde{s}_k)$ and $(\tilde{\tau}_k)$ replacing $(P_k)$, $(s_k)$
and $(\tau_k)$.

It remains to prove \eqref{eq:s_k_non_decreasing_inverse}. We start
by showing that
\begin{equation}
\tilde{\tau}_k(v,\tilde{D}_k(v,a))\ge b\;\;\Longleftrightarrow\;\;
\tilde{\tau}_k(v, a-b)\ge b,\quad v\in V,\, 1\le k\le V,\,
a,b\in\R.\label{eq:tilde_k_inequality}
\end{equation}
To verify this, observe that by
Lemma~\ref{lem:invertibility_in_inverse_alg_def},
$\tilde{\tau}_k(v,\tilde{D}_k(v,a))\ge b$ is equivalent to
$\tilde{D}_k(v,a)\le a-b$ which, by definition of $\tilde{D}_k$ and
Lemma~\ref{lem:invertibility_in_inverse_alg_def} (the fact that $\tilde{D}_k(v,\cdot)$ is increasing), is equivalent to
$a\le a-b + \tilde{\tau}_k(v,a-b)$, as required.

Now let $1\le k<|V|$. Our choice of the point $\tilde{P}_k$ in step
\eqref{alg:P_k_choice_inv} of the inverse addition algorithm ensures
that
\begin{equation*}
  \tilde{s}_k=\tilde{\tau}_k(\tilde{P}_k, \tilde{D}_k(\tilde{P}_k,\tilde{\varphi}_{\tilde{P}_k}))\le \tilde{\tau}_k(\tilde{P}_{k+1},
  \tilde{D}_k(\tilde{P}_{k+1},\tilde{\varphi}_{\tilde{P}_{k+1}})).
\end{equation*}
We conclude by \eqref{eq:tilde_k_inequality} that
\begin{equation}\label{eq:tilde_s_k_small}
  \tilde{s}_k \le \tilde{\tau}_k(\tilde{P}_{k+1},
  \tilde{\varphi}_{\tilde{P}_{k+1}} - \tilde{s}_k).
\end{equation}
The definition \eqref{eq:m_alternative_def} of $m$
implies that
\begin{equation}\label{eq:tilde_s_k_smaller_than_m}
  \tilde{s}_k \le m_{\tilde{P}_{k+1}, \tilde{\varphi}_{\tilde{P}_{k}} - \tilde{s}_{k}, \tilde{s}_k}(h),\quad\text{for all $h\in\R$}.
\end{equation}
Putting together \eqref{eq:tilde_s_k_small} and
\eqref{eq:tilde_s_k_smaller_than_m} and recalling
\eqref{eq:algorithm_update_tau_inv} yields
\begin{equation*}
  \tilde{s}_k \le \tilde{\tau}_{k+1}(\tilde{P}_{k+1},
  \tilde{\varphi}_{\tilde{P}_{k+1}} - \tilde{s}_k),
\end{equation*}
whence, by \eqref{eq:tilde_k_inequality} again,
\begin{equation*}
  \tilde{s}_k \le \tilde{\tau}_{k+1}(\tilde{P}_{k+1},
  \tilde{D}_{k+1}(\tilde{P}_{k+1},
  \tilde{\varphi}_{\tilde{P}_{k+1}})) = \tilde{s}_{k+1}.
\end{equation*}
As $k$ is arbitrary, this establishes
\eqref{eq:s_k_non_decreasing_inverse}.
\end{proof}

Fix $\tilde{\varphi}\in\R^V$. Let $\{P_k\}, \{s_k\}, \{\tau_k\},
\{\tilde{P}_k\}, \{\tilde{s}_k\}, \{\tilde{\tau}_k\},
\{\tilde{D}_k\}$, $1\le k\le |V|$, be the sequences generated when
calculating $T^+(\varphi)$ with
$\varphi:=(T^+)^{-1}(\tilde{\varphi})$ and when calculating
$(T^+)^{-1}(\tilde{\varphi})$. To show that $T^+$ is onto it
suffices, by \eqref{eq:T_plus_def} and
\eqref{eq:T_plus_inverse_def}, to show that
\begin{equation*}
  P_k = \tilde{P}_k,\; s_k = \tilde{s}_k,\; \tau_k = \tilde{\tau}_k,\quad 1\le k\le |V|.
\end{equation*}
We prove this claim by induction. We have $\tau_1 = \tilde{\tau}_1$
by the initialization steps of the algorithms. Fix $1\le k\le |V|$
and assume that
\begin{equation}\label{eq:surjectivity_induction_hyp}
  P_j = \tilde{P}_j,\, s_j = \tilde{s}_j\;\text{ for $1\le j<k$ and
  }\;  \tau_j = \tilde{\tau}_j\;\text{ for $1\le j\le k$}.
\end{equation}
We need only show that
\begin{equation}\label{eq:surjectivity_induction_goal}
  P_k = \tilde{P}_k,\, s_k = \tilde{s}_k\;\text{ and, if $k<|V|$,
  }\;\tau_{k+1} = \tilde{\tau}_{k+1}.
\end{equation}
Denote
\begin{equation*}
  \Delta_v := \tau_k(v,\varphi_v)\;\text{ and }\;\tilde{\Delta}_v :=
  \tilde{\tau}_k(v,\tilde{D}_k(v,\tilde{\varphi}_v)).
\end{equation*}
As in the previous section, these sequences satisfy certain
relations as the following lemma clarifies.
\begin{lem}\label{lem:Delta_x_and_tilde_rel_onto}
We have $\Delta_{\tilde{P}_k} = \tilde{\Delta}_{\tilde{P}_k}$ and
$\Delta_v\ge \Delta_{\tilde{P}_k}$ for all $v\in
V\setminus\{P_1,\ldots,P_{k-1}\}$. In addition, for each $v\in
V\setminus\{P_1,\ldots,P_{k-1}\}$ for which $\Delta_v =
\Delta_{\tilde{P}_k}$ we have $\tilde{\Delta}_v=
\tilde\Delta_{\tilde{P}_k}$.
\end{lem}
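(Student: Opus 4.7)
The plan is to mirror the strategy of Lemma~\ref{lem:Delta_x_and_tilde_rel}, swapping the roles of the sequences produced by $T^+$ (now computed from $\varphi := (T^+)^{-1}(\tilde{\varphi})$) and by the inverse algorithm, and drawing the monotonicity input from Lemma~\ref{lem:increasingSeq_inverse} in place of Lemma~\ref{lem:increasingSeq}. The key structural ingredient that makes the surjectivity direction run smoothly is the following observation: for any $1\le m\le|V|$, the defining relation $\varphi_{\tilde{P}_m} = \tilde{\varphi}_{\tilde{P}_m} - \tilde{s}_m$ from \eqref{eq:T_plus_inverse_def}, together with the definition of $\tilde{s}_m$ and Lemma~\ref{lem:invertibility_in_inverse_alg_def}, forces
\[
  \tilde{D}_m(\tilde{P}_m,\tilde{\varphi}_{\tilde{P}_m}) = \varphi_{\tilde{P}_m} \quad\text{and hence}\quad \tilde{\tau}_m(\tilde{P}_m,\varphi_{\tilde{P}_m}) = \tilde{s}_m.
\]
This lets one evaluate $\tilde{\tau}_m$ at the point $\varphi_v$ directly, which is the main simplification compared with the injectivity argument.

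Specializing this observation to $m=k$ gives $\tilde{\Delta}_{\tilde{P}_k} = \tilde{\tau}_k(\tilde{P}_k,\varphi_{\tilde{P}_k})$, and applying the induction hypothesis $\tau_k = \tilde{\tau}_k$ from \eqref{eq:surjectivity_induction_hyp} identifies this with $\tau_k(\tilde{P}_k,\varphi_{\tilde{P}_k}) = \Delta_{\tilde{P}_k}$, proving the first claim.

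For the inequality, fix $v\in V\setminus\{P_1,\ldots,P_{k-1}\}$; by \eqref{eq:surjectivity_induction_hyp} this set coincides with $V\setminus\{\tilde{P}_1,\ldots,\tilde{P}_{k-1}\}$, so $v=\tilde{P}_m$ for some $m\ge k$. Combining the displayed observation with the monotonicity \eqref{eq:tau_k_non_increasing_inverse} in the iteration index, the ordering \eqref{eq:s_k_non_decreasing_inverse} of the $\tilde{s}_j$, and the induction hypothesis one obtains the chain
\[
  \Delta_v = \tau_k(v,\varphi_v) = \tilde{\tau}_k(v,\varphi_v) \ge \tilde{\tau}_m(v,\varphi_v) = \tilde{s}_m \ge \tilde{s}_k = \tilde{\Delta}_{\tilde{P}_k} = \Delta_{\tilde{P}_k}.
\]
If equality holds throughout then $\tilde{s}_m = \tilde{s}_k$ and $\tilde{\tau}_k(v,\varphi_v) = \tilde{s}_k$, so $\varphi_v + \tilde{\tau}_k(v,\varphi_v) = (\tilde{\varphi}_v - \tilde{s}_m) + \tilde{s}_k = \tilde{\varphi}_v$, and Lemma~\ref{lem:invertibility_in_inverse_alg_def} yields $\tilde{D}_k(v,\tilde{\varphi}_v) = \varphi_v$. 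Hence $\tilde{\Delta}_v = \tilde{\tau}_k(v,\varphi_v) = \tilde{s}_k = \tilde{\Delta}_{\tilde{P}_k}$, establishing the third claim.

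I do not expect a serious obstacle. The only point calling for care is to remember that the iterations $\tilde{\tau}_m$, $\tilde{D}_m$ and $\tilde{s}_m$ depend only on $\tilde{\varphi}$ and so may be used freely even for $m>k$, whereas the induction hypothesis is required to convert between $\tau_k$ and $\tilde{\tau}_k$ (and is unavailable for larger indices). It is precisely this asymmetry that makes the monotonicity \eqref{eq:tau_k_non_increasing_inverse} of the inverse algorithm---rather than \eqref{eq:tau_k_non_increasing}, which would be the naive analogue of the injectivity proof---the correct input in this direction.
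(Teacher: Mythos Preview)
Your proof is correct and follows essentially the same approach as the paper's own proof: both establish $\varphi_{\tilde P_m}=\tilde D_m(\tilde P_m,\tilde\varphi_{\tilde P_m})$ from \eqref{eq:T_plus_inverse_def} and Lemma~\ref{lem:invertibility_in_inverse_alg_def}, then run the chain $\Delta_v=\tau_k(v,\varphi_v)=\tilde\tau_k(v,\varphi_v)\ge\tilde\tau_m(v,\varphi_v)=\tilde s_m\ge\tilde s_k=\tilde\Delta_{\tilde P_k}$ via \eqref{eq:tau_k_non_increasing_inverse} and \eqref{eq:s_k_non_decreasing_inverse}, and finally read off $\tilde D_k(v,\tilde\varphi_v)=\varphi_v$ in the equality case. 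Your closing remark about why the inverse-algorithm monotonicity \eqref{eq:tau_k_non_increasing_inverse} is the right input here (rather than \eqref{eq:tau_k_non_increasing}) is exactly the point.
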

Comparing the definitions of $P_k, s_k$ and $\tau_{k+1}$ with those
of $\tilde{P}_k, \tilde{s}_k$ and $\tilde{\tau}_{k+1}$ and using
\eqref{eq:surjectivity_induction_hyp} and
\eqref{eq:T_plus_inverse_def} we deduce from the lemma that
\eqref{eq:surjectivity_induction_goal} holds, completing the
inductive proof.
\begin{proof}[Proof of Lemma~\ref{lem:Delta_x_and_tilde_rel_onto}]
  Let us first show that $\Delta_{\tilde{P}_k} = \tilde{\Delta}_{\tilde{P}_k}$. By
  \eqref{eq:T_plus_inverse_def} and Lemma~\ref{lem:invertibility_in_inverse_alg_def},
  \begin{equation*}
    \varphi_{\tilde{P}_k} = \tilde{\varphi}_{\tilde{P}_k} - \tilde{s}_k = \tilde{\varphi}_{\tilde{P}_k} - \tilde{\tau}_k(\tilde{P}_k,\tilde{D}_k(\tilde{P}_k,\tilde{\varphi}_{\tilde{P}_k})) =
    \tilde{D}_k(\tilde{P}_k,\tilde{\varphi}_{\tilde{P}_k}).
  \end{equation*}
  Thus, using \eqref{eq:surjectivity_induction_hyp},
  \begin{equation*}
  \Delta_{\tilde{P}_k} = \tau_k(\tilde{P}_k, \varphi_{\tilde{P}_k}) = \tilde{\tau}_k(\tilde{P}_k, \tilde{D}_k(\tilde{P}_k, \tilde{\varphi}_{\tilde{P}_k})) = \tilde{\Delta}_{\tilde{P}_k}.
  \end{equation*}

  Now fix some $v\in V\setminus\{P_1,\ldots,P_{k-1}\}$ and let
us show that $\Delta_v\ge
\Delta_{\tilde{P}_k}=\tilde{\Delta}_{\tilde{P}_k}$. By
\eqref{eq:surjectivity_induction_hyp}, $v\in
V\setminus\{\tilde{P}_1,\ldots,\tilde{P}_{k-1}\}$ so that $v =
\tilde{P}_m$ for some $m\ge k$. Hence we may write, using
Lemma~\ref{lem:invertibility_in_inverse_alg_def},
\begin{equation}\label{eq:varphi_s_m_onto}
\varphi_v = \tilde{\varphi}_v - \tilde{s}_m = \tilde{\varphi}_v -
\tilde{\tau}_m(v, \tilde{D}_m(v,\tilde{\varphi}_v)) =
\tilde{D}_m(v,\tilde{\varphi}_v).
\end{equation}
Consequently, by \eqref{eq:surjectivity_induction_hyp},
\eqref{eq:tau_k_non_increasing_inverse} and
\eqref{eq:s_k_non_decreasing_inverse},
\begin{equation}\label{eq:delta_v_tilde_delta_inequality}
\Delta_v = \tau_k(v,\varphi_v) = \tilde{\tau}_k(v,\varphi_v) \ge
\tilde{\tau}_m(v,\varphi_v) =
\tilde{\tau}_m(v,\tilde{D}_m(v,\tilde{\varphi}_v)) = \tilde{s}_m \ge
\tilde{s}_k = \tilde{\Delta}_{\tilde{P}_k},
\end{equation}
as we wanted to show.

Finally, suppose that equality holds in
\eqref{eq:delta_v_tilde_delta_inequality}. Then, in particular,
$\tilde{s}_m = \tilde \tau_k(v,\varphi_v)$, which, by
\eqref{eq:T_plus_inverse_def}, implies that
\begin{equation*}
  \tilde{\varphi}_v = \varphi_v + \tilde\tau_k(v,\varphi_v).
\end{equation*}
The definition of $\tilde{D}_k$ now yields
\begin{equation*}
  \tilde{D}_k(v,\tilde{\varphi}_v) = \varphi_v,
\end{equation*}
from which we conclude that
\begin{equation*}
  \tilde{\Delta}_{\tilde{P}_k} = \tilde{\tau}_k(v,\varphi_v) =
  \tilde{\tau}_k(v,\tilde{D}_k(v,\tilde{\varphi}_v)) =
  \tilde{\Delta}_v,
\end{equation*}
completing the proof.
\end{proof}

\subsection{The shifts produced by the algorithm}
Our goal in this section is to analyze the shifts produced by the
addition algorithm of Section~\ref{sec:description_of_T^+} and to
give conditions under which $T^+(\varphi)_v - \varphi_v$ is
approximately equal to $\tau(v)$.
Corollary~\ref{cor:addition_lower_bound} verifies property
\eqref{property:addition_lower_bound} from
Section~\ref{sec:addition_algorithm_prop} for $T^+$.

Recall from Section~\ref{sec:addition_algorithm_prop} that
$\EC(\varphi)$ is the subgraph of edges on which $\varphi$ changes
by at least $1-\eps$, that $r(\varphi, v)$ is the radius of the
connected component of $v$ in $\EC(\varphi)$ and $M(\varphi)$ is the
diameter of the largest connected component of $\EC(\varphi)$.
Recall also the definitions of $\tau'(v,k)$ and $L(\tau,\eps)$. Depending on the choice of $\tau$ and $\eps$ the value of
$L(\tau,\eps)$ may be negative, though our theorems will be
meaningful only when this is not the case. The following is the main
proposition of this section.
\begin{prop}\label{prop:bounding_shifts}
  For any $\varphi\in\R^V$ satisfying $M(\varphi)\le
  L(\tau,\eps)$ we have
  \begin{equation*}
    \tau(v) - \tau'(v,r(\varphi, v))\le T^+(\varphi)_v - \varphi_v\le \tau(v)\quad\text{
    for all $v\in V$.}
  \end{equation*}
\end{prop}
The definitions of $M(\varphi)$ and $L(\tau,\eps)$ imply the
following corollary.
\begin{cor}\label{cor:addition_lower_bound}
For any $\varphi\in\R^V$ satisfying $M(\varphi)\le L(\tau,\eps)$ we
have
\begin{equation*}
    \tau(v) - \frac{\eps}{2}\le T^+(\varphi)_v - \varphi_v\le \tau(v)\quad\text{
    for all $v\in V$.}
  \end{equation*}
\end{cor}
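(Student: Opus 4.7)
The plan is to observe that this corollary is essentially a direct packaging of Proposition~\ref{prop:bounding_shifts} once one translates the hypothesis $M(\varphi)\le L(\tau,\eps)$ into a bound on $\tau'(v,r(\varphi,v))$. The upper bound $T^+(\varphi)_v-\varphi_v\le\tau(v)$ is already part of the proposition's conclusion, so the only real content is to argue that the lower bound $\tau(v)-\tau'(v,r(\varphi,v))$ appearing in the proposition is in fact at least $\tau(v)-\eps/2$.

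To this end I would first unwind the definition of $r(\varphi,v)$ in \eqref{def:rAndm} and observe that, since $v\xleftrightarrow{\EC(\varphi)}v$ and any vertex $w$ witnessing $r(\varphi,v)$ lies in the same $\EC(\varphi)$-component as $v$, we have $r(\varphi,v)\le M(\varphi)$ for every $v\in V$. Combined with the hypothesis this gives $r(\varphi,v)\le L(\tau,\eps)$.

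Next I would use the definition \eqref{eq:L_tau_eps_def} of $L(\tau,\eps)$, which by its \emph{maximum} clause ensures that $\tau'(v,L(\tau,\eps)+1)\le\eps/2$ for every $v\in V$, together with the fact that $\tau'(v,k)$ is monotone non-decreasing in $k$ (immediate from \eqref{eq:tau_prime_def}, since the maximum is taken over a larger set as $k$ grows). Since $r(\varphi,v)\le L(\tau,\eps)<L(\tau,\eps)+1$, monotonicity yields $\tau'(v,r(\varphi,v))\le\eps/2$ for all $v\in V$.

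Plugging this bound into the conclusion of Proposition~\ref{prop:bounding_shifts} produces the desired two-sided estimate and finishes the proof. There is no genuine obstacle here: all of the difficulty is hidden in Proposition~\ref{prop:bounding_shifts} itself, which analyses the addition algorithm and controls how far the requested shift $\tau(v)$ can be eroded by the update rule \eqref{eq:algorithm_update_tau} at neighbors; the present corollary only needs to check that the subcriticality assumption on $\EC(\varphi)$ is strong enough to turn the distance-dependent bound $\tau'(v,r(\varphi,v))$ into the uniform constant $\eps/2$.
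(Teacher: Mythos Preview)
Your proof is correct and matches the paper's approach: the paper simply states that the corollary follows from Proposition~\ref{prop:bounding_shifts} together with the definitions of $M(\varphi)$ and $L(\tau,\eps)$, and what you have written is exactly the unpacking of that remark. The key observations you identify --- that $r(\varphi,v)\le M(\varphi)$, that $\tau'(v,\cdot)$ is monotone in its second argument, and that $\tau'(v,L(\tau,\eps)+1)\le\eps/2$ by definition --- are precisely what is needed.
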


\begin{proof}[Proof of Proposition~\ref{prop:bounding_shifts}]
Fix $\varphi\in\R^V$ and let $(P_k)$, $(s_k)$ and $(\tau_k)$ be the
outputs of the addition algorithm of
Section~\ref{sec:description_of_T^+} when running on the input
$\varphi$. For $v\in V$, let $k_v$ stand for that integer for which
$v=P_{k_v}$ and let
\begin{equation*}
\sigma_v:=T^+(\varphi)_v - \varphi_v = s_{k_v}
\end{equation*}
be the amount added to $\varphi_v$ by $T^+$.

The relation $\sigma_v\le \tau(v)$ holds without any assumptions, by
Lemma~\ref{lem:increasingSeq}, proving the upper bound in
Proposition~\ref{prop:bounding_shifts}. Recalling
\eqref{eq:EC_connectivity_def}, define $\EC(\varphi,v):=\left\{ w\in
V:w\xleftrightarrow{\EC(\varphi)}v\right\}$ for $v\in V$. We say
that $v$ is the first vertex visited in $\EC(\varphi,v)$ if $k_v\le
k_u$ for all $u\in \EC(\varphi,v)$. The lower bound in
Proposition~\ref{prop:bounding_shifts} is a consequence of the
following fact: For any $v\in V$,
\begin{equation} \label{eq:toProveXYZ}
    \sigma_v\geq \tau(v) - \tau'(v,r(\varphi,v))\text{ and if }v\text{ is the first vertex visited in }\EC(\varphi,v)\text{ then }  \sigma_v = \tau(v).
\end{equation}
We prove (\ref{eq:toProveXYZ}) by induction on $k_{v}$. Fix $v\in
V$. Suppose first that $v$ is the first vertex visited in
$\EC(\varphi,v)$. If all neighbors $u$ of $v$ have $k_u>k_v$ (in
particular, if $k_v = 1$) then the definition of the addition
algorithm implies that $\sigma_{v}=\tau(v)$ and
(\ref{eq:toProveXYZ}) follows. Otherwise, let $u$ be a neighbor of
$v$ with $k_{u}<k_{v}$ and note that necessarily $u\not\in
\EC(\varphi,v)$ by our assumption on $v$. Now, the induction
hypothesis \eqref{eq:toProveXYZ}, definitions
\eqref{eq:tau_prime_def}, \eqref{eq:L_tau_eps_def} and
\eqref{def:rAndm} and our assumption that $M(\varphi)\le
L(\tau,\eps)$ yield that
\begin{align*}
\tau(v)-\sigma_{u} & \leq\tau(v)-(\tau(u)-\tau'(u,r(\varphi,u))) \leq\tau'(v,r(\varphi,u)+1)\\
 & \leq \tau'(v,M(\varphi)+1) \leq \tau'(v,L(\tau,\epsilon)+1)\leq\frac{\epsilon}{2}.
\end{align*}
This, together with $|\varphi_{v}-\varphi_{u}|<1-\epsilon$ and
\eqref{eq:m_alternative_def}, imply that in step
\ref{alg:update_step} of the addition algorithm, when $k=k_u$, we
have $m_{v,\varphi_u,\sigma_{u}}(\varphi_{v})=\tau(v)$ so that
$\tau_{k_u+1}(v,\varphi_v) = \tau_{k_u}(v,\varphi_v)$. As $u$ is an
arbitrary neighbor of $v$ with $k_u<k_v$ we conclude that
$\sigma_{v}=\tau(v)$ as required in \eqref{eq:toProveXYZ}.

Now suppose that $v$ is not the first vertex visited in
$\EC(\varphi,v)$. Let $u$ be the vertex of $\EC(\varphi,v)$ with
minimal $k_{u}$. Clearly, $k_{u}<k_{v}$ and by the induction
hypothesis \eqref{eq:toProveXYZ}, $\sigma_{u}=\tau(u)$. Thus, Lemma
\ref{lem:increasingSeq} and \eqref{def:rAndm} yield that
$\sigma_{v}\geq\sigma_{u}=\tau(u)\geq\tau(v)-\tau'(v,r(\varphi,v))$,
finishing the proof of \eqref{eq:toProveXYZ}.
\end{proof}

\subsection{Jacobian definition} In this section we find a formula for the
Jacobian of the mapping $T^+$. We start with some smoothness
properties of the functions used in defining $T^+$. We write
$(P_k)$, $(s_k)$ and $(\tau_k)$ for the outputs of the addition
algorithm of Section~\ref{sec:description_of_T^+} when running on
the input $\varphi$.
\begin{lem}\label{lem:right_derivatives_and_Lipschitz_constants}
  For any $\varphi\in\R^V$, $1\le k\le |V|$ and $v\in V$, the
  function $\tau_k(v,\cdot)$ is everywhere differentiable from the right and is Lipschitz continuous with Lipschitz
  constant at most $\frac{1}{2}$.
\end{lem}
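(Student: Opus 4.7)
The plan is to prove both assertions simultaneously by induction on $k$. For the base case $k=1$, the function $\tau_1(v,\cdot)\equiv\tau(v)$ is constant, hence trivially right-differentiable everywhere and Lipschitz with constant $0\le \tfrac12$.

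For the inductive step, I would invoke the recursive definition \eqref{eq:algorithm_update_tau}. Either $\tau_{k+1}(v,\cdot)=\tau_k(v,\cdot)$, in which case the inductive hypothesis gives the conclusion directly, or
\[
  \tau_{k+1}(v,h)=\min\bigl(\tau_k(v,h),\,m_{v,\varphi_{P_k},s_k}(h)\bigr).
\]
In this second case I would first note that by its definition \eqref{eq:m_alternative_def} and the piecewise-linear definition \eqref{eq:bump_function_def} of $f$, the function $m_{v,\varphi_{P_k},s_k}(\cdot)$ is piecewise affine in $h$, hence everywhere right-differentiable, and it has Lipschitz constant at most $\tfrac12$ by \eqref{eq:m_Lipschitz_constant}. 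Combined with the inductive hypothesis, it remains only to check that the pointwise minimum of two functions with these two properties inherits them.

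For the Lipschitz bound this is a standard argument: given $x<y$, if the same function attains the minimum at both endpoints the bound follows at once; otherwise, by continuity, there exists $z\in[x,y]$ with $\tau_k(v,z)=m_{v,\varphi_{P_k},s_k}(z)$, and splitting
\[
  |\tau_{k+1}(v,x)-\tau_{k+1}(v,y)|\le |\tau_{k+1}(v,x)-\tau_{k+1}(v,z)|+|\tau_{k+1}(v,z)-\tau_{k+1}(v,y)|
\]
reduces to the same-minimizer case on each piece and yields the factor $\tfrac12$. For right-differentiability at a point $h_0$: if $\tau_k(v,h_0)\ne m_{v,\varphi_{P_k},s_k}(h_0)$ then continuity ensures $\tau_{k+1}(v,\cdot)$ coincides with one of the two functions on a right-neighbourhood of $h_0$ and we are done; if equality holds, writing both summands as their right-derivative expansions and using $\min(a+o(t),b+o(t))=\min(a,b)+o(t)$ as $t\downarrow 0$ gives the right derivative $\min\bigl(\partial^+_h\tau_k(v,h_0),\,\partial^+_h m_{v,\varphi_{P_k},s_k}(h_0)\bigr)$.

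No step looks to be a serious obstacle; the only mildly delicate point is the right-differentiability of $\min(f,g)$ at coincidence points with equal right-derivatives, which is handled by the $o(t)$-expansion above. The entire lemma is essentially a propagation-of-regularity statement: the two operations the algorithm performs on $\tau_k(v,\cdot)$ — leaving it alone, or taking the minimum with a $\tfrac12$-Lipschitz piecewise-affine function — both preserve the class of everywhere right-differentiable $\tfrac12$-Lipschitz functions.
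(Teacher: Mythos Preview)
Your proposal is correct and follows essentially the same approach as the paper: both argue that $\tau_k(v,\cdot)$ is obtained from the constant $\tau(v)$ by repeatedly taking a pointwise minimum with functions $m_{v,h,t}(\cdot)$, and that right-differentiability and the $\tfrac12$-Lipschitz bound are preserved under this operation. The paper phrases this non-inductively (simply observing that $\tau_k(v,\cdot)$ is a minimum of finitely many piecewise-linear functions with slopes at most $\tfrac12$, whence the conclusion is immediate), while you structure the same idea as an induction on $k$ and supply an explicit $o(t)$ argument for right-differentiability of the minimum; the content is the same.
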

\begin{proof}
  The function $\tau_k(v,\cdot)$ is defined by taking a pointwise
  minimum of the constant function $\tau(v)$ and functions of the
  form $m_{w,h,t}(\cdot)$ for various values of the parameters $w,h$
  and $t$. The lemma follows by noting that both $\tau(v)$ and $m_{w,h,t}(\cdot)$ are everywhere differentiable from the
  right and Lipschitz continuous with Lipschitz constant at most
  $\frac{1}{2}$ (see \eqref{eq:m_alternative_def} and \eqref{eq:m_Lipschitz_constant}) and these properties are preserved under taking pointwise
  minimum (it follows, in fact, that $\tau_k(v,\cdot)$ is piecewise linear with all
  slopes of size at most $\frac{1}{2}$).
\end{proof}

Let $J^+:\R^V\to(0,\infty)^V$ be defined by
\begin{equation}\label{eq:J_plus_def}
  J^+(\varphi):=\prod_{k=1}^{|V|} \left(1 + \partial_2\tau_k(P_k,
  \varphi_{P_k})\right)
\end{equation}
where the notation $\partial_2\tau_k(P_k, \varphi_{P_k})$ stands for
the right derivative of $\tau_k$ with respect to its second variable
(which exists by
Lemma~\ref{lem:right_derivatives_and_Lipschitz_constants}),
evaluated at $(P_k, \varphi_{P_k})$.
Lemma~\ref{lem:right_derivatives_and_Lipschitz_constants} ensures also
that the factors in the product are positive.

Recall the definition of the partition $V_0, V_1$ of $V$ and the
measure $d\mu_\theta$ from \eqref{eq:V_0_V_1_partition} and
\eqref{eq:Lebesgue_with_delta_measure_properties_section}.
\begin{lem}\label{lem:changeOfVariables}
For any $\theta:V_0\to\R$ and any function $g:\R^V\to\R$ integrable
with respect to $d\mu_\theta$ the function
$g(T^+(\varphi))J^+(\varphi) $ is integrable with respect to
$d\mu_\theta$ and
\begin{equation}\label{eq:Jacobian_formula}
\int g(T^+(\varphi))J^+(\varphi) d\mu_\theta(\varphi) = \int
g(\varphi) d\mu_\theta(\varphi).
\end{equation}
\end{lem}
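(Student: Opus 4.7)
The plan is to establish \eqref{eq:Jacobian_formula} by partitioning $\R^V$ (up to a null set) into pieces on which the vertex ordering $(P_k(\varphi))$ produced by the addition algorithm is constant, and then applying the classical change-of-variables formula on each piece using the triangular structure of $T^+$. First, I would dispose of the coordinates in $V_0$. For $v\in V_0$ we have $\tau(v)=0$, and \eqref{eq:tau_k_non_increasing} gives $0\le \tau_k(v,\cdot)\le \tau(v)=0$; thus $\tau_k(v,\cdot)\equiv 0$, which implies both that the shift applied at $v$ vanishes (so $T^+(\varphi)_v=\varphi_v$, i.e., $T^+$ preserves the affine subspace $\{\varphi_v=\theta_v,\,v\in V_0\}$ where $\mu_\theta$ is supported) and that the corresponding factor $1+\partial_2\tau_{k_v}(v,\varphi_v)$ in $J^+$ equals $1$. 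Hence it suffices to treat the case $V_0=\emptyset$ with $d\mu_\theta$ equal to Lebesgue measure on $\R^V$.

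For each total ordering $\pi=(v_1,\dots,v_N)$ of $V$, set
\begin{equation*}
  A_\pi:=\{\varphi\in\R^V:P_k(\varphi)=v_k\text{ for all }1\le k\le N\}.
\end{equation*}
The complement of $\bigcup_\pi A_\pi$ is the locus where two distinct vertices achieve the same value of $\tau_k(\cdot,\varphi_\cdot)$ at some step, which is a finite union of piecewise linear subvarieties of codimension at least $1$ and so has Lebesgue measure zero. On $A_\pi$, the update rule \eqref{eq:algorithm_update_tau} shows inductively that $\tau_k(v,h)$ depends only on $\varphi_{v_1},\dots,\varphi_{v_{k-1}}$ and $h$, so $s_k=\tau_k(v_k,\varphi_{v_k})$ is a function of $\varphi_{v_1},\dots,\varphi_{v_k}$ alone. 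Consequently the map $T^+|_{A_\pi}$, with coordinates listed in the order $v_1,\dots,v_N$, is lower triangular, and the diagonal entry of its differential at the $v_k$-row is
\begin{equation*}
  \frac{\partial}{\partial\varphi_{v_k}}\bigl(\varphi_{v_k}+s_k\bigr) \;=\; 1+\partial_2\tau_k(v_k,\varphi_{v_k}),
\end{equation*}
whose product over $k$ is exactly $J^+(\varphi)$. By Lemma~\ref{lem:right_derivatives_and_Lipschitz_constants} each such factor lies in $[\tfrac12,\tfrac32]$, so $J^+$ is bounded and $T^+|_{A_\pi}$ is Lipschitz.

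Since each $\tau_k(v,\cdot)$ is a pointwise minimum of finitely many piecewise linear functions, it is itself piecewise linear; its right derivative $\partial_2\tau_k$ therefore agrees with the classical derivative off a countable set, so by Rademacher's theorem $T^+|_{A_\pi}$ is classically differentiable almost everywhere with Jacobian $J^+$. The classical change-of-variables formula (applied to the Lipschitz injection $T^+|_{A_\pi}$, injectivity being provided by property~\eqref{property:one-to-one_onto}) yields
\begin{equation*}
  \int_{A_\pi} g(T^+(\varphi))\,J^+(\varphi)\,d\lambda(\varphi)=\int_{T^+(A_\pi)}g(\tilde\varphi)\,d\lambda(\tilde\varphi).
\end{equation*}
The analysis in Section~\ref{sec:bijectivitiy} (in particular the fact that $\tilde P_k=P_k$ when the inverse algorithm is run on $T^+(\varphi)$) shows that the images $T^+(A_\pi)$ are pairwise disjoint and cover $\R^V$ up to a null set, so summing over $\pi$ gives the desired equality, and applying the same reasoning to $|g|$ yields the integrability statement. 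The main obstacle is the non-smoothness caused by the right-derivative-only regularity of the $\tau_k(v,\cdot)$; this is overcome by the observation that these functions are piecewise linear with finitely many breakpoints, so Rademacher's theorem applies and the pointwise formula \eqref{eq:J_plus_def} for $J^+$ coincides Lebesgue-a.e. with the classical Jacobian.
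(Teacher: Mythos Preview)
Your approach is essentially the same as the paper's: partition $\R^V$ according to the ordering $(P_k)$ produced by the algorithm, exploit the triangular structure on each piece to identify the Jacobian as $J^+$, and apply the Lipschitz change-of-variables (area) formula piece by piece. Two remarks on the execution.

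First, a minor point: the sets $A_\pi$ already partition all of $\R^V$, since the tie-breaking rule $\preceq$ guarantees the algorithm always outputs a unique ordering. The complement you describe is therefore empty, not merely null; your discussion of the ``tie locus'' is unnecessary.

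Second, and this is where the paper is more careful, the area formula you invoke is stated for a Lipschitz map defined on $\R^V$ (or an open set), not on an arbitrary measurable subset $A_\pi$. You assert that $T^+|_{A_\pi}$ is Lipschitz, which is true, but to apply Rademacher and the area formula rigorously one needs a Lipschitz extension to the ambient space. The paper supplies exactly this: for each fixed ordering $\sigma$ it introduces a globally defined map $T^\sigma$ obtained by running the addition algorithm with the vertex order \emph{forced} to be $\sigma$ regardless of $\varphi$. This $T^\sigma$ is manifestly Lipschitz on all of $\R^V$, agrees with $T^+$ on $A_\sigma$, and has the triangular Jacobian you computed. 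The paper then applies the area formula to $T^\sigma$ with the integrand multiplied by $1_{A_\sigma}$, and finally uses the bijectivity of $T^+$ to reassemble the pieces. Making this global extension explicit is the one missing step in your write-up; once added, your argument and the paper's coincide.
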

We remark that $T^+$ is clearly Borel measurable by its definition in
Section~\ref{sec:description_of_T^+} and hence the integrand on the
left-hand side of \eqref{eq:Jacobian_formula} is measurable. The
rest of the section is devoted to proving this lemma.

We need the following basic facts about Lipschitz continuous maps.
Let $d\ge 1$ be an integer. First, by Rademacher's theorem a
Lipschitz continuous map $T:\R^d\to \R^d$ is almost everywhere
differentiable. Second, the following change of variables formula
holds for any integrable $h: \R^d\to\R$ (see \cite[Section
3.3.3]{Evans:1992fk}),
\begin{equation}
\int h(\varphi)|\!\det(\nabla T(\varphi))|d\varphi=\int
\Bigg[\sum_{\varphi\in
T^{-1}(\psi)}h(\varphi)\Bigg]d\psi,\label{eq:change_of_variables_formula}
\end{equation}
where we have written $d\varphi$ for the Lebesgue measure on $\R^d$.
Here, as remarked in \cite{Evans:1992fk}, $T^{-1}(\psi)$ is at most
countable for almost every $\psi$.

Now, let $\Sigma$ stand for the set of bijections
$\sigma:\{1,\ldots, |V|\}\to V$. For each $\sigma\in\Sigma$ define
the set
\begin{equation}\label{eq:A_sigma_def}
A^{\sigma}:=\{\varphi\in\R^V\colon P_{k}=\sigma(k)\text{ for $1\le
k\le |V|$}\}.
\end{equation}
Referring back to the definition of the addition algorithm in
Section~\ref{sec:description_of_T^+} we see that each $A^\sigma$ is
measurable, possibly empty, and $\R^V = \cup_{\sigma\in\Sigma}
A^\sigma$. For each $\sigma\in\Sigma$ we define a version of the
addition algorithm in which the points are taken in the order
$\sigma$. More precisely, we define an algorithm taking as input a
function $\varphi\in\R^V$ and outputting two sequences indexed by
$1\le k\le |V|$:
\begin{enumerate}
\item A sequence $(s_k^\sigma)\subseteq[0,\infty)$.
\item A sequence $(\tau_k^\sigma)$ of functions, $\tau_k^\sigma:V\times\R\to\R$.
\end{enumerate}
\textbf{Addition algorithm with order $\sigma$:}\\
\noindent \underline{Initialization}. Set $\tau_1^\sigma(v,h):=\tau(v)$ for all $v\in V$ and $h\in\R$.\\
\noindent \underline{Loop}. For $k$ between $1$ and $|V|$ do:
\begin{enumerate}
\item \label{alg:s_k__sigma_def} Set $s_k^\sigma:=\tau_k^\sigma(\sigma(k),\varphi_{\sigma(k)})$.
\item \label{alg:update_step_sigma} If $k<|V|$ set, for each $v\in V$ and
$h\in\R$,
\begin{equation}\label{eq:algorithm_update_tau_sigma}
\tau_{k+1}^\sigma(v,h):=\begin{cases} \tau_{k}^\sigma(v,h) & \text{
if $v\in\{\sigma(1),\ldots, \sigma(k)\}$ or $v\not \sim
\sigma(k)$}\\
\min(\tau_{k}^\sigma(v,h), m_{v,\varphi_{\sigma(k)},s_k^\sigma}(h))
& \text{ if $v\notin\{\sigma(1),\ldots, \sigma(k)\}$ and }v\sim
\sigma(k)
\end{cases}.
\end{equation}
\end{enumerate}
We then define a mapping $T^\sigma:\R^V\to\R^V$ by
\begin{equation}\label{eq:T_sigma_def}
  T^\sigma(\varphi) :=
\tilde{\varphi}^\sigma\;\;\text{ with
}\;\;\tilde{\varphi}^\sigma_{\sigma(k)} := \varphi_{\sigma(k)} +
s_k^\sigma,\quad 1\le k\le |V|.
\end{equation}
Comparing the definitions of $T^+$ and $T^\sigma$ we conclude that
\begin{equation}\label{eq:T^+_on_A^sigma}
  T^+(\varphi) =
T^\sigma(\varphi),\, s_k = s_k^\sigma\text{ and }\tau_k =
\tau_k^\sigma\;\text{ for $\varphi\in A^\sigma$}.
\end{equation}

Fix a $\theta:V_0\to\R$ and let
\begin{equation*}
  X:=\{\varphi\in\R^V\colon \forall v\in V_0,\,\varphi_v =
  \theta_v\}.
\end{equation*}
Observe that $T^+$ maps $X$ bijectively onto $X$ by properties
\eqref{property:one-to-one_onto} and
\eqref{property:maximal_increment} (see Section
\ref{sec:addition_algorithm_prop}) and the definition of $V_0$. The
measure $d\mu_\theta$ is supported on $X$; identifying $X$ with
$\R^{V_1}$ in the natural way it coincides with the Lebesgue measure
on $X$.


By \eqref{eq:m_alternative_def}, the function  $m_{v,h,t}(h')$ is
Lipschitz continuous as a function of $h,t$ and $h'$, for every
fixed $v$. In addition, the composition and pointwise minimum of
Lipschitz continuous functions is also Lipschitz continuous. It
follows that for every $v$ and $k$, the function
$\tau_k^\sigma(v,h)$ is Lipschitz continuous as a function of $h$
and $\varphi$ (i.e., as an implicit function of $\varphi_w$ for
every $w\in V$). We thus deduce from the definition of $s_k^\sigma$
and \eqref{eq:T_sigma_def} that $T^\sigma$ is a Lipschitz continuous
map. We also note that $T^\sigma$ maps $X$ into $X$ since
\begin{equation}\label{eq:tau_k_sigma_on_V_0}
\tau_k^\sigma(v,\cdot)\equiv0\quad\text{for all $v\in V_0$},
\end{equation}
as follows by induction on $k$ using the fact that $m_{v,h,t}\ge t$
by \eqref{eq:m_alternative_def}. Thus we may apply the formula
\eqref{eq:change_of_variables_formula} (by identifying $X$ with
$\R^{V_1}$ and $d\mu_\theta$ with the Lebesgue measure on
$\R^{V_1}$) to obtain that
\begin{equation}\label{eq:T_sigma_change_of_variables}
  \int_X h(\varphi)|\!\det(\nabla_{V_1} T^\sigma(\varphi))|d\mu_\theta(\varphi)=\int_X \Bigg[\sum_{\varphi\in
  (T^\sigma)^{-1}(\psi)}h(\varphi)\Bigg]d\mu_\theta(\psi)
\end{equation}
for every $\sigma\in \Sigma$ and $h:X\to\R$ integrable with respect
to $d\mu_\theta$. Here and below, we denote by $\nabla_W T^\sigma$,
$W\subseteq V$, the matrix-valued function
\begin{equation*}
\nabla_W T^\sigma(\varphi):=\Big(\frac{\partial
T^\sigma(\varphi)_v}{\partial \varphi_w}\Big)_{v,w\in W}.
\end{equation*}
We continue to find a formula for $|\!\det(\nabla_{V_1}
T^\sigma(\varphi))|$. We note first that $\nabla_{V_1}
T^\sigma(\varphi)$ exists for $d\mu_\theta$-almost every $\varphi\in
X$ as, by the above discussion, $T^\sigma$ is Lipschitz continuous
from $X$ to $X$. By construction of $T^\sigma$, $\nabla_V T^\sigma$
has a triangular form when its rows and columns are sorted in the
order of $\sigma$. Hence the definition of $s_k^\sigma$,
\eqref{eq:T_sigma_def} and \eqref{eq:tau_k_sigma_on_V_0} yield that
for $d\mu_\theta$-almost every $\varphi\in X$ we have
\begin{equation}\label{eq:T_sigma_Jacobian_on_V_1_and_V}
|\!\det(\nabla_{V_1} T^{\sigma}(\varphi))| = \prod_{\substack{1\le
k\le |V|\\
\sigma(k)\in
V_1}}\left|1+\partial_{2}\tau_k^\sigma(\sigma(k),\varphi_{\sigma(k)})\right|
=
\prod_{k=1}^{|V|}\left|1+\partial_{2}\tau_k^\sigma(\sigma(k),\varphi_{\sigma(k)})\right|.
\end{equation}
Now let $h:\R^V\to\R$ be a function integrable with respect to
$d\mu_\theta$ and define
\begin{equation*}
  h^\sigma(\varphi):=h(\varphi) 1_{(\varphi\in
A^{\sigma})},\quad\sigma\in\Sigma.
\end{equation*}
Putting together \eqref{eq:J_plus_def}, the fact that $J^+\ge 0$,
\eqref{eq:A_sigma_def}, \eqref{eq:T^+_on_A^sigma},
\eqref{eq:T_sigma_Jacobian_on_V_1_and_V}
and \eqref{eq:T_sigma_change_of_variables} we have
\begin{equation*}
\begin{split}
  \int_X h(\varphi)J^+(\varphi) d\mu_\theta(\varphi) &= \sum_{\sigma\in
  \Sigma} \int_X h^\sigma(\varphi)\prod_{k=1}^{|V|} \left|1 + \partial_2\tau_k(P_k,
  \varphi_{P_k})\right| d\mu_\theta(\varphi) =\\
  &= \sum_{\sigma\in\Sigma} \int_X
h^\sigma(\varphi)\prod_{k=1}^{|V|} \left|1 +
\partial_2\tau_k^\sigma(\sigma(k),
  \varphi_{\sigma(k)})\right| d\mu_\theta(\varphi) =\\
  &= \sum_{\sigma\in\Sigma} \int_X
h^\sigma(\varphi)|\!\det(\nabla_{V_1} T^{\sigma}(\varphi))|
d\mu_\theta(\varphi) = \sum_{\sigma\in\Sigma}\int_X
\Bigg[\sum_{\varphi\in
(T^\sigma)^{-1}(\psi)}h^\sigma(\varphi)\Bigg]d\mu_\theta(\psi).
  \end{split}
\end{equation*}
Finally, $T^+$ is invertible by Section~\ref{sec:bijectivitiy} and
$T^+ = T^\sigma$ on $A^\sigma$ by \eqref{eq:T^+_on_A^sigma}. Hence
$T^\sigma$ restricted to $A^\sigma$ is one-to-one. Thus, since
$h^\sigma(\varphi)=0$ when $\varphi\notin A^\sigma$, we may continue
the last equality to obtain
\begin{equation*}
  \int_X h(\varphi)J^+(\varphi) d\mu_\theta(\varphi) = \sum_{\sigma\in\Sigma}\int_X h^\sigma((T^+)^{-1}(\psi)) d\mu_\theta(\psi) = \int_X h((T^+)^{-1}(\psi))
  d\mu_\theta(\psi).
\end{equation*}
This equality is obtained for any $h:\R^V\to\R$ integrable with
respect to $d\mu_\theta$. Letting $g:\R^V\to\R$ be integrable with
respect to $d\mu_\theta$, Lemma~\ref{lem:changeOfVariables} now
follows by substituting $h$ with $g(T^+(\varphi))$. Formally, this
is done by using the above equality to approximate $g(T^+(\varphi))$
with $h$ which are integrable with respect to $d\mu_\theta$.

\subsection{Properties of $T^-$} The relation
\eqref{eq:T_+_T_-_relation} defines a mapping $T^-:\R^V\to\R^V$ by
\begin{equation}\label{eq:T_minus_def}
  T^-(\varphi):=2\varphi - T^+(\varphi).
\end{equation}
In this section we establish that $T^-$ satisfies similar properties
to those proved for $T^+$, as claimed in
Section~\ref{sec:addition_algorithm_prop}.

In this section, to emphasize the dependence on $\varphi$, we write
$(P_k^\varphi)$, $(s_k^\varphi)$ and $(\tau_k^\varphi)$ for the
outputs of the addition algorithm of
Section~\ref{sec:description_of_T^+} when running on the input
$\varphi$. Putting together \eqref{eq:T_plus_def} and
\eqref{eq:T_minus_def} we see that
\begin{equation}\label{eq:T_minus_second_def}
  T^-(\varphi) =
\tilde{\varphi}\;\;\text{ with }\;\;\tilde{\varphi}_{P_k^\varphi} :=
\varphi_{P_k^\varphi} - s_k^\varphi,\quad 1\le k\le |V|.
\end{equation}
We claim that, due to the symmetry of the function $f$ of
\eqref{eq:bump_function_def},
\begin{equation}\label{eq:T_minus_third_def}
  T^-(\varphi) = -T^+(-\varphi)\quad\text{ for all $\varphi\in\R^V$}.
\end{equation}
To see this observe first that the symmetry of $f$ and
\eqref{eq:m_alternative_def} imply
\begin{equation*}
  m_{v,-h,t}(-h') = m_{v,h,t}(h')\quad\text{ for all $v\in V$ and
  $h,h',t\in\R$}.
\end{equation*}
Thus, examining the addition algorithm of
Section~\ref{sec:description_of_T^+} we conclude that
\begin{equation}\label{eq:addition_algorithm_outputs_for_minus_phi}
  P_k^{-\varphi} = P_k^{\varphi}, s_k^{-\varphi} =
  s_k^{\varphi}\text{ and }\tau_k^{-\varphi}(v,-h) =
  \tau_k^{\varphi}(v,h)\quad\text{ for all $1\le k\le |V|$, $v\in V$, $h\in\R$ and
  $\varphi\in\R^V$}.
\end{equation}
Together with \eqref{eq:T_plus_def}, this equality implies
\eqref{eq:T_minus_third_def}.

Now, the fact that $T^-$ satisfies properties
\eqref{property:one-to-one_onto},
\eqref{property:maximal_increment},
\eqref{property:Lipschitz_preservation} and
\eqref{property:addition_lower_bound} in
Section~\ref{sec:addition_algorithm_prop} follows immediately from
\eqref{eq:T_minus_def}, \eqref{eq:T_minus_third_def} and the fact
that $T^+$ satisfies these properties. We now show that $T^-$ also
satisfies \eqref{eq:Jacobian_formula_properties_section}. Define
$J^-:\R^V\to(0,\infty)^V$ by
\begin{equation}\label{eq:J_minus_def}
  J^-(\varphi):=\prod_{k=1}^{|V|} \left(1 - \partial_2\tau_k^\varphi(P_k^\varphi,
  \varphi_{P_k^\varphi})\right),
\end{equation}
analogously to \eqref{eq:J_plus_def}. Observe that $J^-(\varphi) =
J^+(-\varphi)$ by \eqref{eq:T_minus_def} and
\eqref{eq:addition_algorithm_outputs_for_minus_phi}. Recall the
definition of the measure $d\mu_\theta$ from
\eqref{eq:Lebesgue_with_delta_measure_properties_section}. Using
\eqref{eq:T_minus_third_def} and the equality
\eqref{eq:Jacobian_formula_properties_section} for $T^+$ we have for
every $\theta:V_0\to\R$ and every $g:R^V\to[0,\infty)$, integrable
with respect to $d\mu_\theta$,
\begin{equation*}
\begin{split}
  \int &g(T^-(\varphi))J^-(\varphi) d\mu_\theta(\varphi) = \int
  g(-T^+(-\varphi))J^+(-\varphi) d\mu_\theta(\varphi) = \\
  &= \int g(-T^+(\varphi))J^+(\varphi)
  d\mu_{-\theta}(\varphi) =\int g(-\varphi) d\mu_{-\theta}(\varphi) = \int g(\varphi) d\mu_\theta(\varphi).
\end{split}
\end{equation*}

We remark that the symmetry of the function $f$ of
\eqref{eq:bump_function_def}, while essential for establishing
\eqref{eq:T_minus_third_def}, is not necessary for establishing the
properties of $T^-$ described in
Section~\ref{sec:description_of_T^+}. These properties may also be
obtained without using \eqref{eq:T_minus_third_def} by repeating the
proofs used for $T^+$.

\subsection{The geometric average of the Jacobians}
In this section we provide an estimate for the geometric average of
the Jacobians $J^+$ and $J^-$ in terms of the connectivity
properties of the subgraph $\EC(\varphi)$ and the Lipschitz
properties of the function $\tau$. This estimate establishes
property~\eqref{property:Jacobian_estimate} from
Section~\ref{sec:addition_algorithm_prop}.
\begin{lem}
\label{lem:lowerDenistyBound}For any $\varphi\in\R^V$ satisfying
$M(\varphi)\le L(\tau,\eps)$ we have
\begin{equation*}
\sqrt{J^+(\varphi)J^-(\varphi)} \ge
\exp\left(-\frac{1}{\eps^2}\sum_{v\in V} \tau'\left(v,1+\max_{w\sim
v}r(\varphi,w)\right)^2\right).
\end{equation*}
\end{lem}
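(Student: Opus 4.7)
The first step is to unpack $\sqrt{J^+(\varphi)J^-(\varphi)}$. Since the addition algorithm outputs appearing in definitions \eqref{eq:J_plus_def} and \eqref{eq:J_minus_def} are the same (only the sign of $\partial_2 \tau_k$ differs between $J^+$ and $J^-$),
\begin{equation*}
\sqrt{J^+(\varphi)J^-(\varphi)} \;=\; \prod_{k=1}^{|V|}\sqrt{1 - \bigl(\partial_2 \tau_k(P_k,\varphi_{P_k})\bigr)^2}.
\end{equation*}
By Lemma~\ref{lem:right_derivatives_and_Lipschitz_constants} each right derivative lies in $[-\tfrac12,\tfrac12]$, so each factor in the product is positive. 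The elementary inequality $\sqrt{1-a^2}\ge e^{-a^2}$, valid on $|a|\le\tfrac12$ (check the sign of $(1-a^2) - e^{-2a^2}$ as a function of $a^2\in[0,\tfrac14]$), reduces the claim to the pointwise bound
\begin{equation*}
\bigl|\partial_2 \tau_k(P_k,\varphi_{P_k})\bigr| \;\le\; \frac{1}{\eps}\,\tau'\!\left(P_k,\,1+\max_{w\sim P_k} r(\varphi,w)\right),
\end{equation*}
for every $k$, after which squaring and re-indexing $(P_k)$ as $v\in V$ yields the lemma.

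To obtain the pointwise bound, I would analyze the right derivative via the update rule \eqref{eq:algorithm_update_tau}. A simple induction on $k$ shows that $\tau_k(P_k,\cdot)$ equals the pointwise minimum of the constant $\tau(P_k)$ and finitely many functions $m_{P_k,\varphi_{P_j},s_j}(\cdot)$ ranging over $j<k$ with $P_j\sim P_k$. The right derivative of such a finite minimum of Lipschitz piecewise-linear functions agrees with the right derivative of whichever piece is minimal in a right neighborhood of $\varphi_{P_k}$, hence is bounded in absolute value by the supremal slope of the constituents. From \eqref{eq:m_alternative_def} and \eqref{eq:bump_function_def} the absolute slope of $m_{v,h,t}$ is at most $\min\bigl((\tau(v)-t)_+,\,\eps/2\bigr)/\eps$ (taking value $0$ when $\tau(v)<t$ and $m_{v,h,t}\equiv t$). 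Therefore
\begin{equation*}
\bigl|\partial_2 \tau_k(P_k,\varphi_{P_k})\bigr| \;\le\; \max_{\substack{j<k\\ P_j\sim P_k}} \frac{\min\bigl((\tau(P_k)-s_j)_+,\,\eps/2\bigr)}{\eps}.
\end{equation*}

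The main substantive step is converting the shift lower bound of Proposition~\ref{prop:bounding_shifts} into a bound on $\tau(P_k)-s_j$ for a neighbor $P_j\sim P_k$. Since $M(\varphi)\le L(\tau,\eps)$, that proposition applied at $P_j$ gives $s_j\ge \tau(P_j)-\tau'(P_j,r(\varphi,P_j))$. Unwinding the definition \eqref{eq:tau_prime_def} and using $d_G(P_j,P_k)=1$,
\begin{equation*}
\tau(P_k)-s_j \;\le\; \tau(P_k)-\tau(P_j)+\tau'(P_j,r(\varphi,P_j)) \;\le\; \tau'\bigl(P_k,1+r(\varphi,P_j)\bigr) \;\le\; \tau'\!\left(P_k,1+\max_{w\sim P_k}r(\varphi,w)\right).
\end{equation*}
The assumption $M(\varphi)\le L(\tau,\eps)$ together with \eqref{eq:L_tau_eps_def} forces this last quantity to be at most $\eps/2$, so the $\min$ in the slope bound is achieved at its first argument, and the desired pointwise bound follows. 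The main obstacle is precisely this triangle-style manipulation of $\tau'$ through the graph, coupled with the verification that $M(\varphi)\le L(\tau,\eps)$ places us in the $\eps/2$-regime of the bump $f$; once that is in place, the proof assembles by straightforward algebra.
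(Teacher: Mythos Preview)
Your proof is correct and follows essentially the same route as the paper's: multiply out $J^+J^-$ to get $\prod_k(1-(\partial_2\tau_k(P_k,\varphi_{P_k}))^2)$, use $\log(1-a^2)\ge -2a^2$ on $|a|\le\tfrac12$, bound the slope of $\tau_k(v,\cdot)$ by the maximal slope of the constituent functions $m_{v,\varphi_w,\sigma_w}$, and then invoke Proposition~\ref{prop:bounding_shifts} together with the triangle-type inequality for $\tau'$ to control $\tau(P_k)-s_j$. The one extra remark you make---that under $M(\varphi)\le L(\tau,\eps)$ the $\min$ in the slope bound is attained at its first argument---is true but unnecessary, since $\min\bigl((\tau(P_k)-s_j)_+,\eps/2\bigr)\le(\tau(P_k)-s_j)_+$ already suffices.
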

\begin{proof}
Fix $\varphi\in\R^V$ satisfying $M(\varphi)\le L(\tau,\eps)$. Write
$(P_k)$, $(s_k)$ and $(\tau_k)$ for the outputs of the addition
algorithm of Section~\ref{sec:description_of_T^+} when running on
the input $\varphi$. Denote $\sigma_v := T^+(\varphi)_v - \varphi_v$
for $v\in V$. By \eqref{eq:J_plus_def} and \eqref{eq:J_minus_def} we
get
\begin{equation}\label{eq:hola}
\log\left(\sqrt{J^+(\varphi)J^-(\varphi)}\right)\geq\frac{1}{2}\sum_{k=1}^{|V|}\log\left(1
- \left(\partial_2\tau_k(P_k,
  \varphi_{P_k})\right)^2\right)\ge -\sum_{k=1}^{|V|} \left(\partial_2\tau_k(P_k,
  \varphi_{P_k})\right)^2,
\end{equation}
where we have used that
$|\partial_2\tau_k(P_k,\varphi_{P_k})|\leq1/2$ for all $k$ according
to Lemma~\ref{lem:right_derivatives_and_Lipschitz_constants}.
Examination of the addition algorithm of
Section~\ref{sec:description_of_T^+} reveals that $\tau_k(v,h)$ is
the minimum of $\tau(v)$ and $m_{v, \varphi_w, \sigma_w}(h)$ where
$w$ ranges over a (possibly empty) subset of the neighbors of $v$.
Observing that the Lipschitz constant of $m_{v,h,t}$ is at most
$\max\rbr{\frac{1}{\eps}(\tau(v)-t),0}$ by \eqref{eq:m_alternative_def}, we see
that
\begin{equation}\label{eq:tau_k_derivative_estimate}
  |\partial_2\tau_k(v, h)|\le \max\left(\frac{1}{\eps}\left(\tau(v) -
  \min_{w\sim v} \sigma_w\right), 0\right).
\end{equation}
Now, using our assumption that $M(\varphi)\le L(\tau,\eps)$,
Proposition~\ref{prop:bounding_shifts} yields that
\begin{equation}\label{eq:tau_k_derivative_estimate2}
  \tau(v) - \min_{w\sim v} \sigma_w \le \tau(v) - \min_{w\sim v}
  (\tau(w) - \tau'(w,r(\varphi, w))) \le
  \tau'\left(v, 1+\max_{w\sim v} r(\varphi, w)\right).
\end{equation}
Plugging \eqref{eq:tau_k_derivative_estimate2} into
\eqref{eq:tau_k_derivative_estimate} shows that
\begin{equation*}
  |\partial_2\tau_k(v, h)|\le \frac{1}{\eps}\tau'\left(v, 1+\max_{w\sim v}
  r(\varphi, w)\right).
\end{equation*}
The lemma follows by substituting this estimate in \eqref{eq:hola}.
\end{proof}

\section{Reflection positivity for random surfaces}\label{sec:reflection_positivity}
Recall the random surface measure $\mu_{\T_n^2, \zero, U}$, defined
in \eqref{eq:mu_T_n_2_U_measure_def}, corresponding to a potential
$U$. In this section we estimate the probability that the random
surface has many edges with large slopes.

We start by explaining why the measure $\mu_{\T_n^2, \zero, U}$ is
well-defined under our assumptions.
\begin{lem}\label{lem:measure_well_defined}
The measure $\mu_{\T_n^2, \zero, U}$ is well-defined for any
potential $U$ satisfying condition~\eqref{eq:U_integral_cond}. In
addition, there exists a constant $c(U)>0$ for which
\begin{equation}\label{eq:partition_function_lower_bound}
  Z_{\T_n^2,\zero, U}\ge c(U)^{|V(\T_n^2)|}.
\end{equation}
\end{lem}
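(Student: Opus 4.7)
The plan is to prove finiteness of $Z := Z_{\T_n^2, \zero, U}$ via a spanning-tree change of variables, and the quantitative lower bound \eqref{eq:partition_function_lower_bound} (which also yields $Z > 0$ and hence well-definedness of $\mu_{\T_n^2, \zero, U}$) by a direct restriction to a box of small configurations.

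I would first pick a spanning tree $T$ of $\T_n^2$ rooted at $\zero$ and, for $v \ne \zero$, write $p(v)$ for its parent in $T$. The substitution $\eta_v := \varphi_v - \varphi_{p(v)}$ is a measure-preserving linear bijection of $\R^{V(\T_n^2) \setminus \{\zero\}}$. Under it the tree edges contribute $\sum_{v \ne \zero} U(\eta_v)$ to $\sum_{(v,w) \in E(\T_n^2)} U(\varphi_v - \varphi_w)$, while each of the remaining $|E(\T_n^2)| - |V(\T_n^2)| + 1$ non-tree edges contributes $U$ evaluated at a signed sum of $\eta$'s along the unique tree path between its endpoints. Bounding the non-tree factor pointwise using $\inf_x U(x) > -\infty$ and factorizing the tree integral yields
\[
Z \le \exp\bigl(-(|E(\T_n^2)| - |V(\T_n^2)| + 1) \inf_x U(x)\bigr) \left(\int_\R e^{-U(x)}\, dx\right)^{|V(\T_n^2)| - 1} < \infty
\]
by \eqref{eq:U_integral_cond}.

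For the lower bound I would use \eqref{eq:U_integral_cond} to select $B \in (0, 1/2]$ and $M < \infty$ with $U \le M$ on $[-2B, 2B]$, and then restrict the integration defining $Z$ to the box $\{\varphi : \varphi_v \in [-B, B] \text{ for all } v \ne \zero\}$. On this box every edge difference $\varphi_v - \varphi_w$ lies in $[-2B, 2B]$, so $\sum_{(v,w)} U(\varphi_v - \varphi_w) \le |E(\T_n^2)|\, M = 2 |V(\T_n^2)|\, M$. This gives
\[
Z \ge (2B)^{|V(\T_n^2)| - 1}\, e^{-2 |V(\T_n^2)|\, M} \ge \bigl(2B\, e^{-2M}\bigr)^{|V(\T_n^2)|} =: c(U)^{|V(\T_n^2)|},
\]
where the second inequality uses $2B \le 1$.

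The main obstacle is securing the existence of an interval $[-2B, 2B]$ on which $U$ is bounded. For potentials satisfying \eqref{eq:potential_condition} this is immediate by continuity of $U$ on its effective support; under the more general assumption \eqref{eq:U_integral_cond} alone, $U$ need not be bounded on any interval, and one instead applies a density-point argument to a positive-measure symmetric sub-level set $\{x : U(x) \le M\}$ (which exists since $\int e^{-U} > 0$), replacing the box restriction by a Gibbs variational bound $\log Z \ge -\mathbb{E}_\nu\bigl[\sum_{(v,w)} U(\varphi_v - \varphi_w)\bigr] - H(\nu \mid \mu_\zero)$ with $\nu$ a product probability measure supported on this sub-level set, and then verifying that the per-edge expectations remain finite to recover the same form of bound.
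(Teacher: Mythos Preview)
Your finiteness argument via the spanning-tree change of variables is correct and matches the paper's. Your simple box argument for the lower bound is also fine whenever $U$ is bounded on some neighborhood of the origin, which covers the potentials of condition~\eqref{eq:potential_condition}.

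The gap is in your treatment of the general case~\eqref{eq:U_integral_cond}. You correctly identify that $U$ need not be bounded near $0$, and you mention the density-point idea, but your proposed fix does not work as stated. Taking $\nu$ to be a product measure with marginals supported on the sub-level set $A=\{x:U(x)\le M\}$ gives no control on $U(\varphi_v-\varphi_w)$, because differences of elements of $A$ need not lie in $A$. A concrete obstruction: if $U=0$ on $[-2,-1]\cup[1,2]$ and $U=\infty$ elsewhere (which satisfies~\eqref{eq:U_integral_cond}), then any two independent samples from $A$ yield a difference in $\{U=\infty\}$ with positive probability, so the per-edge expectation $\E_\nu[U(\varphi_v-\varphi_w)]$ is infinite and the variational bound is vacuous. ``Verifying that the per-edge expectations remain finite'' is thus not a routine check but the heart of the matter, and it fails for this choice of $\nu$.

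The paper resolves this by exploiting the bipartite structure of $\T_n^2$ rather than placing all vertices in $A$. After locating a Lebesgue density point $a$ of $A$ with $|[a-2\eps,a+2\eps]\cap A|\ge 3.6\eps$, it restricts the integral to configurations with $\varphi_v\in[-\eps,\eps]$ for $v$ even and $\varphi_v\in[a-\eps,a+\eps]$ for $v$ odd. Every edge joins an even and an odd vertex, so every edge difference lies in $[a-2\eps,a+2\eps]$ (or its negative, handled by $U(-x)=U(x)$), where $U\le M$ on a set of measure at least $3.6\eps$. A leaf-by-leaf integration along a spanning tree then gives the bound $(0.4\eps\,e^{-M})^{|V(\T_n^2)|-1}$. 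Note that the even vertices are not required to sit in $A$ at all; only the \emph{differences} need to. Your variational route could be salvaged by making $\nu$ bipartite in the same way, but the direct restriction argument is simpler.
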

\begin{proof}
  Let $U$ be a potential satisfying condition~\eqref{eq:U_integral_cond}. In order that $\mu_{\T_n^2, \zero, U}$ be well-defined it suffices that
  \begin{equation}\label{eq:Z_def}
    Z_{\T_n^2,\zero, U} = \int \exp\Bigg(-\sum_{(v,w)\in E(\T_n^2)}
  U(\varphi_v - \varphi_w)\Bigg) \delta_0(d\varphi_{\zero})\prod_{v\in V(\T_n^2)\setminus\{\zero\}}
  d\varphi_v
  \end{equation}
  satisfies $0<Z_{\T_n^2,\zero, U}<\infty$.

  We first show that $Z_{\T_n^2,\zero, U}<\infty$. Let $S$ be a spanning tree of
$\T_n^2$, regarded here as a subset of edges. Then
\[
Z_{\T_n^2,\zero, U}\leq C_1(U)^{|E(\T_n^2)\setminus S|}\int
\exp\Bigg(-\sum_{(v,w)\in S}
  U(\psi_v - \psi_w)\Bigg) \delta_0(d\psi_{\zero})\prod_{v\in V(\T_n^2)\setminus\{\zero\}}
  d\psi_v
\]
where $C_1(U)\eqdef \sup_x\, \exp(-U(x))<\infty$ by
\eqref{eq:U_integral_cond}. By integrating the vertices in
$V(\T_n^2)\setminus\{\zero\}$ leaf by leaf according to the spanning
tree $S$ the integral above equals $\left(\int
\exp(-U(x))dx\right)^{|S|}$, which is finite by
\eqref{eq:U_integral_cond}.

  We now prove \eqref{eq:partition_function_lower_bound}, implying in particular that $Z_{\T_n^2,\zero, U}>0$.
  Condition~\eqref{eq:U_integral_cond} implies the existence of some
  $\alpha<\infty$ for which the set $A:=\{x\colon U(x)\le\alpha\}$
  has positive measure. The Lebesgue density theorem now yields the
  existence of a point $a\in A$ and an $\eps>0$ such that
  \begin{equation*}
    |[a-2\eps, a+2\eps] \cap A| \ge 0.9\cdot4\eps,
  \end{equation*}
  where we write $|B|$ for the Lebesgue measure of a set $B\subseteq\R$. This
  implies that
  \begin{equation}\label{eq:difference_in_A_1}
    \inf_{x,y,z,w\in[-\eps,\eps]}|\{u\in [a-\eps,a+\eps]\colon u-x,u-y,u-z,u-w\in
    A\}|\ge 0.4\eps
  \end{equation}
  and, using that $U(x)=U(-x)$, the analogous statement
  \begin{equation}\label{eq:difference_in_A_2}
    \inf_{x,y,z,w\in[a-\eps,a+\eps]}|\{u\in [-\eps,\eps]\colon u-x,u-y,u-z,u-w\in
    A\}|\ge 0.4\eps.
  \end{equation}
  Denote
  by $(V_{\text{even}}, V_{\text{odd}})$ a bipartition of the
  vertices of the bipartite graph $\T_n^2$, with $\zero\in V_{\text{even}}$, and define the following
  set of configurations,
  \begin{equation*}
    \Omega:=\{\varphi:V(\T_n^2)\to\R\colon
    \varphi(V_{\text{even}})\subseteq[-\eps,\eps],
    \varphi(V_{\text{odd}})\subseteq[a-\eps,a+\eps]\}.
  \end{equation*}
  We conclude from the definition of $A$, \eqref{eq:difference_in_A_1} and \eqref{eq:difference_in_A_2} that the integral in \eqref{eq:Z_def}, restricted to
  the set $\Omega$, is at least
  $\left(0.4\eps\exp(-\alpha)\right)^{|V(\T_n^2)\setminus\{\zero\}|}>0$.
  This can be seen by again fixing a spanning tree of
$\T_n^2$ and integrating the vertices in
$V(\T_n^2)\setminus\{\zero\}$ leaf by leaf according to it.

  As a side note we remark that the fact that $\T_n^2$ is bipartite was essential
  for showing that $Z_{\T_n^2,\zero, U}>0$. If $\T_n^2$ is replaced by a triangle
  graph on $3$ vertices then the analogous quantity
  to $Z_{\T_n^2,\zero, U}$ is zero when, say, $\{x\colon
  U(x)<\infty\} = [-3,-2]\cup[2,3]$. However, the above argument can
  be easily modified to work for all graphs if $\{x\colon
  U(x)<\infty\}$ contains an interval around $0$.
\end{proof}

For $0<L<\infty$ and $0<\delta<1$ we say a potential $U$ \emph{has
$(\delta, L)$-controlled gradients on $\T_n^2$} if the following
holds:
\begin{enumerate}
  \item There exists some $K>L$ such that $U(x)<\infty$ for $|x|<K$.
  \item If $\varphi$ is randomly sampled from the measure $\mu_{\T_n^2,\zero,
  U}$ and if we define the random subgraph $\EC(\varphi, L)$ of $\T_n^2$ by
  \begin{equation}\label{eq:E_varphi_L_random_graph}
    \EC(\varphi, L):=\{(v,w)\in E(\T_n^2)\colon |\varphi_v-\varphi_w|\ge
    L\}
  \end{equation}
  then
  \begin{equation*}
    \P(e_1, \ldots, e_k\in \EC(\varphi,L))\le \delta^k\quad\text{for all
    $k\ge 1$ and distinct $e_1,\ldots, e_k\in E(\T_n^2)$}.
  \end{equation*}
\end{enumerate}
\begin{thm}\label{thm:controlledGradients}
Suppose a measurable $U:\R\to(-\infty,\infty]$ satisfies
$U(x)=U(-x)$, condition~\eqref{eq:U_integral_cond} and the
condition:
\begin{equation}\label{eq:interval_potential}
\begin{aligned}
    &\text{Either $U(x)<\infty$ for all $x$ or there exists some $0<K<\infty$ such}\\
    &\text{that $U(x)<\infty$ when $|x|<K$ and
    $U(x)=\infty$ when $|x|>K$}.
\end{aligned}
\end{equation}
Then for any $0<\delta<1$ there exists an $0<L<\infty$ such that for
all $n\ge 1$, $U$ has $(\delta,L)$-controlled gradients on $\T_n^2$.
\end{thm}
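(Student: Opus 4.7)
The plan is to derive the bound $\P(e_1,\ldots,e_k\in\EC(\varphi,L))\le\delta^k$ from reflection positivity of $\mu_{\T_n^2,\zero,U}$ via the chessboard estimate. Since $\EC(\varphi,L)$ depends only on gradients of $\varphi$, one may work with the translation-invariant gradient distribution; reflection positivity of this measure with respect to reflections through planes of vertices on $\T_n^2$ follows from the standard Fr\"ohlich-Simon-Spencer framework, using only the evenness of $U$ and the nearest-neighbor structure of the interaction. For a fixed edge $e_0=(v_0,w_0)$ let $A=\{|\varphi_{v_0}-\varphi_{w_0}|\ge L\}$, and let $\bar A_L$ denote its \emph{disseminated} version: the event that every edge in $\T_n^2$ of the orientation of $e_0$ carries a gradient of magnitude at least $L$, with the signed pattern prescribed by the reflection group. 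The chessboard estimate then yields, for distinct edges $e_1,\ldots,e_k$,
\[
\P(e_1,\ldots,e_k\in\EC(\varphi,L))\;\le\;\P(\bar A_L)^{k/N},
\]
where $N$ is of order $|V(\T_n^2)|$ (two edge orientations can be handled separately and combined by Cauchy-Schwarz at the cost of absolute constants). It therefore suffices to prove that $\P(\bar A_L)^{1/N}\le\delta$ for $L$ sufficiently large, uniformly in $n$.

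I would estimate $\P(\bar A_L)=Z_L/Z_{\T_n^2,\zero,U}$ by combining the lower bound $Z_{\T_n^2,\zero,U}\ge c(U)^{|V(\T_n^2)|}$ from Lemma~\ref{lem:measure_well_defined} with an upper bound on the constrained partition function $Z_L$, obtained by integrating the variables along a spanning tree of $\T_n^2$ leaf by leaf. If $U(x)=\infty$ for $|x|>K$, the disseminated constraint confines each edge gradient to an interval of length at most $K-L$, so each leaf integration contributes at most $(K-L)\sup_{|x|\le K}\exp(-U(x))$, yielding a bound of the form $Z_L\le C(U)^{|V(\T_n^2)|}(K-L)^{|V(\T_n^2)|-1}$. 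If $U$ is finite everywhere, then integrability of $\exp(-U)$ together with dominated convergence gives $\int_{|x|\ge L}\exp(-U(x))\,dx\to 0$ as $L\to\infty$, and leaf-by-leaf integration yields $Z_L\le C(U)^{|V(\T_n^2)|}\bigl(\int_{|x|\ge L}\exp(-U(x))\,dx\bigr)^{|V(\T_n^2)|-1}$. In both regimes $\P(\bar A_L)^{1/N}$ becomes arbitrarily small as $L\to\infty$ (respectively $L\uparrow K$), so $L$ may be chosen depending only on $U$ and $\delta$ to give the required bound.

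The main obstacle I anticipate is the careful set-up of reflection positivity in the presence of the normalization $\delta_0(d\varphi_\zero)$ and the precise description of the disseminated event $\bar A_L$ — in particular, verifying that the signed gradient pattern produced by successive reflections is consistent with the cycle structure of the torus, so that $\bar A_L$ is a nontrivial constraint whose indicator is well approximated by the spanning-tree integration above. The evenness of $U$ and the bipartite structure of $\T_n^2$ make the required sign patterns automatically compatible, but the bookkeeping (especially uniformly over the two edge orientations) is delicate. Once this set-up is in place, the spanning-tree bound on $Z_L$ is a routine consequence of either the finite support of $U$ or the integrability of $\exp(-U)$, and the theorem follows.
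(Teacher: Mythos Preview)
Your approach is essentially the paper's: reflection positivity plus the chessboard estimate reduce the problem to bounding a disseminated event, and that bound comes from comparing a spanning-tree integration of the constrained partition function against the lower bound $Z_{\T_n^2,\zero,U}\ge c(U)^{|V(\T_n^2)|}$ from Lemma~\ref{lem:measure_well_defined}. One correction: with the block-based chessboard on $2\times2$ plaquettes the disseminated event constrains only \emph{half} the edges of a given orientation (those at even rows, say), so your exponent $|V(\T_n^2)|-1$ in the $Z_L$ bound is overstated---the paper handles this by choosing a spanning tree $S$ with $|S\cap H|\ge\tfrac{1}{10}|E(\T_n^2)|$ where $H$ is the constrained edge set, which still suffices since any positive fraction of $|V(\T_n^2)|$ in the exponent yields the required per-vertex decay.
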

This theorem is proved in the following sections, making use of
reflection positivity and the chessboard estimate.

\subsection{Reflection positivity} We start by reviewing the basic definitions
pertaining to our use of reflection positivity and the chessboard estimate. Our treatment is based on \cite[Section 5]{Biskup:2009fk}.

Let $n\ge 1$. For $-n+1\le j\le n$ the vertical plane of reflection
$P_j^{\text{ver}}$ (passing through vertices) is the set of vertices
\begin{equation*}
  P_j^{\text{ver}}:=\{(j,k)\in V(\T_n^2)\colon -n+1\le k\le n\}.
\end{equation*}
The plane $P_j^{\text{ver}}$ divides $\T_n^2$ into two overlapping
parts, $P_j^{\text{ver},+}$ and $P_j^{\text{ver},-}$, according to
\begin{align*}
&P_j^{\text{ver}, +}:=\{(j+m, k)\in V(\T_n^2)\colon 0\le m\le n, -n+1\le k\le n\},\\
&P_j^{\text{ver},-}:=\{(j-m, k)\in V(\T_n^2)\colon 0\le m\le n,
-n+1\le k\le n\},
\end{align*}
where here and below, arithmetic operations on vertices of $T_n^2$
are performed modulo $2n$ (in the set $\{-n+1, -n+2, \ldots, n-1,n\}$). The parts $P_j^{\text{ver},+}$ and
$P_j^{\text{ver},-}$ overlap in
\begin{equation*}
  \bar{P}_j^{\text{ver}}:=P_j^{\text{ver}}\cup\{(j+n,k)\in V(\T_n^2)\colon -n+1\le k\le
  n\}.
\end{equation*}
The reflection $\theta_{P_j^{\text{ver}}}$ is the mapping
$\theta_{P_j^{\text{ver}}}:V(\T_n^2)\to V(\T_n^2)$ defined by
\begin{equation*}
  \theta_{P_j^{\text{ver}}}(\ell,k) = (2j-\ell, k),
\end{equation*}
which exchanges $P_j^{\text{ver},+}$ and $P_j^{\text{ver},-}$. We
also define horizontal planes of reflection $P_j^{\text{hor}}$ and
their associated $P_j^{\text{hor},+}$, $P_j^{\text{hor},-}$,
$\bar{P}_j^{\text{hor}}$ and $\theta_{P_j^{\text{hor}}}$ in the same
manner by switching the role of the two coordinates of vertices in
$\T_n^2$. We write simply $P, P^+, P^-, \bar{P}$ and $\theta_P$ when
the plane of reflection $P$ is one of the planes $P_j^\text{ver}$ or
$P_j^\text{hor}$ which is left unspecified.

Denote by $\F$ the set of all measurable functions
$f:\R^{V(\T_n^2)}\to\R$ satisfying
\begin{equation}\label{eq:translation_invariant_f_def}
f(\varphi) = f(\varphi + c)\quad\text{for all
$\varphi\in\R^{V(\T_n^2)}$ and $c\in\R$}.
\end{equation}
Equivalently, $\F$ is the set of all measurable functions depending
only on the gradient of $\varphi$. For a plane of reflection $P$ we
write $\F_P^+$ (respectively $\F_P^-$) for the set of $f\in\F$ for
which $f(\varphi)$ depends only on $\varphi_v$, $v\in P^+$
(respectively $v\in P^-$). We extend the definition of $\theta_P$ to
act on $\R^{V(\T_n^2)}$ and $\F$ by
\begin{equation*}
  (\theta_P \varphi)_v:=\varphi_{\theta_P(v)}\quad\text{and}\quad(\theta_P f)(\varphi):=f(\theta_P \varphi).
\end{equation*}
When $\varphi$ is randomly sampled from a probability measure on
$\R^{V(\T_n^2)}$ we will regard a function $f\in\F$ as a random
variable (taking the value $f(\varphi)$) and write $\E f$ for its
expectation.
\begin{defn} Let $\varphi$ be randomly sampled from a probability measure $\P$ on $\R^{V(\T_n^2)}$. We say that $\mathbb{P}$ is reflection positive with respect to $\F$ if for any plane
of reflection $P$ and any two bounded $f, g\in \F_P^+$,
\begin{equation}
\mathbb{E}(f\,\theta_P g)=\mathbb{E}(g\,\theta_P
f)\label{eq:reflectionIsCool}
\end{equation}
and
\begin{equation}
\mathbb{E}(f\,\theta_{P}f)\geq0.\label{eq:reflectionIsPositive}
\end{equation}
\end{defn}
We call a function $f\in \F$ a block function at $(j,k)\in
V(\T_n^2)$ if
\begin{equation}\label{eq:block_function_def}
  f(\varphi) =
  f_0(\varphi_{(j,k)},\varphi_{(j+1,k)},\varphi_{(j,k+1)},\varphi_{(j+1,k+1)})
\end{equation}
for some $f_0:\R^4\to\R$. For $t=(t_1,t_2)\in V(\T_n^2)$ we define a
reflection operator $\vartheta_t$ acting on block functions as
follows. If $f$ is a block function at $(j,k)\in V(\T_n^2)$ then
$\vartheta_t f$ is the function obtained from $f$ by performing the
reflections which map the block at $(j,k)$ to the block at $(j+t_1,
k+t_2)$. Explicitly, if $f$ is defined by
\eqref{eq:block_function_def} then $\vartheta_t f$ is the block
function at $(j+t_1, k+t_2)$ defined by
\begin{equation}\label{eq:reflection_operator_def}
(\vartheta_t f)(\varphi):=\begin{cases}
    f_0(\varphi_{(j+t_1,k+t_2)},\varphi_{(j+t_1+1,k+t_2)},\varphi_{(j+t_1,k+t_2+1)},\varphi_{(j+t_1+1,k+t_2+1)})&\text{$t_1, t_2$ even}\\
    f_0(\varphi_{(j+t_1+1,k+t_2)},\varphi_{(j+t_1,k+t_2)},\varphi_{(j+t_1+1,k+t_2+1)},\varphi_{(j+t_1,k+t_2+1)})&\text{$t_1$ odd, $t_2$ even}\\
    f_0(\varphi_{(j+t_1,k+t_2+1)},\varphi_{(j+t_1+1,k+t_2+1)},\varphi_{(j+t_1,k+t_2)},\varphi_{(j+t_1+1,k+t_2)})&\text{$t_1$ even, $t_2$ odd}\\
    f_0(\varphi_{(j+t_1+1,k+t_2+1)},\varphi_{(j+t_1,k+t_2+1)},\varphi_{(j+t_1+1,k+t_2)},\varphi_{(j+t_1,k+t_2)})&\text{$t_1, t_2$ odd}\\
  \end{cases}.
\end{equation}

\begin{thm} (Chessboard estimate)
\label{thm:checkboard} Let $\varphi$ be randomly sampled from a
probability measure $\P$ on $\R^{V(\T_n^2)}$. Suppose that $\P$ is
reflection positive with respect to $\F$. Then for any $1\le m\le
|V(\T_n^2)|$, any $f_1,\ldots, f_m$, bounded block functions at
$(0,0)$, and any distinct $t_1,\ldots, t_m\in V(\T_n^2)$ we have
\begin{equation}\label{eq:chessboard_estimate}
  \left|\E\left(\prod_{i=1}^m \vartheta_{t_i}
  f_i\right)\right|^{|V(\T_n^2)|}\le \prod_{i=1}^m
  \E\left(\prod_{t\in \T_n^2} \vartheta_t f_i\right).
\end{equation}
In particular, the right-hand side is non-negative.
\end{thm}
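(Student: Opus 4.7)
The plan is to derive the theorem from a more symmetric statement. Label the vertices of $\T_n^2$ as $t_1, \ldots, t_{|V|}$ and, for any bounded block functions $F_1, \ldots, F_{|V|}$ at $(0,0)$, set
\[
\Psi(F_1, \ldots, F_{|V|}) := \E\Bigl(\prod_{i=1}^{|V|} \vartheta_{t_i} F_i\Bigr).
\]
The theorem follows by taking $F_i = f_i$ for $i=1,\ldots,m$ and $F_i \equiv 1$ otherwise (noting $\Psi(1,\ldots,1)=1$) from the master inequality
\[
|\Psi(F_1, \ldots, F_{|V|})|^{|V|} \le \prod_{i=1}^{|V|} \Psi(F_i, F_i, \ldots, F_i),
\]
with each factor on the right being non-negative.

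\textbf{Cauchy--Schwarz from reflection positivity.} The conditions \eqref{eq:reflectionIsCool}--\eqref{eq:reflectionIsPositive} assert that $\langle A, B\rangle_P := \E(A\,\theta_P B)$ is a symmetric, positive semi-definite bilinear form on bounded functions in $\F_P^+$, so the usual proof of the Cauchy--Schwarz inequality yields $|\E(A\,\theta_P B)|^2 \le \E(A\,\theta_P A)\,\E(B\,\theta_P B)$. Next, fix a reflection plane $P$ and inspect coordinates using \eqref{eq:block_function_def}: each $2\times 2$ block lies entirely on one side of $P$, so the vertices partition as $V = V_P^+ \sqcup V_P^-$ with $|V_P^+|=|V_P^-|=|V|/2$, and the involution $\theta_P$ pairs the blocks in $V_P^+$ with those in $V_P^-$. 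Using $\theta_P^2 = \mathrm{id}$ we can write
\[
\prod_{t \in V(\T_n^2)} \vartheta_t F_t = A \cdot \theta_P B,\quad A := \prod_{t \in V_P^+} \vartheta_t F_t \in \F_P^+,\quad B \in \F_P^+,
\]
where $B$ is chosen so that $\theta_P B = \prod_{t \in V_P^-} \vartheta_t F_t$. Applying the Cauchy--Schwarz inequality yields $|\Psi(\{F_t\})|^2 \le \Psi(\{F_t^{P,+}\})\,\Psi(\{F_t^{P,-}\})$, where $\{F_t^{P,\pm}\}$ are the two symmetrized assignments obtained by keeping the $F_t$ on one side of $P$ and copying them to the other side via $\theta_P$ (using the flipping prescribed by \eqref{eq:reflection_operator_def}).

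\textbf{Iterated reflections.} Iterate this construction across a sequence of reflection planes, alternating vertical and horizontal, chosen so that the compositions generate the full translation group of $\T_n^2$ acting on blocks. At each step the exponent on the left doubles while each $\Psi$-factor on the right acquires an additional $\theta_P$-symmetry in its assignment, so after sufficiently many steps the surviving assignments become constant, each equal to some $F_i$ at every vertex. Careful accounting shows that the exponent on the left matches $|V|$ and the number of surviving factors on the right is exactly $|V|$, each contributing $\Psi(F_i,\ldots,F_i)$ for a unique $i$. Moreover, after full symmetrization every surviving factor is of the form $\E(C\,\theta_P C)$ for a suitable last plane $P$ and function $C \in \F_P^+$, hence is non-negative by \eqref{eq:reflectionIsPositive}.

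\textbf{Main obstacle.} The principal challenge is the combinatorial bookkeeping: verifying that the chosen sequence of planes produces, via iterated symmetrizations, exactly the $|V|$ constant assignments $\{F_t \equiv F_i\}_t$, each appearing once on the right. A convenient organization is to parametrize the iteration tree by binary words encoding the successive $P^+$ vs $P^-$ choices, and to check that the cosets of the stabilizer obtained after all reflections are in bijection with $V(\T_n^2)$. A secondary technical point is the interaction of $\vartheta_t$ with $\theta_P$ and the argument-reordering built into \eqref{eq:reflection_operator_def}: these flips permute the four arguments of each block function at intermediate steps, but since the final factors involve a single function $F_i$ replicated at every vertex, the resulting $\Psi(F_i,\ldots,F_i)$ is unambiguous once the flips are tracked consistently along the iteration.
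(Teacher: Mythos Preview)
Your overall strategy---derive Cauchy--Schwarz from reflection positivity and iterate across reflection planes---is the right one, and your reduction to the ``master inequality'' is exactly how the paper begins. However, the step you flag as the ``main obstacle'' is not merely bookkeeping: as written it does not go through. After $k$ applications of Cauchy--Schwarz the left-hand exponent is $2^k$ and the right-hand side is a product of $2^k$ factors. For your claim that ``the exponent on the left matches $|V|$ and the number of surviving factors on the right is exactly $|V|$'' you would need $|V|=(2n)^2$ to be a power of $2$, i.e.\ $n$ a power of $2$. For general $n$ there is no finite sequence of planes making the binary tree of symmetrizations biject with $V(\T_n^2)$, so the accounting you promise cannot be carried out. (A limiting version---iterate many times, take $2^k$-th roots, and argue that the frequency with which each $F_i$ appears tends to $1/|V|$---can be made to work, but that is not what you wrote.)

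The paper circumvents this arithmetic obstruction by a maximization trick rather than by direct iteration of the two-sided Cauchy--Schwarz. First it isolates the degenerate case where some $\|f_s\|:=\big(\E\prod_t\vartheta_t f_s\big)^{1/|V|}$ vanishes; there a single chain of reflections (Proposition~\ref{prop:homogenization_prop}) forces $\E f=0$. Otherwise one normalizes $g_s:=f_s/\|f_s\|$ and picks $h$ maximizing $|\E h|$ among all block products built from the $g_s$. Combining Cauchy--Schwarz with maximality gives the \emph{one-sided} bound $|\E h|\le \E(\rho_P h)$ for every plane $P$; iterating this along the homogenizing sequence of Proposition~\ref{prop:homogenization_prop} yields $|\E h|\le \|g_s\|^{|V|}=1$ without ever needing $|V|$ to be a power of $2$. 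The desired inequality then drops out after un-normalizing. This device replaces your binary-tree bookkeeping entirely, and is the missing idea in your proposal.
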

For completeness, we provide a short proof of the chessboard
estimate in Section~\ref{sec:proof_of_chessboard_estimate} below. We remark that the same proof shows that if $\P$ is reflection
positive with respect to all measurable functions on
$\R^{V(\T_n^{2})}$ then it also satisfies the chessboard estimate
with respect to this class. We restrict here to the class $\F$ in
view of our application to random surface measures, see
Proposition~\ref{prop:random_surface_RP} below.

\subsection{Controlled gradients property}
In this section we prove Theorem~\ref{thm:controlledGradients}. We
start by proving that our random surface measures are reflection
positive.
\begin{prop}\label{prop:random_surface_RP}
Suppose a measurable $U:\R\to(-\infty,\infty]$ satisfies
$U(x)=U(-x)$ and the condition~\eqref{eq:U_integral_cond}. Then for
any $n\ge 1$ the measure $\mu_{\T_n^2, \zero, U}$ is reflection
positive with respect to $\F$.
\end{prop}
\begin{proof}
Suppose $\varphi$ is randomly sampled from $\mu_{\T_n^2, \zero, U}$.
Fix a plane of reflection $P$, a vertex $v_0\in P$ and suppose
$\tilde{\varphi}$ is randomly sampled from $\mu_{\T_n^2, v_0, U}$
(the measure $\mu_{\T_n^2,v_0,U}$ is obtained by replacing $\zero$
with $v_0$ in \eqref{eq:mu_T_n_2_U_measure_def}). We write
$\mathbb{E}_{\mu_{\T_n^2,\zero,U}}$ and
$\mathbb{E}_{\mu_{\T_n^2,v_0,U}}$ for the expectation operators
corresponding to $\varphi$ and $\tilde{\varphi}$, respectively.
Observe that
\begin{equation}\label{eq:change_base_point}
f(\varphi) \,{\buildrel d \over =}\,
f(\tilde{\varphi})\quad\text{for any $f\in\F$}
\end{equation}
since the induced measure on the gradient of $\varphi$ is
translation invariant. In addition, by symmetry,
\begin{equation}\label{eq:reflection_symmetry_of_measure}
  \tilde{\varphi} \,{\buildrel d \over =}\, \theta_P
  \tilde{\varphi}.
\end{equation}
For two bounded $f, g\in \F_P^+$ the relation
\eqref{eq:reflectionIsCool} now follows from
\eqref{eq:change_base_point} and
\eqref{eq:reflection_symmetry_of_measure} by
\begin{equation*}
  \mathbb{E}_{\mu_{\T_n^2,\zero,U}}(f\,\theta_P g)=\mathbb{E}_{\mu_{\T_n^2,v_0,U}}(f\,\theta_P
  g) = \mathbb{E}_{\mu_{\T_n^2,v_0,U}}(\theta_P(f\,\theta_P g)) = \mathbb{E}_{\mu_{\T_n^2,v_0,U}}(g\,\theta_P f) = \mathbb{E}_{\mu_{\T_n^2,\zero,U}}(g\,\theta_P
  f).
\end{equation*}
To see the relation \eqref{eq:reflectionIsPositive} observe that, by
the domain Markov property and symmetry, conditioned on
$(\tilde{\varphi}_v)_{v\in\bar{P}}$ the configurations
$(\tilde{\varphi}_v)_{v\in P^+}$ and $((\theta_P
\tilde{\varphi})_v)_{v\in P^+}$ are independent and identically
distributed. Thus, for any $f\in\F_P^+$ we have
\begin{align*}
  \mathbb{E}_{\mu_{\T_n^2,\zero,U}}(f\,\theta_P f) &= \mathbb{E}_{\mu_{\T_n^2,v_0,U}}(f\,\theta_P
  f) = \mathbb{E}_{\mu_{\T_n^2,v_0,U}}\left(\mathbb{E}_{\mu_{\T_n^2,v_0,U}}\left(f\,\theta_P
  f\,|\,(\tilde{\varphi}_v)_{v\in\bar{P}}\right)\right) =\\
  &= \mathbb{E}_{\mu_{\T_n^2,v_0,U}}\left(\mathbb{E}_{\mu_{\T_n^2,v_0,U}}\left(f\,|\,(\tilde{\varphi}_v)_{v\in\bar{P}}\right)\mathbb{E}_{\mu_{\T_n^2,v_0,U}}\left(\theta_P
  f\,|\,(\tilde{\varphi}_v)_{v\in\bar{P}}\right)\right) =\\
  &= \mathbb{E}_{\mu_{\T_n^2,v_0,U}}\left(\mathbb{E}_{\mu_{\T_n^2,v_0,U}}\left(f\,|\,(\tilde{\varphi}_v)_{v\in\bar{P}}\right)^2\right)\ge
  0.\qedhere
\end{align*}
\end{proof}
We now prove Theorem~\ref{thm:controlledGradients}. Fix
$0<\delta<1$, $n\ge 1$ and suppose $\varphi$ is randomly sampled
from $\mu_{\T_n^2, \zero, U}$. Let $K$ be the constant from
\eqref{eq:interval_potential}, where we write $K=\infty$ if
$U(x)<\infty$ for all $x$. Recall the definition of the random graph $\EC(\varphi,L)$ from
\eqref{eq:E_varphi_L_random_graph}. For an edge $e=(v,w)\in
E(\T_n^2)$ and $0<L<\infty$ define the function $f_{e,L}\in \F$ by
\begin{equation*}
  f_{e,L}(\psi):=1_{(|\psi_v - \psi_w|\ge L)}.
\end{equation*}
We need to show that there exists some $0<L<K$, independent of $n$,
such that
\begin{equation*}
  \E\left(\prod_{i=1}^k f_{e_i,L}\right)\le \delta^k\quad\text{for all $k\ge 1$ and distinct $e_1,\ldots, e_k\in E(\T_n^2)$}.
\end{equation*}
Fix some $k\ge 1$ and distinct $e_1,\ldots, e_k\in E(\T_n^2)$.
Define four block functions at $(0,0)$ by
\begin{align*}
  f_L^{\text{hor,0}}(\psi)&:=1_{(|\psi_{(1,0)} - \psi_{(0,0)}|\ge L)},\qquad f_L^{\text{ver,0}}(\psi):=1_{(|\psi_{(0,1)} - \psi_{(0,0)}|\ge
  L)},\\
  f_L^{\text{hor,1}}(\psi)&:=1_{(|\psi_{(1,1)} - \psi_{(0,1)}|\ge L)},\qquad f_L^{\text{ver,1}}(\psi):=1_{(|\psi_{(1,1)} - \psi_{(1,0)}|\ge L)}.
\end{align*}
The definition \eqref{eq:reflection_operator_def} of the reflection
operators $(\vartheta_t)$ implies that there exist
$k_1,k_2,k_3,k_4\ge 0$ with $k_1+k_2+k_3+k_4=k$ and, for each $1\le
j\le 4$, distinct $(t_{j,i})_{1\le i\le k_j}\subseteq V(\T_n^2)$
such that
\begin{equation*}
  \E\left(\prod_{i=1}^k f_{e_i,L}\right) = \E\left(\prod_{i=1}^{k_1}
  \vartheta_{t_{1,i}} f_L^{\text{hor,0}}\prod_{i=1}^{k_2}
  \vartheta_{t_{2,i}} f_L^{\text{hor,1}}\prod_{i=1}^{k_3}
  \vartheta_{t_{3,i}} f_L^{\text{ver,0}}\prod_{i=1}^{k_4}
  \vartheta_{t_{4,i}} f_L^{\text{ver,1}}\right).
\end{equation*}
Assume, without loss of generality, that $k_1\ge k/4$ (as the cases
that $k_j\ge k/4$ for some $2\le j\le 4$ follow analogously). Then,
by the chessboard estimate, Theorem~\ref{thm:checkboard},
\begin{equation*}
  \E\left(\prod_{i=1}^k f_{e_i,L}\right)\le \E\left(\prod_{i=1}^{k_1}
  \vartheta_{t_{1,i}} f_L^{\text{hor}}\right)\le \Bigg(\E\Bigg(\prod_{t\in \T_n^2} \vartheta_t
  f_L^{\text{hor}}\Bigg)\Bigg)^{\frac{k_1}{|V(\T_n^2)|}}
\end{equation*}
and thus it suffices to show that there exists some $0<L<K$,
independent of $n$, such that
\begin{equation}\label{eq:no_horizontal_extremal_edges_estimate}
  \E\left(\prod_{t\in \T_n^2} \vartheta_t f_L^{\text{hor}}\right)\le
  \delta^{4|V(\T_n^2)|}.
\end{equation}
We note that
\begin{equation}\label{eq:E_L_introduction}
  \E\left(\prod_{t\in \T_n^2} \vartheta_t f_L^{\text{hor}}\right) =
  \P(\varphi\in E_L)
\end{equation}
where
\begin{equation*}
  E_L:=\{\psi\in \R^{V(\T_n^2)}\colon |\psi_{(j+1, k)} - \psi_{j,k}|\ge
  L\text{ for all $-n+1\le j\le n$ and all even $-n+1\le k\le n$}\}.
\end{equation*}
Thus, recalling \eqref{eq:mu_T_n_2_U_measure_def}, we have
\begin{equation}\label{eq:E_L_explicit_probability}
  \P(\varphi\in E_L) = \frac{1}{Z_{\T_n^2,\zero, U}} \int_{E_L}\exp\Bigg(-\sum_{(v,w)\in E(\T_n^2)}
  U(\psi_v - \psi_w)\Bigg) \delta_0(d\psi_{\zero})\prod_{v\in V(\T_n^2)\setminus\{\zero\}}
  d\psi_v=:\frac{Z_{\T_n^2,\zero, U}(E_L)}{Z_{\T_n^2,\zero, U}}.
\end{equation}
We estimate the numerator and denominator in the last fraction
separately. First, we have already shown a lower bound on
$Z_{\T_n^2,\zero, U}$ in \eqref{eq:partition_function_lower_bound}.
Second, denote by $H$ the subset of edges $((j, k),(j+1,k))\in
E(\T_n^2)$ for which $k$ is even. Let $S$ be a spanning tree of
$\T_n^2$, regarded here as a subset of edges, satisfying
\begin{equation}\label{eq:S_and_H_intersection}
  |S\cap H| \ge \frac{1}{10}|E(\T_n^2)|.
\end{equation}
Then
\[
Z_{\T_n^2,\zero, U}(E_L)\leq C_1(U)^{|E(\T_n^2)\setminus
S|}\int_{E_L}\exp\Bigg(-\sum_{(v,w)\in S}
  U(\psi_v - \psi_w)\Bigg) \delta_0(d\psi_{\zero})\prod_{v\in V(\T_n^2)\setminus\{\zero\}}
  d\psi_v
\]
where $C_1(U)\eqdef \sup_x\, \exp(-U(x))<\infty$ by
\eqref{eq:U_integral_cond}. The integral above can be estimated by
integrating the vertices in $V(\T_n^2)\setminus\{\zero\}$ leaf by
leaf according to the spanning tree $S$. Recalling the definition of
$E_L$, two cases arise depending on whether or not the edge
connecting a leaf to the remaining tree belongs to $H$. Thus we
obtain
\begin{equation*}
  Z_{\T_n^2,\zero, U}(E_L)\le C_1(U)^{|E(\T^{2})\setminus
S|} C_2(U)^{|S\setminus H|} C_3(U,L)^{|S\cap H|}
\end{equation*} where
\begin{equation*}
C_2(U)\eqdef\int \exp\rbr{-U(x)}\dd x\quad\text{and}\quad
C_3(U,L)\eqdef\int 1_{|x|\geq L}\exp\rbr{-U(x)}\dd x.
\end{equation*}
Condition~\eqref{eq:U_integral_cond} ensures that $C_2(U)<\infty$
and the definition of $K$ gives that $\lim_{L\uparrow K} C_3(U,L) =
0$. Thus, using \eqref{eq:S_and_H_intersection}, for every $\eps>0$
there exists an $0<L<K$, independent of $n$, for which
\begin{equation*}
  Z_{\T_n^2,\zero, U}(E_L)\le \eps^{|V(\T_n^2)|}.
\end{equation*}
This inequality, together with \eqref{eq:E_L_introduction},
\eqref{eq:E_L_explicit_probability} and
\eqref{eq:partition_function_lower_bound}, implies that we may
choose an $0<L<K$, independent of $n$, so that
\eqref{eq:no_horizontal_extremal_edges_estimate} holds, as we wanted
to show.

\subsection{Proof of the chessboard
estimate}\label{sec:proof_of_chessboard_estimate} In this section we
prove Theorem~\ref{thm:checkboard}.

Let $\varphi$ be randomly sampled from the given measure $\P$.
Reflection positivity of $\P$ with respect to $\F$ implies that for
each plane of reflection $P$, the bilinear form $E(g\theta_P h)$ is
a degenerate inner product on bounded $g,h\in \F_{P^+}$. In
particular, we have the Cauchy-Schwartz inequality,
\begin{equation}\label{eq:C-S_reflection_positivity}
  |E g\theta_P h| \le \sqrt{\E(g\theta_P g)\E(h\theta_P h)},\quad \text{for all bounded }g,h\in \F_{P^+}.
\end{equation}

For a function $f\in \F$ of the form
\begin{equation}\label{eq:product_block_functions}
  f(\varphi) = \prod_{t\in V(\T_n^2)} \vartheta_t f_t\quad\text{for some $(f_t)$, bounded block functions at
  $(0,0)$}
\end{equation}
and a plane of reflection $P$, define two functions, the ``parts of
$f$ in $P^-$ and $P^+$'', by
\begin{equation*}
  f_{P^-}:=\prod_{t\in P^-\setminus P} \vartheta_t
  f_t\in\F_{P^-}\quad\text{and}\quad f_{P^+}:=\prod_{t\in P^+\setminus (\bar{P}\setminus P)} \vartheta_t
  f_t\in\F_{P^+}.
\end{equation*}
Define also the function $\rho_P f\in \F$ by
\begin{equation*}
  \rho_P f:= f_{P^+} \theta_P f_{P^+}
\end{equation*}
and note that $\E(\rho_P f)\ge 0$ by
\eqref{eq:reflectionIsPositive}. Observe that
\begin{equation*}
  f = f_{P^+} f_{P^-} = f_{P^+} \theta_P (\theta_P f_{P^-}).
\end{equation*}
Thus, using the Cauchy-Schwartz inequality
\eqref{eq:C-S_reflection_positivity} with $g=f_{P^+}$ and $h =
\theta_P f_{P^-}$ we have
\begin{equation}\label{eq:Cauchy-Schwartz_inequality_RP}
  |\E(f)| \le \sqrt{\E(f_{P^+} \theta_P f_{P^+}) \E (f_{P^-} \theta_P
  f_{P^-})}
  = \sqrt{\E(\rho_P f) \E(\rho_{\bar{P}\setminus P} f)}.
\end{equation}

Our first goal is to show that starting with a function of the form
\eqref{eq:product_block_functions}, one may iteratively apply the
operator $\rho_P$ with different planes of reflection $P$ to reach a
function of the form \eqref{eq:product_block_functions} with all the
block functions identical.
\begin{prop}\label{prop:homogenization_prop}
  For each $s\in V(\T_n^2)$ there exists a sequence of planes of
  reflection $P_1,\ldots, P_m$ such that for each $f$ of the form
  \eqref{eq:product_block_functions} we have
  \begin{equation*}
    \rho_{P_m}\rho_{P_{m-1}}\cdots\rho_{P_1} f = \prod_{t\in
    V(\T_n^2)} \vartheta_t f_s.
  \end{equation*}
\end{prop}
\begin{proof}
  Let $s = (j,k)\in V(\T_n^2)$. Define the vertical planes of reflection $(Q_i)$, $0\le i\le \lceil\log_2(n)\rceil$, by $Q_i:=P_{j_i}^{\text{ver}}$ for $j_i
:= j + 1 - 2^i$ modulo $2n$. One may verify directly that
\begin{equation*}
  \rho_{Q_{\lceil\log_2(n)\rceil}}\cdots\rho_{Q_1}\rho_{Q_0} f = \prod_{t\in
    V(\T_n^2)} \vartheta_t f_{\pi(t)}
\end{equation*}
for some $\pi:V(\T_n^2)\to V(\T_n^2)$ satisfying that $\pi((a,b))
  = (j,b)$ for all $-n+1\le a\le n$. In the same manner, one may
  now take the horizontal planes of reflection $(R_i)$, $0\le i\le \lceil\log_2(n)\rceil$, defined by $R_i:=P_{k_i}^{\text{hor}}$ for $k_i
:= k + 1 - 2^i$ modulo $2n$, and conclude that
\begin{equation*}
  \rho_{R_{\lceil\log_2(n)\rceil}}\cdots\rho_{R_1}\rho_{R_0}\rho_{Q_{\lceil\log_2(n)\rceil}}\cdots\rho_{Q_1}\rho_{Q_0} f = \prod_{t\in
    V(\T_n^2)} \vartheta_t f_{s},
\end{equation*}
as required.
\end{proof}

For a bounded block function $f_0$ at $(0,0)$ define
\begin{equation*}
  \|f_0\|:=\Bigg(\E\Bigg(\prod_{t\in V(\T_n^2)} \vartheta_t
  f_0\Bigg)\Bigg)^{\frac{1}{|V(\T_n^2)|}}
\end{equation*}
which is well-defined and non-negative by \eqref{eq:reflectionIsPositive}. Let $f$
have the form \eqref{eq:product_block_functions}. With the above
notation, the chessboard estimate \eqref{eq:chessboard_estimate}
becomes the inequality
\begin{equation}\label{eq:chessboard_estimate_inequality_in_proof}
  |\E(f)| \le \prod_{s\in V(\T_n^2)} \|f_s\|,
\end{equation}
where we note that in Theorem~\ref{thm:checkboard} we may assume
that $m=|V(\T_n^2)|$ by taking some of the block functions to be
constant.

Consider first the case that
\begin{equation}\label{eq:some_norm_0}
\|f_s\| = 0\quad\text{for some $s\in V(\T_n^2)$}.
\end{equation}
Let $P_1,\ldots, P_m$ be the planes of reflection corresponding to
$s$ as given by Proposition~\ref{prop:homogenization_prop}. By
iteratively applying the Cauchy-Schwartz inequality
\eqref{eq:Cauchy-Schwartz_inequality_RP} with the planes $(P_i)$ we
may obtain that $|\E(f)|$ is bounded by a product in which
$\|f_s\|$, raised to some positive power, is one of the factors.
Thus we conclude from \eqref{eq:some_norm_0} that $\E(f) = 0$,
establishing \eqref{eq:chessboard_estimate_inequality_in_proof} in
this case.

Second, assume that \eqref{eq:some_norm_0} does not hold. Define
\begin{equation*}
  g_s:=\frac{f_s}{\|f_s\|},\quad s\in V(\T_n^2).
\end{equation*}
Let $h\in\F$ be an (arbitrary) function maximizing $|\E(h)|$ among
all functions of the form
\begin{equation}\label{eq:form_of_h}
  h = \prod_{t\in V(\T_n^2)} \vartheta_t h_t\quad\text{with each $h_t$ being one of the
  $(g_s)$}.
\end{equation}
Observe that, by the Cauchy-Schwartz inequality
\eqref{eq:Cauchy-Schwartz_inequality_RP} and the definition of $h$,
we have
\begin{equation*}
  |\E(h)|\le \sqrt{\E(\rho_P h) \E(\rho_{\bar{P}\setminus P} h)}\le \sqrt{\E(\rho_P h) |\E(h)|}\quad\text{for any plane of reflection $P$}.
\end{equation*}
Thus,
\begin{equation}\label{eq:E_h_bound}
  |\E(h)|\le \E(\rho_P h)\quad\text{for any plane of reflection $P$}.
\end{equation}
In particular, $\E(\rho_P h)$ also maximizes $|\E(h)|$ among
functions of the form \eqref{eq:form_of_h} (so that equality holds
in the last inequality). Let $P_1,\ldots, P_m$ be the planes of
reflection corresponding to $s = \zero$ as given by
Proposition~\ref{prop:homogenization_prop}. By iteratively applying
\eqref{eq:E_h_bound} with these planes we obtain that
\begin{equation*}
  |\E(h)|\le \E(\rho_{P_m}\rho_{P_{m-1}}\cdots\rho_{P_1} h) =
  \|h_\zero\|^{|V(\T_n^2)|} = 1
\end{equation*}
since $\|g_s\|=1$ for all $s$ and $h$ has the form
\eqref{eq:form_of_h}. Finally, the definition of $h$ now shows that
\begin{equation*}
  \frac{|\E(f)|}{\prod_{s\in V(\T_n^2)} \|f_s\|} \le |\E(h)| \le 1
\end{equation*}
implying \eqref{eq:chessboard_estimate_inequality_in_proof} and
finishing the proof of Theorem~\ref{thm:checkboard}.

\section{Lower bound for random surface fluctuations in two
dimensions}\label{sec:proofs_for_main_theorems} Recall the
definition of the controlled gradients property from
Section~\ref{sec:reflection_positivity}. Throughout the section we
fix $n\ge 2$ and a potential $U$ with the following properties:
\begin{itemize}
  \item There exists an $0<\eps\le 1/2$ for which $U$ has $(1/8, 1-\eps)$-controlled gradients on
$\T_n^2$.
  \item $U$ restricted to $[-1,1]$ is twice continuously differentiable.
\end{itemize}
We fix $\eps$ to the value given by the first property. Write $\zero:=(0,0)$. For the rest of the section we suppose that
$\varphi$ is a random function sampled from the probability
distribution $\mu_{\T_n^2, \zero, U}$ defined in
\eqref{eq:mu_T_n_2_U_measure_def}. For a vertex $v=(v_1,v_2)$ of
$\T_n^2$ we write $\|v\|_1:=|v_1| + |v_2|$.
\begin{thm}\label{thm:variance_lower_bound}
There exist constants $C(U),c(U)>0$ such that for any $v\in
V(\T_n^2)$ with $\|v\|_1\ge (\log n)^2$ we have
\begin{align}
  &\var(\varphi_v) \ge c(U) \log(1+\|v\|_1),\label{eq:variance_estimate}\\
  &\P(|\varphi_v|\le r)\le C(U)\left(\frac{r}{\sqrt{\log(1+\|v\|_1)}}\right)^{2/3},\quad r\ge 1,\label{eq:anti-concentration_estimate}\\
  &\P(|\varphi_v| \ge t\sqrt{\log(1+\|v\|_1)})\ge
  c(U)e^{-C(u)t^2},\quad 1\le t\le \frac{1+\sqrt{\|v\|_1}}{1+\log n}\label{eq:large_deviation_estimate}.
\end{align}
\end{thm}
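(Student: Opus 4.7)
The plan is to execute the Mermin--Wagner argument from Section~\ref{sec:Vanilla_Mermin_Wagner}, with the naive translations $\psi\pm\tau$ replaced by the addition algorithms $T^+(\psi), T^-(\psi)$ of Section~\ref{sec:Shift-Algorithm-and}, and with the controlled-gradients hypothesis on $U$ used to guarantee that the random subgraph $\EC(\varphi)$ of \eqref{eq:EC_def} is subcritical on an event of probability close to one. For each fixed $v$ with $\|v\|_1\ge(\log n)^2$ and each shift magnitude $\alpha\ge 1$ (absolute constant for \eqref{eq:variance_estimate} and \eqref{eq:anti-concentration_estimate}; $\alpha=t$ for \eqref{eq:large_deviation_estimate}), I will construct an addition function $\tau=\tau_{v,\alpha}:V(\T_n^2)\to[0,\infty)$ satisfying $\tau(\zero)=0$, $\tau(v)\asymp\alpha\sqrt{\log(1+\|v\|_1)}$, bounded squared-gradient sum $\sum_{(u,w)\in E(\T_n^2)}(\tau(u)-\tau(w))^2=O(\alpha^2)$ in the spirit of \eqref{eq:squared_gradients_tau}, and — crucially — a localised Lipschitz profile (essentially vanishing on a large neighbourhood of the origin and saturating beyond $v$) ensuring both $L(\tau,\eps)\ge C_0\log n$ in the regime $\alpha\le(1+\sqrt{\|v\|_1})/(1+\log n)$ and $\sum_{w}\tau'(w,1+C_0\log n)^2=O(\eps^2)$.

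The $(1/8,1-\eps)$-controlled-gradients hypothesis now controls $M(\varphi)$: since every self-avoiding path of length $k$ from a given vertex lies in $\EC(\varphi)$ with probability at most $(1/8)^k$, and there are at most $4^k$ such paths, a union bound over the $O(n^2)$ possible starting vertices gives $\P(M(\varphi) > C_0\log n)\le 1/4$ for a suitable absolute $C_0$. Writing $B:=\{M(\varphi)\le L(\tau,\eps)\}$, the preceding paragraph yields $\P(B)\ge 3/4$.

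On $B$ I will verify the density and Jacobian inequalities that replace those of Section~\ref{sec:Vanilla_Mermin_Wagner}. Property~\eqref{property:Lipschitz_preservation} of $T^\pm$ preserves the set of edges whose gradient lies outside $(-1,1)$ and keeps the rest inside $(-1,1)$; since $U\in C^2([-1,1])$, Taylor expansion exactly as in \eqref{eq:geometric_average_densities} yields
\begin{equation*}
\sqrt{g(T^+\psi)\,g(T^-\psi)}\ge c_1(U,\alpha)\,g(\psi),\qquad\psi\in B,
\end{equation*}
while the Jacobian estimate of property~\eqref{property:Jacobian_estimate}, together with the bound on $\sum_{w}\tau'(w,1+C_0\log n)^2$ from the first paragraph, gives $\sqrt{J^+(\psi)J^-(\psi)}\ge c_2(U,\alpha)$ on $B$. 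Applying Cauchy--Schwartz together with the change-of-variables formula \eqref{eq:Jacobian_formula_properties_section} to $E_a:=\{|\psi_v|\le a\}$ then produces the analog of \eqref{eq:prob_estimate_final},
\begin{equation*}
c(U,\alpha)\,\P(E_a\cap B)\le \bigl(\P(|\varphi_v - \sigma^+_v|\le a)\,\P(|\varphi_v - \sigma^-_v|\le a)\bigr)^{1/2},
\end{equation*}
where property~\eqref{property:addition_lower_bound} ensures that $\sigma^\pm_v = \pm\tau(v)+O(\eps)$. Combined with $\P(B)\ge 3/4$, this is the core small-ball-vs-shifted-ball inequality from which the three conclusions will follow.

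The individual bounds are then extracted as follows. Estimate \eqref{eq:variance_estimate} is immediate from the arithmetic-geometric argument at the end of Section~\ref{sec:Vanilla_Mermin_Wagner} applied to the core inequality. Estimate \eqref{eq:anti-concentration_estimate} with exponent $2/3$ is obtained by iterating the core inequality at a geometric sequence of scales of $\tau(v)$ and optimizing the resulting products (the $2/3$ emerging from a three-step chaining of the geometric mean). Estimate \eqref{eq:large_deviation_estimate} follows by taking $\alpha=t$ and reading off the Gaussian-type lower bound, the upper restriction $t\le(1+\sqrt{\|v\|_1})/(1+\log n)$ coming precisely from the requirement $L(\tau,\eps)\ge C_0\log n$ of the first paragraph. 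The main obstacle will be the explicit construction and Lipschitz analysis of $\tau$ in the first paragraph: $\tau$ must simultaneously achieve the target value $\alpha\sqrt{\log(1+\|v\|_1)}$, have bounded squared-gradient sum, and possess the localised Lipschitz profile needed to keep both $L(\tau,\eps)\ge C_0\log n$ and the Jacobian penalty $\sum_{w}\tau'(w,1+C_0\log n)^2=O(\eps^2)$ under control — an interpolation problem that should be solved via a discrete harmonic-potential-type construction adapted to the two-dimensional geometry.
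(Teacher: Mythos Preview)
Your overall architecture is correct and matches the paper's, but there is a genuine gap in the Jacobian step. You propose to obtain a \emph{deterministic} lower bound $\sqrt{J^+(\psi)J^-(\psi)}\ge c_2(U,\alpha)$ on the event $B=\{M(\varphi)\le L(\tau,\eps)\}$, via the claim $\sum_{w}\tau'(w,1+C_0\log n)^2=O(\eps^2)$. This claim cannot hold for any $\tau$ meeting your other requirements. Indeed, for the natural logarithmic profile (and for any profile with Dirichlet energy $O(\alpha^2)$ reaching height $\alpha\sqrt{\log(1+\|v\|_1)}$), a direct computation gives $\sum_{w}\tau'(w,k)^2\asymp \alpha^2 k^2$; with $k=C_0\log n$ this is of order $\alpha^2(\log n)^2$, so the resulting Jacobian lower bound on $B$ is $\exp(-c\,\alpha^2(\log n)^2/\eps^2)$, which destroys the argument. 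The point is that $B$ only guarantees $r(\varphi,w)\le C_0\log n$, whereas the Jacobian penalty in property~\eqref{property:Jacobian_estimate} involves the \emph{random} quantity $\sum_v\tau'(v,1+\max_{w\sim v}r(\varphi,w))^2$, and its worst case on $B$ is far larger than its typical value.

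The paper fixes this by keeping the Jacobian threshold as a free parameter $s$ (Lemma~\ref{lem:lowerboundForShiftedConfiguration}), bounding $\P(J^+J^-<s^2)$ via Markov's inequality applied to $\E\sum_v\tau'(v,1+\max_{w\sim v}r(\varphi,w))^2$, and then using the exponential tail $\P(r(\varphi,w)\ge k)\le 2^{-k}$ from the controlled-gradients hypothesis to get $\E[\cdots]\le 4\sum_v\sum_{k\ge0}2^{-k}\tau'(v,k+1)^2=O(\alpha^2)$ (Lemmas~\ref{lem:Jacobian_and_bad_edge_diameter} and~\ref{lem:eta_Jacobian_and_bad_edge_bound}). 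The geometric weighting by $2^{-k}$ is exactly what rescues the sum. Your plan should be modified to carry this extra probabilistic layer; the event $B$ alone is not enough.

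A second, smaller issue: your explanation of the exponent $2/3$ in \eqref{eq:anti-concentration_estimate} as ``three-step chaining of the geometric mean'' is not how it arises. In the paper one applies the core inequality with shifts $\tau(v)=5rk$ for $k=1,\dots,K$, obtaining disjoint target intervals; summing and using that disjoint probabilities total at most $1$ gives $K\bigl[\P(|\varphi_v|\le r)-C(rK/\eta(v))^2\bigr]\le C'$. Optimizing $K=(\eta(v)/r)^{2/3}$ balances the two terms and yields the $2/3$ power.
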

The theorem establishes lower bounds for the variance and large
deviation probabilities of $\varphi_v$ as well as upper bounds on
the probability that $\varphi_v$ is atypically small. The lower
bound on the variance is expected to be sharp up to the value of
$c(U)$.

The theorem is not optimal in several ways. One expects the results
to hold for all $v\in V(\T_n^2)$ without the restriction on
$\|v\|_1$, one expects that the exponent $2/3$ may be replaced by
$1$ and that the restrictions on $r$ and $t$ may be relaxed. We
believe that further elaboration of our methods may address some of
these issues. However, since our main focus is on vertices $v$ for
which $\|v\|_1$ is of order $n$ and on estimating the variance of
$\varphi_v$ we prefer to present simpler proofs.

\begin{thm}\label{thm:maximum_lower_bound}
There exists a constant $c(U)>0$ such that
\begin{equation*}
  \P\left(\max_{v\in V(\T_n^2)} |\varphi_v|\ge c(U)\log n\right) \ge \frac{1}{2}.
\end{equation*}
\end{thm}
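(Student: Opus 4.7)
The plan is to deduce Theorem \ref{thm:maximum_lower_bound} from the large deviation estimate \eqref{eq:large_deviation_estimate} of Theorem \ref{thm:variance_lower_bound} by a second moment argument. First, I apply \eqref{eq:large_deviation_estimate} at $v^* = (n,n)$, choosing $t := \lambda\sqrt{\log n}$ for a constant $\lambda = \lambda(U) > 0$ small enough that $C(U)\lambda^2 < 2$ and $t$ falls within the permitted range $[1, (1+\sqrt{2n})/(1+\log n)]$ for $n$ large. This gives
\[
  \P\bigl(|\varphi_{v^*}| \ge c(U)\lambda\log n\bigr) \ge c(U)\, n^{-C(U)\lambda^2},
\]
a polynomially small but non-negligible probability. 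The same polynomial lower bound holds uniformly at every vertex $v$ with $\|v\|_1$ of order $n$. Writing $a := c(U)\lambda\log n$ and $N := |\{v \in V(\T_n^2) : \|v\|_1 \ge n/4,\ |\varphi_v| \ge a\}|$, we obtain $\E[N] \ge c'(U)\, n^{2-C(U)\lambda^2}$, which tends to infinity.

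Since $\{N \ge 1\} \subseteq \{\max_v|\varphi_v| \ge a\}$, it suffices to prove $\P(N \ge 1) \ge 1/2$. The natural tool is the Paley--Zygmund inequality, which requires $\E[N^2] \le 2(\E[N])^2$. This in turn reduces to a pairwise correlation estimate of the form
\[
  \P(|\varphi_v| \ge a,\, |\varphi_w| \ge a) \le (1+o(1))\,\P(|\varphi_v| \ge a)\,\P(|\varphi_w| \ge a)
\]
for pairs $(v,w)$ at graph distance at least $n^\gamma$ for a suitable $\gamma \in (0,1)$; the contribution of closer pairs to $\E[N^2]$ is of lower order and absorbed in the constants.

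The main obstacle is the pairwise correlation bound. The approach is to apply the shift argument of Sections \ref{sec:Shift-Algorithm-and}--\ref{sec:proofs_for_main_theorems} \emph{simultaneously} at $v$ and at $w$, via a function $\tau$ that is essentially the sum of two single-vertex shifts, each localized near one of the two vertices. The Jacobian estimate of Lemma \ref{lem:lowerDenistyBound}, together with the controlled gradient property (Theorem \ref{thm:controlledGradients}, assumed in this section) which ensures that $M(\varphi) \le L(\tau,\eps)$ with probability close to $1$, then yields the required factorization. The delicate point is that in two dimensions the Dirichlet sum $\sum_{(v,w)\in E(\T_n^2)}(\tau(v)-\tau(w))^2$ associated with a single shift of size $\log n$ is already of order $\log n$, saturating the bound; for the pairwise argument the two component shifts must therefore be chosen so that their joint Dirichlet energy is additive up to constants. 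This is what forces the polynomial separation of $v$ and $w$ and a careful choice of the localization profile of each component. Once the factorization is established and $\lambda$ is tuned small, Paley--Zygmund delivers $\P(N \ge 1) \ge 1/2$, completing the proof.
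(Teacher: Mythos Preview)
Your proposal has a genuine gap at the second moment step. The Paley--Zygmund inequality requires an \emph{upper} bound on $\E[N^2]$, hence an upper bound on the joint probabilities $\P(|\varphi_v|\ge a,\,|\varphi_w|\ge a)$. But the shift argument of Sections~\ref{sec:Shift-Algorithm-and}--\ref{sec:proofs_for_main_theorems} is inherently one-sided: it produces \emph{lower} bounds on tail probabilities by transporting mass from $\{|\varphi_u|\le a\}$ to $\{|\varphi_u|\text{ large}\}$ via a measure-preserving bijection with controlled Jacobian. Applying it simultaneously at $v$ and $w$ with a two-bump $\tau$ would at best give a lower bound on the joint tail, not the factorization you need. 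No tool in the paper yields correlation upper bounds of the sort you describe; indeed, such decorrelation estimates for general potentials satisfying~\eqref{eq:potential_condition} (including the hammock potential) are open, as noted in Section~\ref{sec:open_prob}.

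The paper avoids this obstacle entirely by a first-moment style argument that exploits the \emph{conditional} form of Lemma~\ref{lem:lowerboundForShiftedConfiguration}. One picks $n$ vertices $u_1,\ldots,u_n$ pairwise separated by more than $2n^{1/3}$, sets $B_i=\{|\varphi_{u_i}|\ge \rho(U)\log n\}$ and $A_i=\cap_{j<i}B_j^c$, and for each $i$ uses a shift $\tau_i$ supported in a ball of radius $\lfloor n^{1/3}\rfloor$ around $u_i$. The separation ensures $A_i$ is $\F_0$-measurable (it depends only on values where $\tau_i=0$), so the lemma applies with $A=A_i$ and yields $\P(B_i\cap A_i)\ge c(U)s\bigl[1-\P(B_i\cup A_i^c)-(\text{error})\bigr]$. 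Summing the telescoping identity $\P(\cup_i B_i)=\sum_i\P(B_i\cap A_i)$ and choosing $s$ and $\rho(U)$ so that $c(U)s\ge 8/n$ forces some $\P(B_i\cup A_i^c)\ge 1/2$, hence $\P(\cup_i B_i)\ge 1/2$. No two-point estimate is ever needed.
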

Again, this estimate is expected to be sharp up to the value of
$c(U)$.

\subsection{Tools}
In this section we let $\tau:V(\T_n^2)\to[0,\infty)$ be an arbitrary
function satisfying $\tau(\zero)=0$. We let $T^+, T^-$ be the
functions defined in Section~\ref{sec:Shift-Algorithm-and} acting on
the graph $\T_n^2$ with the given $\tau$ function and constant
  $\eps$. We also recall the notation $J^+, J^-, M(\varphi)$ and $L(\tau,\eps)$ from Section~\ref{sec:addition_algorithm_prop}. Our main tool for lower
bounding the fluctuations of $\varphi$ is the following lemma.
\begin{lem} \label{lem:lowerboundForShiftedConfiguration}
Denote
\begin{equation*}
  V_0:=\{v\in V(\T_n^2)\colon \tau(v) = 0\}
\end{equation*}
and let $\F_0$ be the sigma-algebra generated by $(\varphi_v)$,
$v\in V_0$. There exists a constant $c(U)>0$ such that for any $a,s>0$, any $u\in V(\T_n^2)$ and any event
$A\in\F_0$ we have
\begin{align}\label{eq:shiftLowerbound}
\bigg[\P\bigg(&\Big\{|\varphi_{u} - \tau(u)|\le a + \frac{\eps}{2}\Big\}\cap A\bigg)\P\bigg(\Big\{|\varphi_{u} + \tau(u)|\le a + \frac{\eps}{2}\Big\}\cap A\bigg)\bigg]^{1/2}\ge\nonumber \\
 &\ge
c(U)s\left[\P\left(\{|\varphi_u|\le a\}\cap A\right) -
\P\left(\left(\{J^+(\varphi)J^-(\varphi) <
s^2\}\cup\{M(\varphi)>L(\tau,\eps)\}\right)\cap A\right)\right].
\end{align}
\end{lem}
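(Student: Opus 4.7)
The plan is to run the Cauchy--Schwartz argument of Section~\ref{sec:Vanilla_Mermin_Wagner} with $T^+$ and $T^-$ in place of the naive shifts $\psi \pm \tau$, using the event $\{J^+J^- \ge s^2\}$ to control the Jacobians and $\{M \le L(\tau,\eps)\}$ to control the discrepancy between $T^\pm(\psi) - \psi$ and $\pm\tau$. Write $g(\psi) := Z_{\T_n^2,\zero,U}^{-1}\exp(-\sum_{e=(v,w)} U(\psi_v - \psi_w))$ for the density of $\mu_{\T_n^2,\zero,U}$ with respect to $d\mu(\psi):=\delta_0(d\psi_\zero)\prod_{v \ne \zero} d\psi_v$, set $E_a := \{\psi:|\psi_u| \le a\}$ and $G := \{J^+(\psi)J^-(\psi) \ge s^2\} \cap \{M(\psi) \le L(\tau,\eps)\}$, and consider the auxiliary quantity
\[
I := \int \mathbf{1}_{A \cap E_a \cap G}(\psi) \sqrt{g(T^+(\psi))g(T^-(\psi))}\,\sqrt{J^+(\psi)J^-(\psi)}\, d\mu(\psi).
\]
The goal is to sandwich $I$ between the two sides of \eqref{eq:shiftLowerbound}.

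For the lower bound on $I$: on $G$ the factor $\sqrt{J^+J^-}$ is at least $s$, so it suffices to show $\sqrt{g(T^+(\psi))g(T^-(\psi))} \ge c(U)\, g(\psi)$ on $A \cap E_a \cap G$. Setting $\delta_e := (T^+(\psi)_v - \psi_v) - (T^+(\psi)_w - \psi_w)$ for $e=(v,w)$ and invoking $T^+(\psi) - \psi = \psi - T^-(\psi)$ from \eqref{eq:T_+_T_-_relation}, a second-order Taylor expansion in each edge variable yields
\[
\tfrac{1}{2}\bigl[U(T^+(\psi)_v - T^+(\psi)_w) + U(T^-(\psi)_v - T^-(\psi)_w)\bigr] - U(\psi_v - \psi_w) \le \tfrac{1}{2}\,C_U\,\delta_e^2,
\]
with $C_U := \sup_{[-1,1]} |U''|$. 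This Taylor bound is legitimate because property \eqref{property:Lipschitz_preservation} gives $\delta_e = 0$ on large-gradient edges (those with $|\psi_v-\psi_w|\ge 1$) and maps small-gradient edges into $(-1,1)$ where $U\in C^2$. On $G$, Corollary~\ref{cor:addition_lower_bound} gives $|T^+(\psi)_v - \psi_v - \tau(v)| \le \eps/2$ uniformly in $v$, hence $|\delta_e - (\tau(v)-\tau(w))| \le \eps$, so summing over edges controls $\sum_e \delta_e^2$ by a constant depending only on $U$ (and on the fixed $\tau$, whose squared-gradient sum is finite). One concludes $\sqrt{g(T^+)g(T^-)}/g \ge c(U)$ and hence $I \ge c(U)\,s\,\P(A \cap E_a \cap G) \ge c(U)\,s\,[\P(A \cap E_a) - \P(A \cap G^c)]$, which is the right-hand side of \eqref{eq:shiftLowerbound}.

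For the upper bound, Cauchy--Schwartz with respect to $d\mu$ gives
\[
I \le \left(\int \mathbf{1}_{A \cap E_a \cap G}(\psi) g(T^+(\psi)) J^+(\psi)\, d\mu(\psi)\right)^{1/2} \left(\int \mathbf{1}_{A \cap E_a \cap G}(\psi) g(T^-(\psi)) J^-(\psi)\, d\mu(\psi)\right)^{1/2}.
\]
Each factor is handled by disintegrating $d\mu = \int d\mu_\theta \prod_{v \in V_0 \setminus \{\zero\}} d\theta_v$ over $\psi|_{V_0}$ with $\theta_\zero=0$ fixed, and applying the change-of-variables identity \eqref{eq:Jacobian_formula_properties_section} slicewise. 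Since $\tau|_{V_0}=0$, property \eqref{property:maximal_increment} forces $T^\pm(\psi)_v = \psi_v$ for $v\in V_0$, so each $d\mu_\theta$-slice is preserved by $T^\pm$, and the indicator $\mathbf{1}_A$, being $\F_0$-measurable, is a constant on each slice. On $G$, Corollary~\ref{cor:addition_lower_bound} yields $T^+(E_a \cap G) \subseteq \tilde{E}_a^+ := \{|\varphi_u - \tau(u)| \le a + \eps/2\}$, so the substitution $\varphi = T^+(\psi)$ bounds the first factor by $\P(A \cap \tilde{E}_a^+)$; the analogous substitution for $T^-$ produces $\P(A \cap \tilde{E}_a^-)$ with $\tilde{E}_a^- := \{|\varphi_u + \tau(u)| \le a + \eps/2\}$. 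Combining with the lower bound proves \eqref{eq:shiftLowerbound}.

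The chief technical obstacle is the Taylor estimate $\sqrt{g(T^+)g(T^-)} \ge c(U)\, g$ in the lower bound: it uses \eqref{property:Lipschitz_preservation} to restrict the Taylor expansion to the interval $[-1,1]$ where $U\in C^2$, and \eqref{property:addition_lower_bound} (via Corollary~\ref{cor:addition_lower_bound}) to control $\delta_e$ uniformly by $|\tau(v)-\tau(w)|+\eps$; both mechanisms break down outside $G$, which is exactly why the lemma pays the error term $\P(G^c \cap A)$ rather than omitting it. A subsidiary subtlety is the need to disintegrate $d\mu$ over $V_0$-configurations in order to align with the change-of-variables formula \eqref{eq:Jacobian_formula_properties_section}, which is stated with respect to $d\mu_\theta$ rather than $d\mu$.
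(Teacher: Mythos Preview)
Your overall architecture matches the paper's: define
\[
I=\int_{E\cap A}\sqrt{g(T^+(\psi))g(T^-(\psi))\,J^+(\psi)J^-(\psi)}\,d\lambda(\psi)
\]
with $E=\{|\psi_u|\le a,\ J^+J^-\ge s^2,\ M\le L(\tau,\eps)\}$, bound $I$ below by $c(U)\,s\,\P(E\cap A)$, bound $I$ above by Cauchy--Schwartz and the Jacobian identity~\eqref{eq:Jacobian_formula_properties_section}, and then enlarge $T^\pm(E)\cap A$ to $\{|\varphi_u\mp\tau(u)|\le a+\eps/2\}\cap A$ using properties~\eqref{property:maximal_increment} and~\eqref{property:addition_lower_bound}. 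Your upper-bound half and your handling of $A$ via $T^\pm|_{V_0}=\mathrm{id}$ are correct and essentially identical to the paper; the explicit disintegration of $d\lambda$ into $d\mu_\theta$-slices just spells out what the paper leaves implicit when it invokes~\eqref{eq:Jacobian_formula_properties_section} with the reference measure $d\lambda$.

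Where you diverge from the paper---and where your argument has a gap---is the density lower bound $\sqrt{g(T^+)g(T^-)}\ge c(U)\,g$. The paper does \emph{not} Taylor-expand and then try to sum $\delta_e^2$. It simply records that, since $U\in C^2([-1,1])$ is bounded there, one has the uniform pointwise inequality
\[
\exp\!\Big(-\tfrac12\big(U(x+r)+U(x-r)\big)\Big)\ge c(U)\exp(-U(x))\qquad\text{whenever }x\pm r\in[-1,1],
\]
and applies it edge by edge via property~\eqref{property:Lipschitz_preservation} (which guarantees either $r_e=0$ or $x_e\pm r_e\in(-1,1)$). In particular, Corollary~\ref{cor:addition_lower_bound} and the event $\{M\le L(\tau,\eps)\}$ play no role in this step in the paper's proof.

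Your Taylor route breaks at the summation. From $|\delta_e-(\tau(v)-\tau(w))|\le\eps$ you only obtain
\[
\sum_{e}\delta_e^2 \;\le\; 2\sum_e(\tau(v)-\tau(w))^2 \;+\; 2\eps^2\,|E(\T_n^2)|,
\]
and the second term is of order $n^2$. The slack $\eps/2$ furnished by Corollary~\ref{cor:addition_lower_bound} is far too coarse to be summed over all $\sim n^2$ edges, so the ``constant depending only on $U$ (and on the fixed $\tau$)'' that you claim is in fact $n$-dependent. That would make the $c(U)$ in~\eqref{eq:shiftLowerbound} decay like $\exp(-c\,n^2)$, which is fatal for the downstream applications in Section~\ref{sec:proofs_for_main_theorems}. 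You should drop the Taylor-plus-$\sum\delta_e^2$ detour and use the uniform bound directly, as the paper does.
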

\begin{proof}
  Write
  \begin{equation*}
    g(\psi):=\frac{1}{Z_{\T_n^2, \zero, U}} \exp\Bigg(-\sum_{(v,w)\in E(\T_n^2)}
  U(\psi_v - \psi_w)\Bigg)
  \end{equation*}
  for the density of the measure $\mu_{\T_n^2, \zero, U}$. Fix a
  function $\theta:V_0\to \R$ satisfying $\theta(\zero)=0$ and denote by $d\lambda$ the measure
  \begin{equation*}
    d\lambda(\psi) = \delta_{0}(d\psi_\zero) \prod_{v\in V\setminus\{\zero\}} d\psi_v.
  \end{equation*}
  Define the event
  \begin{equation*}
    E:=\{\psi\in\R^{V(\T_n^2)}\colon |\psi_u|\le a, J^+(\psi)J^-(\psi)\ge s^2, M(\psi)\le L(\tau,\eps)\}
  \end{equation*}
  and the quantity
  \begin{equation*}
    I:=\int_{E\cap A}
    \sqrt{g(T^+(\psi))g(T^-(\psi))J^+(\psi)J^-(\psi)}d\lambda(\psi).
  \end{equation*}
  We wish to bound $I$ from below and from above. We start with the
  bound from below.

  Since $U$ restricted to $[-1,1]$ is twice continuously differentiable there exists some $0<c(U)\le 1$ such that
  \begin{equation}\label{eq:U_twice_differentiable_property}
    \exp\left(-\frac{1}{2}(U(x+r)+U(x-r))\right)\ge c(U)\exp(-U(x))
  \end{equation}
  for all $x,r\in\R$ for which $x+r, x-r\in [-1,1]$.

  Abbreviate $\sigma_v:=T^+(\psi)_v - \psi_v = \psi_v - T^-(\psi)_v$ (using \eqref{eq:T_+_T_-_relation}) and observe that
  \begin{align*}
    \sqrt{g(T^+(\psi))g(T^-(\psi))} &= \frac{1}{Z_{\T_n^2, \zero, U}} \exp\Bigg(-\frac{1}{2}\sum_{(v,w)\in E(\T_n^2)}
  U(\psi_v - \psi_w + \sigma_v - \sigma_w) + U(\psi_v -
  \psi_w - \sigma_v + \sigma_w)\Bigg)\ge\\
  &\ge \frac{c(U)}{Z_{\T_n^2, \zero, U}}\exp\Bigg(-\sum_{(v,w)\in E(\T_n^2)}
  U(\psi_v - \psi_w)\Bigg) = c(U)g(\psi),
  \end{align*}
  where we have used
  property~\eqref{property:Lipschitz_preservation} from
  Section~\ref{sec:addition_algorithm_prop} to justify our use of
  \eqref{eq:U_twice_differentiable_property}.
  Together with the definition of the event $E$ this implies that
  \begin{equation}\label{eq:I_lower_bound}
    I \ge c(U)s\int_{E\cap A} g(\psi)d\lambda(\psi).
  \end{equation}

  To bound $I$ from above we use the Cauchy-Schwartz inequality and the Jacobian identity in \eqref{eq:Jacobian_formula_properties_section} to
  obtain
  \begin{align}
    I&\le \left(\int_{E\cap A}
    g(T^+(\psi))J^+(\psi)d\lambda(\psi)\int_{E\cap A}
    g(T^-(\psi))J^-(\psi)d\lambda(\psi)\right)^{\frac{1}{2}} =\nonumber\\
    &=\left(\int_{T^+(E\cap A)} g(\psi)d\lambda(\psi)\int_{T^-(E\cap A)}
    g(\psi)d\lambda(\psi)\right)^{\frac{1}{2}}.\label{eq:I_upper_bound}
  \end{align}
Comparing \eqref{eq:I_lower_bound} and \eqref{eq:I_upper_bound} and
recalling that $\varphi$ is sampled from the probability
distribution $\mu_{\T_n^2, \zero, U}$ we conclude that
\begin{equation}\label{eq:main_use_of_transformation_with_events}
  \left[\P\big(\varphi\in T^+(E\cap A)\big) \P\big(\varphi\in T^-(E\cap A)\big)\right]^{\frac{1}{2}}\ge c(U)s\P\big(\varphi\in E\cap A\big).
\end{equation}
We continue by noting that by the definition of $E$,
\begin{equation*}
  \P\big(\varphi\in E\cap A\big) \ge \P\big(\{|\varphi_u|\le a\}\cap A\big) - \P\big((\{J^+(\varphi)J^-(\varphi)<s^2\}\cup\{M(\varphi)> L(\tau,\eps)\})\cap A\big).
\end{equation*}
In addition, we recall from
properties~\eqref{property:maximal_increment} and
\eqref{property:addition_lower_bound} of $T^+$ in
Section~\ref{sec:addition_algorithm_prop} that if $\psi$ satisfies
$|\psi_u|\le a$ and $M(\psi)\le L(\tau,\eps)$ then $-a-
\frac{\eps}{2}\le T^+(\psi)_u-\tau(u)\le a$ and a similar relation
for $T^-$ by \eqref{eq:T_+_T_-_relation}. In addition, since
$A\in\F_0$, properties (1) and (2) imply that $A = T^+(A) = T^-(A)$.
Therefore, using that $T^+$ and $T^-$ are one-to-one,
\begin{multline*}
  \P\big(\varphi\in T^+(E\cap A)\big) \P\big(\varphi\in T^-(E\cap A)\big)=\\
  =\P\big(\varphi\in T^+(E)\cap A\big) \P\big(\varphi\in T^-(E)\cap A\big)\le\\
  \le \P\big(\{|\varphi_{u} -
\tau(u)|\le a + \frac{\eps}{2}\}\cap A\big) \P\big(\{|\varphi_{u} +
\tau(u)|\le a + \frac{\eps}{2}\}\cap A\big).
\end{multline*}
Combining the last two inequalities with
\eqref{eq:main_use_of_transformation_with_events} establishes the
lemma.
\end{proof}

Our next lemma bounds the error terms appearing on the right-hand
side of \eqref{eq:shiftLowerbound}.
\begin{lem}\label{lem:Jacobian_and_bad_edge_diameter}
  For any $s>0$ we have
  \begin{equation*}
    \P\left(\{J^+(\varphi)J^-(\varphi) <
s^2\}\cup\{M(\varphi)>L(\tau,\eps)\}\right)\le (2n)^2
2^{-L(\tau,\eps)} + \frac{4\sum_{v\in V(\T_n^2)} \sum_{k=0}^{\infty}
2^{-k}
\tau'\left(v,k+1\right)^2}{\eps^2\log\left(\frac{1}{s}\right)}.
  \end{equation*}
\end{lem}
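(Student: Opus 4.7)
The plan is to prove the bound by a union bound, splitting the event into the two natural pieces
\[
\bigl\{M(\varphi)>L(\tau,\eps)\bigr\}\quad\text{and}\quad \bigl\{J^+(\varphi)J^-(\varphi)<s^2\bigr\}\cap\bigl\{M(\varphi)\le L(\tau,\eps)\bigr\},
\]
and estimating each piece separately. The $(2n)^2\cdot 2^{-L(\tau,\eps)}$ term in the stated bound will come from the first event, and the Markov term from the second.

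For the first event I would use the controlled gradients hypothesis on $U$ (recall that we assumed $(1/8,1-\eps)$-controlled gradients, and $\EC(\varphi)$ from Section~\ref{sec:addition_algorithm_prop} is exactly $\EC(\varphi,1-\eps)$). If $M(\varphi)>L(\tau,\eps)$ then $\EC(\varphi)$ contains a self-avoiding path of at least $L(\tau,\eps)+1$ edges. Since each vertex of $\T_n^2$ has degree~$4$, the number of self-avoiding paths of length $k$ starting at a given vertex is at most $4\cdot 3^{k-1}$, so the number of self-avoiding paths of length $L(\tau,\eps)+1$ in $\T_n^2$ is at most $(2n)^2\cdot 4\cdot 3^{L(\tau,\eps)}$. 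Combined with the probability bound $(1/8)^{L(\tau,\eps)+1}$ per path coming from $(\delta,L)$-controlled gradients with $\delta=1/8$, and using $(3/4)^{L(\tau,\eps)}\le 1$, this yields $\P(M(\varphi)>L(\tau,\eps))\le (2n)^2\cdot 2^{-L(\tau,\eps)}$.

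For the second event I would invoke property~\eqref{property:Jacobian_estimate}: on $\{M(\varphi)\le L(\tau,\eps)\}$,
\[
J^+(\varphi)J^-(\varphi)<s^2\quad\Longrightarrow\quad \sum_{v\in V(\T_n^2)}\tau'\!\bigl(v,1+Y_v(\varphi)\bigr)^2>\eps^2\log(1/s),
\]
where $Y_v(\varphi):=\max_{w\sim v}r(\varphi,w)$. By Markov's inequality the probability of the RHS is at most $\eps^{-2}\log(1/s)^{-1}$ times $\sum_v\E[\tau'(v,1+Y_v)^2]$, so it remains to prove the per-vertex inequality
\[
\E\bigl[\tau'(v,1+Y_v)^2\bigr]\le 4\sum_{k=0}^{\infty}2^{-k}\tau'(v,k+1)^2.
\]
I would derive this by summation by parts: setting $\tau'(v,0):=0$ and writing $\tau'(v,1+Y_v)^2=\sum_{j\ge 0}[\tau'(v,j+1)^2-\tau'(v,j)^2]\,\mathbf{1}_{Y_v\ge j}$ gives
\[
\E[\tau'(v,1+Y_v)^2]=\sum_{j\ge 0}[\tau'(v,j+1)^2-\tau'(v,j)^2]\,\P(Y_v\ge j).
\]
A tail bound on $Y_v$ follows from the same percolation counting: $\{Y_v\ge j\}$ forces a self-avoiding path of length $j$ in $\EC(\varphi)$ starting at one of the four neighbors of $v$, so $\P(Y_v\ge j)\le 4\cdot 4\cdot 3^{j-1}\cdot(1/8)^j=\tfrac{16}{3}(3/8)^j\le \tfrac{16}{3}\cdot 2^{-j}$ for $j\ge 1$ (and trivially for $j=0$). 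Finally the identity
\[
\sum_{j\ge 0}2^{-j}\bigl[\tau'(v,j+1)^2-\tau'(v,j)^2\bigr]=\tfrac{1}{2}\sum_{k\ge 0}2^{-k}\tau'(v,k+1)^2,
\]
obtained by swapping the order of summation, yields the bound with constant $\tfrac{8}{3}\le 4$.

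The only slightly delicate step is the summation-by-parts manipulation in the last paragraph, which has to match the stated constant~$4$; everything else is a straightforward union bound plus standard self-avoiding walk counting in $\T_n^2$ together with Markov's inequality and property~\eqref{property:Jacobian_estimate}. Combining the two estimates gives exactly the bound stated in Lemma~\ref{lem:Jacobian_and_bad_edge_diameter}.
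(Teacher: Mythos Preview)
Your proposal is correct and follows essentially the same route as the paper: the same two-piece split, the same self-avoiding-path percolation count (the paper uses the cruder $4^k$ instead of $4\cdot 3^{k-1}$, but this is immaterial), and the same use of property~\eqref{property:Jacobian_estimate} followed by Markov's inequality.

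The one place you work slightly harder than necessary is the per-vertex expectation bound. The paper avoids summation by parts entirely: it replaces $\tau'(v,1+\max_{w\sim v}r(\varphi,w))^2$ by the larger $\sum_{w\sim v}\tau'(v,1+r(\varphi,w))^2$, writes each summand as $\sum_{k\ge 0}\tau'(v,k+1)^2\,\P(r(\varphi,w)=k)$, and then simply uses $\P(r(\varphi,w)=k)\le\P(r(\varphi,w)\ge k)\le 2^{-k}$. Summing over the four neighbours gives the constant~$4$ directly. Your summation-by-parts route is perfectly valid (and even yields the slightly better constant $8/3$), but the paper's argument is a one-liner.
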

\begin{proof}
  Given a vertex $v\in V(\T_n^2)$ and $k\ge 1$ denote by $\mathcal{P}_{v,k}$ the set of all simple paths in $\T_n^2$ starting at $v$ and having length $k$.
  Here, by such a path we mean a vector $(e_1,
  \ldots, e_k)\subseteq E(\T_n^2)$ of distinct edges with $e_i = (v_i, v_{i+1})$ and $v=v_1$.
  Observe that, trivially, $|\mathcal{P}_{v,k}|\le 4^k$ for all $v$
  and $k$. Now note that since $U$ has $(1/8, 1-\eps)$-controlled
  gradients on $\T_n^2$ we have for each $v\in V(\T_n^2)$ and $k\ge 1$,
  \begin{equation}\label{eq:bad_edges_connectivity_estimate}
    \P(r(\varphi, v)\ge k) \le \sum_{(e_1,\ldots, e_k)\in \mathcal{P}_{v,k}}
    \P(e_1,\ldots, e_k\in \EC(\varphi))\le 4^k \left(\frac{1}{8}\right)^k = 2^{-k}.
  \end{equation}

  Observe that
  \begin{multline}\label{eq:Jacobian_and_diameter_estimate}
    \P\left(\{J^+(\varphi)J^-(\varphi) <
s^2\}\cup\{M(\varphi)>L(\tau,\eps)\}\right) =\\
= \P\left(M(\varphi)>L(\tau,\eps)\right) +
\P\left(\{J^+(\varphi)J^-(\varphi) < s^2\}\cap\{M(\varphi)\le
L(\tau,\eps)\}\right).
  \end{multline}
  We estimate each of the terms on the right-hand side separately.

  First, using \eqref{eq:bad_edges_connectivity_estimate} we have
  \begin{equation}\label{eq:M_L_comparison_estimate}
    \P\left(M(\varphi)>L(\tau,\eps)\right)\le |V(\T_n^2)|
    2^{-L(\tau,\eps)} \le (2n)^2 2^{-L(\tau,\eps)},
  \end{equation}
  observing that the inequality holds trivially if $L(\tau,\eps)$ is
  zero or negative.

  Second, using property~\eqref{property:Jacobian_estimate} from
  Section~\ref{sec:addition_algorithm_prop} we see that
  \begin{equation}\label{eq:using_the_Jacobian_estimate}
    \P\left(\{J^+(\varphi)J^-(\varphi) < s^2\}\cap\{M(\varphi)\le
L(\tau,\eps)\}\right)\le \P\left(\sum_{v\in V(\T_n^2)}
\tau'\left(v,1+\max_{w\sim
v}r(\varphi,w)\right)^2>\eps^2\log\left(\frac{1}{s}\right)\right).
  \end{equation}
  Now,
  \begin{align*}
    \E\left(\sum_{v\in V(\T_n^2)}
\tau'\left(v,1+\max_{w\sim v}r(\varphi,w)\right)^2\right) &\le \E
\left(\sum_{v\in V(\T_n^2)} \sum_{w\sim
v}\tau'\left(v,1+r(\varphi,w)\right)^2\right)=\\
&= \sum_{v\in V(\T_n^2)} \sum_{w\sim v} \sum_{k=0}^{\infty}
\tau'\left(v,1+k\right)^2\P(r(\varphi,w)=k)
  \end{align*}
  and using again \eqref{eq:bad_edges_connectivity_estimate} we conclude
  that
  \begin{equation*}
    \E\left(\sum_{v\in V(\T_n^2)}
\tau'\left(v,1+\max_{w\sim v}r(\varphi,w)\right)^2\right) \le
4\sum_{v\in V(\T_n^2)} \sum_{k=0}^{\infty} 2^{-k}
\tau'\left(v,1+k\right)^2.
  \end{equation*}
  Thus, Markov's inequality and
  \eqref{eq:using_the_Jacobian_estimate} show that
  \begin{equation*}
    \P\left(\{J^+(\varphi)J^-(\varphi) < s^2\}\cap\{M(\varphi)\le
L(\tau,\eps)\}\right)\le \frac{4\sum_{v\in V(\T_n^2)}
\sum_{k=0}^{\infty} 2^{-k}
\tau'\left(v,1+k\right)^2}{\eps^2\log\left(\frac{1}{s}\right)}.
  \end{equation*}
  The lemma follows by combining this estimate with
  \eqref{eq:Jacobian_and_diameter_estimate} and
  \eqref{eq:M_L_comparison_estimate}.
\end{proof}

\subsection{Fluctuation bounds}
In this section we prove Theorem~\ref{thm:variance_lower_bound}.

Fix $v\in V(\T_n^2)\setminus\{\zero\}$. Define the increasing
function $h:[0,\infty)\to[0,\infty)$ by
\begin{equation*}
  h(x) := \frac{\log(1 + x)}{\sqrt{\log(1 + \|v\|_1)}}
\end{equation*}
and the function $\eta:V(\T_n^2)\to[0,\infty)$ by
\begin{equation}\label{eq:eta_def}
\eta(w):=\begin{cases}0&\|w\|_1\le
\sqrt{\|v\|_1}\\h(\|w\|_1) - h(\sqrt{\|v\|_1})&\sqrt{\|v\|_1}\le \|w\|_1 \le \|v\|_1\\
  h(\|v\|_1) - h(\sqrt{\|v\|_1})& \|w\|_1\ge \|v\|_1\end{cases}.
\end{equation}
We aim to use the lemmas of the previous section with the $\tau$
function a constant multiple of $\eta$. The above definition is
chosen so that we may control the quantities appearing in
Lemma~\ref{lem:Jacobian_and_bad_edge_diameter}. The first case
allows us to lower bound the function $L$ while the second and third
cases ensure that $\eta$ is slowly varying. The next lemma
formalizes these ideas.
Write, as in \eqref{eq:tau_prime_def},
\begin{equation}\label{eq:eta_prime_def}
\eta'(w,k):=\max(\eta(w) - \eta(u)\colon u\in V(\T_n^2),\,
d_{\T_n^2}(w,u)\le k\},\quad w\in V(\T_n^2),\, k\ge 1.
\end{equation}
\begin{lem}\label{lem:eta_Jacobian_and_bad_edge_bound}
  There exists an absolute constant $C>0$ such that
  \begin{equation*}
    \sum_{w\in V(\T_n^2)} \sum_{k=0}^{\infty} 2^{-k}
\eta'\left(w,k+1\right)^2 \le C.
  \end{equation*}
  For any $\alpha>0$ we have
  \begin{equation}\label{eq:L_with_alpha_and_eta}
    L(\alpha\cdot\eta,\eps)\ge
    \left\lfloor(1+\sqrt{\|v\|_1})\left(\exp\left(\frac{\eps\sqrt{\log(1
    + \|v\|_1)}}{2\alpha}\right) - 1\right)\right\rfloor - 1.
  \end{equation}
\end{lem}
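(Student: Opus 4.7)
Both parts rest on a clean structural identity for $\eta'(w,k)$. Since $\eta$ depends only on $\|w\|_1$, is monotone non-decreasing in it, equals zero on $\{\|w\|_1 \le \sqrt{\|v\|_1}\}$, equals $h(\|w\|_1) - h(\sqrt{\|v\|_1})$ on the annulus $\{\sqrt{\|v\|_1} < \|w\|_1 \le \|v\|_1\}$, and saturates at $h(\|v\|_1)-h(\sqrt{\|v\|_1})$ for $\|w\|_1 \ge \|v\|_1$; and since a single edge in $\T_n^2$ changes $\|\cdot\|_1$ by exactly one (and from any $w$ one may reach a vertex of any prescribed $\|\cdot\|_1$ in $[\max(\|w\|_1-k,0),\|w\|_1]$ in at most $k$ steps by walking one coordinate at a time towards the origin), the minimum of $\eta$ on the ball of radius $k$ around $w$ is attained at the vertex with smallest $\|\cdot\|_1$ in that ball. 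This yields
\[
\eta'(w,k) \;=\; h\bigl(\min(\|w\|_1,\|v\|_1)\bigr) - h\bigl(\max(\|w\|_1-k,\sqrt{\|v\|_1})\bigr)
\]
when $\|w\|_1 > \sqrt{\|v\|_1}$, and $\eta'(w,k)=0$ otherwise.

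For the bound on $L(\alpha\eta,\eps)$, I would simply maximize this identity over $w$. Because $h$ is non-decreasing and concave, $\max_{w\in V(\T_n^2)}\eta'(w,k) \le h(\sqrt{\|v\|_1}+k) - h(\sqrt{\|v\|_1})$, with the bound attained at any $w$ with $\|w\|_1=\sqrt{\|v\|_1}+k$ (whenever this falls in the annulus). Demanding $\alpha$ times this to be at most $\eps/2$ translates into $\log\bigl((1+\sqrt{\|v\|_1}+k)/(1+\sqrt{\|v\|_1})\bigr) \le \eps\sqrt{\log(1+\|v\|_1)}/(2\alpha)$; exponentiating, isolating $k$, and taking the floor identifies the largest admissible integer, and subtracting $1$ per the definition~\eqref{eq:L_tau_eps_def} of $L$ gives \eqref{eq:L_with_alpha_and_eta}.

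For the sum bound, I would split the outer sum by $\|w\|_1$. The region $\|w\|_1\le\sqrt{\|v\|_1}$ contributes nothing. On the middle annulus, concavity of $h$ (whose derivative is $\bigl((1+x)\sqrt{\log(1+\|v\|_1)}\bigr)^{-1}$) gives
\[
\eta'(w,k+1) \;\le\; \frac{k+1}{\bigl(1+\|w\|_1-(k+1)\bigr)\sqrt{\log(1+\|v\|_1)}}
\]
so long as the ``front'' $\|w\|_1-(k+1)$ has not yet crossed $\sqrt{\|v\|_1}$; once it does, $\eta'(w,k+1)$ caps at $\eta(w)$ and the remaining $k$'s contribute only an exponentially small tail by the $2^{-k}$ factor. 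Summing the convergent geometric series $\sum_k 2^{-k}(k+1)^2$ bounds $\sum_k 2^{-k}\eta'(w,k+1)^2$ by $O\bigl(1/(\|w\|_1^2\log(1+\|v\|_1))\bigr)$ on this annulus; then summing over $w$ using $|\{w\colon\|w\|_1=\ell\}|\le 4\ell$ produces a telescoping contribution of $O\bigl(\log(\|v\|_1/\sqrt{\|v\|_1})/\log(1+\|v\|_1)\bigr)=O(1)$. On the outer range $\|w\|_1 > \|v\|_1$ the identity forces $\eta'(w,k+1)=0$ unless $k+1\ge\|w\|_1-\|v\|_1$; the resulting factor $2^{-(\|w\|_1-\|v\|_1)}$ absorbs the $O(n)$ from the vertex count, and a second application of concavity near $\|v\|_1$ shows this outer range contributes only $O\bigl(1/(\|v\|_1\log\|v\|_1)\bigr)$.

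The main obstacle is the bookkeeping in the first bound: tracking, for each pair $(w,k)$, whether the front at $\|w\|_1-(k+1)$ has crossed $\sqrt{\|v\|_1}$ (so that $\eta'(w,k+1)$ caps at $\eta(w)$) and whether $\|w\|_1$ lies in the annulus or in the saturated outer region, and then verifying that each of the resulting sub-regions contributes only an absolute constant uniformly in $n$ and $v$. The $L$ bound reduces to a one-line manipulation once the closed-form expression for $\eta'(w,k)$ is in hand.
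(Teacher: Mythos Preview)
Your proposal is correct and follows essentially the same strategy as the paper: exploit that $\eta$ is radial and non-decreasing to bound $\eta'(w,k+1)$ by an $h$-difference, use concavity of $h$ (equivalently, the explicit $\log$ formula), count vertices by $\|\cdot\|_1$-level, and sum the geometric series in $k$; the $L$-bound reduces in both cases to solving $h(x+k)-h(x)\le \eps/(2\alpha)$ at $x=\sqrt{\|v\|_1}$. The only notable organizational difference is that the paper does not carry the $\sqrt{\|v\|_1}$ cutoff through the sum estimate at all: it simply uses the cruder bound $\eta'(w,k+1)\le h(\|w\|_1)-h(\max(\|w\|_1-(k+1),0))$, then substitutes $t=\|w\|_1-(k+1)$ and bounds $\sum_t\sum_k 2^{-k}(t+k+1)\log^2\bigl((t+k+2)/(t+1)\bigr)$ directly, which sidesteps exactly the ``front has/has not crossed $\sqrt{\|v\|_1}$'' bookkeeping you flag as the main obstacle.
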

\begin{proof}
The fact that $\eta(w)$ depends only on $\|w\|_1$ and $\eta(w_1)\ge
\eta(w_2)$ when $\|w_1\|_1\ge \|w_2\|_1$ shows that for each $w\in
V(\T_n^2)$ and $k\ge 0$ we have
\begin{equation*}
  0\le \eta'(w,k+1)\le \begin{cases}
    0&\|w\|_1 > \|v\|_1 + k+1\\
    h(\|w\|_1) - h(\|w\|_1 - (k+1))& k+1\le\|w\|_1
\le \|v\|_1 + k+1\\
    h(\|w\|_1) - h(0)& \|w\|_1 < k+1
  \end{cases}.
\end{equation*}
By considering separately the latter two cases in the above
inequality we have
\begin{align*}
    &\sum_{w\in V(\T_n^2)} \sum_{k=0}^{\infty} 2^{-k}
\eta'\left(w,k+1\right)^2 \le\\
&\le 4\sum_{t=0}^{\|v\|_1}\sum_{k=0}^\infty 2^{-k}(t+k+1)(h(t+k+1) -
h(t))^2 + 4\sum_{m=1}^\infty \sum_{k=m}^\infty 2^{-k}m
(h(m)-h(0))^2,
\end{align*}
where we have also used that there are at most $4m$ vertices $w\in
V(\T_n^2)$ with $\|w\|_1 = m$ (strict inequality is possible when
$m\ge n$). Continuing the last inequality we obtain

%

%
  \begin{align*}
    &\sum_{w\in V(\T_n^2)} \sum_{k=0}^{\infty} 2^{-k}
\eta'\left(w,k+1\right)^2 \le\\
&\le 4\sum_{t=0}^{\|v\|_1}\sum_{k=0}^\infty
2^{-k}(t+k+1)(h(t+k+1) - h(t))^2 + 4\sum_{m=1}^\infty 2^{-m+1}m
(h(m)-h(0))^2\le\\
&\le 8\sum_{t=0}^{\|v\|_1}\sum_{k=0}^\infty 2^{-k}(t+k+1)(h(t+k+1) -
h(t))^2 =\\
&= \frac{8}{\log(1 + \|v\|_1)} \sum_{t=0}^{\|v\|_1}\sum_{k=0}^\infty
2^{-k}(t+k+1)\log^2\left(\frac{t + k + 2}{t + 1}\right)\le\\
&\le \frac{C'}{\log(1 +
\|v\|_1)}\sum_{t=0}^{\|v\|_1}(t+1)\log^2\left(\frac{t+2}{t+1}\right)\le
\frac{C'}{\log(1 + \|v\|_1)}\sum_{t=0}^{\|v\|_1}\frac{1}{t+1}\le C.
  \end{align*}
  for some absolute constants $C, C'>0$.

  We note that for any $x,s,k\ge 0$ we have that
  \begin{equation*}
    h(x + k) - h(x) \le s\quad\text{if and only if}\quad k\le
    (1+x)\big(\exp\big(s\sqrt{\log(1+\|v\|_1)}\big)-1\big).
  \end{equation*}
  Thus, \eqref{eq:L_with_alpha_and_eta} follows from the definitions
  \eqref{eq:L_tau_eps_def} and \eqref{eq:eta_def} of $L(\tau,\eps)$
  and $\eta$.
\end{proof}
\begin{proof}[Proof of Theorem~\ref{thm:variance_lower_bound}]
Assume that $\|v\|_1\ge (\log n)^2$. It suffices to prove
\eqref{eq:anti-concentration_estimate} and
\eqref{eq:large_deviation_estimate} as \eqref{eq:variance_estimate}
is an immediate consequence of the case $t=1$ of
\eqref{eq:large_deviation_estimate} and the fact that $\E\varphi_v =
0$ by symmetry.

Let $N(U)>0$ be large enough for the following derivations. We first
claim that choosing $c(U)$ sufficiently small and $C(U)$
sufficiently large the theorem holds when $n\le N(U)$. Indeed, this
is clear for \eqref{eq:anti-concentration_estimate} as we may make
the right-hand side greater than $1$ by choosing $C(U)$
appropriately. To see this for \eqref{eq:large_deviation_estimate}
first note that our assumption that the potential $U$ restricted to
$[-1,1]$ is bounded away from infinity implies that
$\P(|\varphi_v|\ge 0.99\|v\|_1)>0$. Thus it suffices to check that
$\frac{(1+\sqrt{\|v\|_1})\sqrt{\log(1+\|v\|_1)}}{1+\log n}\le
0.99\|v\|_1$ and this follows, using our assumption that $n\ge 2$,
as $\frac{(1 + \sqrt{x})\sqrt{\log(1+x)}}{1 + \log 2}\le 0.99 x$ for
$x\ge 1$.

%

Assume for the rest of the proof that $n>N(U)$. Consequently, since
$\|v\|_1\ge (\log n)^2$, we have
\begin{equation}\label{eq:eta_v_lower_bound}
  \eta(v) \ge \frac{1}{4}\left(\sqrt{\log(1 + \|v\|_1)} + 1\right).
\end{equation}

We start with the proof of \eqref{eq:large_deviation_estimate}. Let
$1\le t\le \frac{1+\sqrt{\|v\|_1}}{\log n}$. If $\P(|\varphi_v|\ge
t\sqrt{\log(1+\|v\|_1)})\ge \frac{1}{2}$ there is nothing to prove.
Thus we suppose that $\P(|\varphi_v|\le t\sqrt{\log(1+\|v\|_1)})\ge
\frac{1}{2}$. Pick the function $\tau:=8t\cdot\eta$ so that, since
$\eps\le \frac{1}{2}$, we have $\tau(v) \ge 2t\sqrt{\log(1 +
\|v\|_1)} + \frac{\eps}{2}$ by \eqref{eq:eta_v_lower_bound}.
Combining the arithmetic-geometric mean inequality with
Lemma~\ref{lem:lowerboundForShiftedConfiguration}, taking $A$ to be
the full event, we have
\begin{align}
  \frac{1}{2}&\P\left(|\varphi_v|\ge t\sqrt{\log(1+\|v\|_1)}\right) = \frac{1}{2}\left[\P\left(\varphi_v\ge t\sqrt{\log(1+\|v\|_1)}\right) +
  \P\left(\varphi_v\le -t\sqrt{\log(1+\|v\|_1)}\right)\right] \ge\nonumber\\
  &\ge \left[\P\left(\varphi_v\ge t\sqrt{\log(1+\|v\|_1)}\right) \P\left(\varphi_v\le
  -t\sqrt{\log(1+\|v\|_1)}\right)\right]^{1/2}\ge\nonumber\\
  &\ge \left[\P\left(|\varphi_v - \tau(v)|\le t\sqrt{\log(1+\|v\|_1)}+\frac{\eps}{2}\right) \P\left(|\varphi_v + \tau(v)|\le
  t\sqrt{\log(1+\|v\|_1)}+\frac{\eps}{2}\right)\right]^{1/2} \ge\nonumber\\
  &\ge c(U) s
  \left[\P\left(|\varphi_v|\le t\sqrt{\log(1+\|v\|_1)}\right) -
\P\left(\{J^+(\varphi)J^-(\varphi) <
s^2\}\cup\{M(\varphi)>L(\tau,\eps)\}\right)\right] \ge\nonumber\\
&\ge c(U)s\left[\frac{1}{2} - \P\left(\{J^+(\varphi)J^-(\varphi) <
s^2\}\cup\{M(\varphi)>L(\tau,\eps)\}\right)\right],\label{eq:usage_of_lower_bound_lemma}
\end{align}
where $s>0$ is arbitrary. By
Lemma~\ref{lem:Jacobian_and_bad_edge_diameter} and
Lemma~\ref{lem:eta_Jacobian_and_bad_edge_bound} we have
\begin{equation*}
  \P\left(\{J^+(\varphi)J^-(\varphi) <
s^2\}\cup\{M(\varphi)>L(\tau,\eps)\}\right) \le (2n)^2
2^{-L(\tau,\eps)} + \frac{256C t^2}{\eps^2 \log(1/s)}.
\end{equation*}
Furthermore, our assumption that $t\le \frac{1+\sqrt{\|v\|_1}}{\log
n}$ and $\|v\|_1\ge (\log n)^2$ combined with
\eqref{eq:L_with_alpha_and_eta} yields that
\begin{equation}\label{eq:L_tau_eps_usage_bound}
  (2n)^2
2^{-L(\tau,\eps)} \le \frac{1}{2n}\le \frac{1}{8}
\end{equation}
when $N(U)$ is sufficiently large. Thus, choosing $s =
\exp\left(-\frac{1024 C t^2}{\eps^2}\right)$ and combining the last
inequalities we conclude that
\begin{equation*}
  \P\left(|\varphi_v|\ge t\sqrt{\log(1+\|v\|_1)}\right)\ge
  c'(U)\exp(-C(U)t^2)
\end{equation*}
for some $c'(U),C(U)>0$ depending only on $U$.

We now prove \eqref{eq:anti-concentration_estimate}. We may suppose
that $r\le \frac{1}{4}\sqrt{\log(1 +\|v\|_1)}$ since otherwise
\eqref{eq:anti-concentration_estimate} is trivial. Fix $1\le r\le
\frac{1}{4}\sqrt{\log(1 +\|v\|_1)}$. For $k\ge 1$ choose
$\tau:=\frac{5rk}{\eta(v)}\cdot \eta$, so that $\tau(v) = 5rk$, to
obtain similarly to \eqref{eq:usage_of_lower_bound_lemma},
\begin{align*}
  \frac{1}{2}&\left[\P(|\varphi_v - 5rk|\le r+1) + \P(|\varphi_v+5rk|\le r+1)\right] \ge\\
  &\ge \left[\P(|\varphi_v - 5rk|\le r+1)\P(|\varphi_v+5rk|\le
  r+1)\right]^{1/2}\ge\\
  &\ge \left[\P\left(|\varphi_v - \tau(v)|\le r+\frac{\eps}{2}\right) \P\left(|\varphi_v + \tau(v)|\le
  r+\frac{\eps}{2}\right)\right]^{1/2} \ge\\
  &\ge \frac{1}{2}c(U)
  \left[\P\left(|\varphi_v|\le r\right) -
\P\left(\{J^+(\varphi)J^-(\varphi) <
\frac{1}{4}\}\cup\{M(\varphi)>L(\tau,\eps)\}\right)\right].
\end{align*}
Assume $k\le \frac{\eta(v)}{r}$ (recalling that $\eta(v)>r$ by our
assumption that $r\le \frac{1}{4}\sqrt{\log(1 +\|v\|_1)}$ and
\eqref{eq:eta_v_lower_bound}), so that by
\eqref{eq:L_with_alpha_and_eta} and our assumption that $N(U)$ is
large we have that \eqref{eq:L_tau_eps_usage_bound} holds. Hence by
Lemma~\ref{lem:Jacobian_and_bad_edge_diameter},
Lemma~\ref{lem:eta_Jacobian_and_bad_edge_bound} and the fact that
$r,k\ge 1$,
\begin{equation*}
  \P\left(\{J^+(\varphi)J^-(\varphi) <
\frac{1}{4}\}\cup\{M(\varphi)>L(\tau,\eps)\}\right) \le \frac{1}{2n}
+ C'(U)\left(\frac{r k}{\eta(v)}\right)^2 \le C''(U)\left(\frac{r
k}{\eta(v)}\right)^2
\end{equation*}
for some constants $C'(U), C''(U)>0$ (depending on $U$ through
$\eps$). Thus, in particular, for all $1\le k\le
\left(\frac{\eta(v)}{r}\right)^{2/3}$ we have
\begin{equation*}
  \frac{1}{2}\left[\P(|\varphi_v - 5rk|\le r+1) + \P(|\varphi_v+5rk|\le
  r+1)\right]\ge \frac{1}{2}c(U)\left[\P\left(|\varphi_v|\le
  r\right) - C''(U)\left(\frac{r}{\eta(v)}\right)^{2/3}\right].
\end{equation*}
Summing over $k$ and using that the sum of probabilities of disjoint
events is at most one yields
\begin{equation*}
  c(U)\bigg\lfloor \left(\frac{\eta(v)}{r}\right)^{2/3}\bigg\rfloor \left[\P\left(|\varphi_v|\le
  r\right) - C''(U)\left(\frac{r}{\eta(v)}\right)^{2/3}\right]\le 1.
\end{equation*}
Since $\eta(v)>r$ by our assumption that $r\le
\frac{1}{4}\sqrt{\log(1 +\|v\|_1)}$ and \eqref{eq:eta_v_lower_bound}
it follows that
\begin{equation*}
  \P\left(|\varphi_v|\le
  r\right) \le \left(\frac{2}{c(U)} +
  C''(U)\right)\left(\frac{r}{\eta(v)}\right)^{2/3}.
\end{equation*}
Together with \eqref{eq:eta_v_lower_bound} this proves
\eqref{eq:anti-concentration_estimate}.
\end{proof}

\subsection{Maximum}
In this section we prove Theorem~\ref{thm:maximum_lower_bound}.

Let $\rho(U)>0$ be a constant to be chosen later, depending only on
$U$ and small enough for the following derivations. We may choose
$c(U)$ sufficiently small so that the theorem holds when $n\le
\exp(1/\rho(U)^2)$ and thus we assume that
\begin{equation}\label{eq:n_rho_relation}
n\ge \exp\left(\frac{1}{\rho(U)^2}\right).
\end{equation}
Fix a collection of arbitrary vertices $u_1,\ldots, u_n\in
V(\T_n^2)$ satisfying $\|u_i\|_1\ge \frac{n}{2}$ and
$d_{\T_n^2}(u_i, u_j)>2 n^{1/3}$ when $i\neq j$. Define the events,
for $1\le i\le n$,
\begin{align*}
  B_i&:=\{|\varphi_{u_i}| \ge \rho(U) \log n\},\\
  A_i&:=\cap_{1\le j<i} B_j^c,
\end{align*}
where we mean that $A_1$ is the full event. We have
\begin{equation}\label{eq:B_i_union_expression}
  \P\left(\max_{v\in V(\T_n^2)} |\varphi_v|\ge \rho(U)\log n\right)\ge \P(\cup_{i=1}^n B_i) = \P(B_1) + \P(B_2\cap A_2) + \cdots + \P(B_n\cap A_n)
\end{equation}
and we aim to use Lemma~\ref{lem:lowerboundForShiftedConfiguration}
to estimate the summands on the right-end side. Let $v_0:=(\lfloor
n^{1/3}\rfloor, 0)$ and let $\eta:V(\T_n^2)\to[0,\infty)$ be the
function defined by \eqref{eq:eta_def} with $v=v_0$. Noting that
$\eta$ takes its maximal value at $v_0$ we may define
$\eta_i:V(\T_n^2)\to[0,\infty)$, $1\le i\le n$, by
\begin{equation}\label{eq:eta_i_def}
  \eta_i(w):=\eta(v_0) - \eta(w - u_i),
\end{equation}
where $w-u_i$ is the vertex in $\T_n^2$ obtained by doing the
coordinate-wise difference modulo $2n$. We define also the functions
$\tau_i:V(\T_n^2)\to[0,\infty)$ by
\begin{equation*}
  \tau_i(w):= 16\rho(U)\sqrt{\log n}\cdot \eta_i(w).
\end{equation*}
\begin{lem}
  For all $1\le i\le n$ we have
  \begin{equation}\label{eq:tau_i_disjoint_support}
    \tau_i(w) = 0\quad\text{when}\quad d_{\T_n^2}(w,u_i)\ge \lfloor
    n^{1/3} \rfloor.
  \end{equation}
  In addition, if $\rho(U)$ is sufficiently small then
  \begin{align}
    \tau_i(u_i)&\ge 2\rho(U)\log n + \frac{\eps}{2}\label{eq:tau_i_u_i_bound},\\
    L(\tau_i, \eps)&\ge n^{1/6}\label{eq:tau_i_bad_diameter_bound}
  \end{align}
  and
  \begin{equation}
    \sum_{w\in V(\T_n^2)} \sum_{k=0}^{\infty} 2^{-k} \tau_i'\left(w,k+1\right)^2 \le C\rho(U)^2\log
    n\label{eq:tau_i_Jacobian_preliminary_bound}
  \end{equation}
  for some absolute constant $C>0$.
\end{lem}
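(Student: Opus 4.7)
The strategy is to exploit the fact that $\eta_i(w) = \eta(v_0) - \eta(w-u_i)$, with the difference taken coordinate-wise modulo $2n$, is just an inverted translate of the base function $\eta$. Once this is observed, each of the four claims reduces to a property of $\eta$ which either follows immediately from the piecewise definition~\eqref{eq:eta_def} or is already quantified in Lemma~\ref{lem:eta_Jacobian_and_bad_edge_bound}.

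For~\eqref{eq:tau_i_disjoint_support} I would argue that if $d_{\T_n^2}(w,u_i) \ge \lfloor n^{1/3}\rfloor$, the minimal representative of $w-u_i$ has $\ell^1$-norm at least $\lfloor n^{1/3}\rfloor = \|v_0\|_1$, so the third case of~\eqref{eq:eta_def} gives $\eta(w-u_i) = \eta(v_0)$ and hence $\tau_i(w)=0$. For~\eqref{eq:tau_i_Jacobian_preliminary_bound} the identity $|\eta_i(w_1)-\eta_i(w_2)| = |\eta(w_1-u_i) - \eta(w_2-u_i)|$ gives $\eta_i'(w,k) = \eta'(w-u_i,k)$; summing over $w$ is a simple reindexing, so by the first part of Lemma~\ref{lem:eta_Jacobian_and_bad_edge_bound} the unscaled sum is at most the absolute constant $C$, and the scaling factor $(16\rho(U)\sqrt{\log n})^2$ delivers the claimed bound $256C\rho(U)^2\log n$.

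For~\eqref{eq:tau_i_u_i_bound} I would compute $\eta_i(u_i) = \eta(v_0) - \eta(\zero) = \eta(v_0)$. Substituting $v=v_0$ with $\|v_0\|_1 = \lfloor n^{1/3}\rfloor$ into~\eqref{eq:eta_def} and using the asymptotics $\log(1+n^{1/3}) = \frac{1}{3}\log n + O(1)$ and $\log(1+n^{1/6}) = \frac{1}{6}\log n + O(1)$ to extract the leading order, one obtains $\eta(v_0) \ge \frac{1}{4}\sqrt{\log n}$ for all sufficiently large $n$. Thus $\tau_i(u_i) \ge 4\rho(U)\log n$, which is at least $2\rho(U)\log n + \eps/2$ provided $\rho(U)\log n \ge \eps/4$, and the latter is ensured by~\eqref{eq:n_rho_relation}, which forces $\log n \ge 1/\rho(U)^2$.

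For~\eqref{eq:tau_i_bad_diameter_bound}, since $L(\cdot,\eps)$ depends only on the Lipschitz increments of its argument, the invariance $\eta_i'(w,k) = \eta'(w-u_i,k)$ gives $L(\tau_i,\eps) = L(16\rho(U)\sqrt{\log n}\cdot\eta,\eps)$. Plugging $\alpha = 16\rho(U)\sqrt{\log n}$ and $\|v_0\|_1 = \lfloor n^{1/3}\rfloor$ into~\eqref{eq:L_with_alpha_and_eta}, and using $\sqrt{\log(1+\lfloor n^{1/3}\rfloor)} \ge \frac{1}{\sqrt{3}}\sqrt{\log n}$ for large $n$, the argument of the exponential is at least $\eps/(32\sqrt{3}\,\rho(U))$. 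Taking $\rho(U)$ small enough makes $\exp(\cdot)-1 \ge 2$, so $L(\tau_i,\eps) \ge 2\sqrt{\lfloor n^{1/3}\rfloor} - 2 \ge n^{1/6}$ for large $n$. The only quantitative obstacle is verifying that a single small $\rho(U)$ works simultaneously for~\eqref{eq:tau_i_u_i_bound} and~\eqref{eq:tau_i_bad_diameter_bound}, but these constraints point in the same direction (both are satisfied once $\rho(U)$ is a small enough absolute constant multiple of $\eps$, consistent with~\eqref{eq:n_rho_relation}).
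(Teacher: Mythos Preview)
Your overall strategy---reducing everything about $\tau_i$ to properties of the base function $\eta$ via the relation $\eta_i(w)=\eta(v_0)-\eta(w-u_i)$---is exactly the paper's approach, and your treatment of \eqref{eq:tau_i_disjoint_support}, \eqref{eq:tau_i_u_i_bound} and \eqref{eq:tau_i_bad_diameter_bound} is correct. In particular, your observation that $L(\cdot,\eps)$ depends only on the absolute increments $|\tau(v)-\tau(w)|$ (since the defining condition is symmetric in $v,w$) is the right way to justify $L(\tau_i,\eps)=L(16\rho(U)\sqrt{\log n}\cdot\eta,\eps)$.

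There is, however, a genuine gap in your argument for \eqref{eq:tau_i_Jacobian_preliminary_bound}. The identity $|\eta_i(w_1)-\eta_i(w_2)|=|\eta(w_1-u_i)-\eta(w_2-u_i)|$ is correct, but it does \emph{not} imply $\eta_i'(w,k)=\eta'(w-u_i,k)$, because $\tau'(v,k)$ in \eqref{eq:tau_prime_def} is defined without absolute values. Since $\eta_i$ is a \emph{negated} translate of $\eta$, one gets
\[
\eta_i'(w,k)=\max_{d(u,w)\le k}\bigl[\eta_i(w)-\eta_i(u)\bigr]=\max_{d(u',w')\le k}\eta(u')-\eta(w')
\quad\text{with }w'=w-u_i,
\]
which is the maximal \emph{rise} of $\eta$ near $w'$, not the maximal drop $\eta'(w',k)$. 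These two quantities do not coincide, and their squared sums over $w$ need not agree either (a short example: $\eta=(0,0,0,1)$ on a path of four vertices with $k=2$ gives $\sum_w\eta'(w,2)^2=1$ but $\sum_w\tilde\eta'(w,2)^2=2$). So the ``simple reindexing'' does not reduce the sum to the first part of Lemma~\ref{lem:eta_Jacobian_and_bad_edge_bound}.

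This is why the paper says \eqref{eq:tau_i_Jacobian_preliminary_bound} ``follows from a similar derivation as in the proof of Lemma~\ref{lem:eta_Jacobian_and_bad_edge_bound}'' rather than from that lemma itself. The fix is painless: for the radially nondecreasing $\eta$ one has $\tilde\eta'(w',k+1)\le h(\|w'\|_1+k+1)-h(\|w'\|_1)$ and $\tilde\eta'(w',k+1)=0$ when $\|w'\|_1\ge\|v_0\|_1$, so the same chain of inequalities as in the proof of Lemma~\ref{lem:eta_Jacobian_and_bad_edge_bound} (indeed a slightly simpler one, as the ``$\|w\|_1<k+1$'' boundary term is absent) yields the absolute bound $C$, after which your scaling factor $(16\rho(U)\sqrt{\log n})^2$ gives the claim.
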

\begin{proof}
  Property \eqref{eq:tau_i_disjoint_support} is an immediate
  consequence of the fact that $\eta(v) = \eta(v_0)$ for all vertices $v$ with $\|v\|_1\ge \|v_0\|_1$ and the definition of $\tau_i$.

  To see \eqref{eq:tau_i_u_i_bound}, recall \eqref{eq:n_rho_relation} and observe that
  \begin{equation}\label{eq:eta_i_u_i_bound}
    \eta_i(u_i) = \eta(v_0)\ge \frac{1}{8}(\sqrt{\log n} + 1),
  \end{equation}
  when $\rho(U)$ is sufficiently small, as in
  \eqref{eq:eta_v_lower_bound}. Now use the definition of $\tau_i$
  and the fact that $\eps\le \frac{1}{2}$.

  Since \eqref{eq:eta_i_def} defines $\eta_i$ via $\eta$
  we may use Lemma~\ref{lem:eta_Jacobian_and_bad_edge_bound}, taking
$\rho(U)$ sufficiently small, to obtain
\eqref{eq:tau_i_bad_diameter_bound}. Finally,
\eqref{eq:tau_i_Jacobian_preliminary_bound} follows from a similar
derivation as in the proof of
Lemma~\ref{lem:eta_Jacobian_and_bad_edge_bound}.
\end{proof}
We may now apply Lemma~\ref{lem:lowerboundForShiftedConfiguration}
with $\tau_i$ playing the role of $\tau$ and $A_i$ playing the role
of $A$, noting that by \eqref{eq:tau_i_disjoint_support} and our
choice of the $u_i$, $A_i$ is indeed measurable with respect to the
sigma algebra generated by $\{\varphi_v\colon \tau_i(v) = 0\}$.
Using also the arithmetic-geometric mean inequality and
\eqref{eq:tau_i_u_i_bound} we have
\begin{align}
  \frac{1}{2}&\P\left(B_i\cap A_i\right) = \frac{1}{2}\left[\P\left(\left\{\varphi_{u_i}\ge \rho(U)\log n\right\}\cap A_i\right) +
  \P\left(\left\{\varphi_{u_i}\le -\rho(U)\log n\right\}\cap A_i\right)\right] \ge\nonumber\\
  &\ge \left[\P\left(\left\{\varphi_{u_i}\ge \rho(U)\log n\right\}\cap A_i\right) \P\left(\left\{\varphi_{u_i}\le
  -\rho(U)\log n\right\}\cap A_i\right)\right]^{1/2}\ge\nonumber\\
  &\ge \left[\P\left(\left\{|\varphi_{u_i} - \tau_i(u_i)|\le \rho(U)\log n+\frac{\eps}{2}\right\}\cap A_i\right) \P\left(\left\{|\varphi_{u_i} + \tau_i(u_i)|\le
  \rho(U)\log n+\frac{\eps}{2}\right\}\cap A_i\right)\right]^{1/2} \ge\nonumber\\
  &\ge c(U) s
  \left[\P\left(\left\{|\varphi_{u_i}|\le \rho(U)\log n\right\}\cap A_i\right) -
\P\left(\{J^+(\varphi)J^-(\varphi) <
s^2\}\cup\{M(\varphi)>L(\tau_i,\eps)\}\cap A_i\right)\right]\ge\nonumber\\
&\ge c(U) s
  \left[1 - \P(B_i\cup A_i^c) - \P\left(\{J^+(\varphi)J^-(\varphi) <
s^2\}\cup\{M(\varphi)>L(\tau_i,\eps)\}\right)\right],\label{eq:B_i_union_estimate}
\end{align}
where $s>0$ is arbitrary. Combining Lemma~\ref{lem:Jacobian_and_bad_edge_diameter} with
\eqref{eq:tau_i_bad_diameter_bound} and
\eqref{eq:tau_i_Jacobian_preliminary_bound} we have
\begin{equation*}
  \P\left(\{J^+(\varphi)J^-(\varphi) <
s^2\}\cup\{M(\varphi)>L(\tau_i,\eps)\}\right)\le (2n)^2 2^{-n^{1/6}}
+ \frac{4C\rho(U)^2\log n}{\eps^2\log\left(\frac{1}{s}\right)}.
\end{equation*}
Choosing $s:=\exp(-20C\rho(U)^2\log n / \eps^2)$, taking $\rho(U)$
small enough and using \eqref{eq:n_rho_relation} yields
\begin{equation*}
  \P\left(\{J^+(\varphi)J^-(\varphi) <
s^2\}\cup\{M(\varphi)>L(\tau_i,\eps)\}\right)\le \frac{1}{4}.
\end{equation*}
Plugging back into \eqref{eq:B_i_union_estimate} and summing over
$i$ using \eqref{eq:B_i_union_expression} gives
\begin{equation*}
  \P(\cup_{i=1}^n B_i)\ge 2c(U) n^{-20C\rho(U)^2 / \eps^2} \sum_{i=1}^n\left[\frac{3}{4} - \P(B_i\cup A_i^c)\right].
\end{equation*}
Finally, choosing $\rho(U)$ sufficiently small this implies that
\begin{equation*}
  1\ge \P(\cup_{i=1}^n B_i)\ge \frac{8}{n} \sum_{i=1}^n\left[\frac{3}{4} - \P(B_i\cup
  A_i^c)\right].
\end{equation*}
It follows that there exists some $1\le i\le n$ for which
$\P(B_i\cup A_i^c)\ge \frac{1}{2}$, whence, by the definition of
$A_i$, $\P(\cup_{i=1}^n B_i) \ge \frac{1}{2}$ and the theorem
follows.

\section{Proof of main theorem}\label{sec:main_theorem_proof} In
this section we prove Theorem~\ref{thm:main}.

Let $n\ge 2$. Let $U:\R\to(-\infty,\infty]$ satisfy $U(x)=U(-x)$ and
conditions~\eqref{eq:U_integral_cond} and
\eqref{eq:potential_condition}. Then $U$ satisfies the conditions of
Theorem~\ref{thm:controlledGradients}, whence $U$ has $(1/8,
L)$-controlled gradients on $\T_n^2$ for some $0<L=L(U)<\infty$
which is independent of $n$. By the definition of the controlled
gradients property and condition~\eqref{eq:potential_condition} it
follows that there exists some $K'=K'(U)>0$, independent of $n$,
such that $L<K'\le 2L$ and $U$ is twice continuously differentiable
on $[-K',K']$. Define
\begin{equation}\label{eq:U_tilde_U_relation}
  \tilde{U}(x):=U(K'\cdot x).
\end{equation}
Let $\varphi$ be randomly sampled from $\mu_{\T_n^2, \zero, U}$ and
let $\tilde{\varphi}$ be randomly sampled from $\mu_{\T_n^2, \zero,
\tilde{U}}$. The relation~\eqref{eq:U_tilde_U_relation} implies that
\begin{equation}\label{eq:varphi_tilde_varphi_relation}
  \tilde{\varphi} \,{\buildrel d \over =}\, \varphi/K'.
\end{equation}
Thus $\tilde{U}$ has $(1/8,1-\eps)$-controlled gradients on
$\T_n^2$, where $\eps:=1 - L/K'\in(0,1/2]$, and $\tilde{U}$ is twice
continuously differentiable on $[-1,1]$. We conclude that
$\tilde{U}$ satisfies the conditions in the beginning of
Section~\ref{sec:proofs_for_main_theorems}. The validity of
Theorem~\ref{thm:main} for $U$ now follows from
\eqref{eq:varphi_tilde_varphi_relation} and the validity of
Theorems~\ref{thm:variance_lower_bound}
and~\ref{thm:maximum_lower_bound} for $\tilde{U}$ by noting the
following points: First, $\var(\varphi_v) =
K'^2\var(\tilde{\varphi})$. Second, $\P(|\varphi_v|\le
\delta\sqrt{\log(1 + \|v\|_1)}) = \P(|\tilde{\varphi}_v|\le
\delta\sqrt{\log(1 + \|v\|_1)} / K')$ and we may consider separately
the cases $K'\le 1$ and $K'>1$ when applying
Theorem~\ref{thm:variance_lower_bound}. Third, $\P(|\varphi_v| \ge
c(U)t\sqrt{\log(1+\|v\|_1)}) = \P(|\tilde{\varphi}_v| \ge
c(U)t\sqrt{\log(1+\|v\|_1)} / K')$ and we may choose $c(U) \le K'$
in order to use Theorem~\ref{thm:variance_lower_bound}. Finally,
$\max_{v\in V(\T_n^2)} |\varphi_v| \,{\buildrel d \over =}\, K'
\max_{v\in V(\T_n^2)} |\tilde{\varphi}_v|$.

\section{Discussion and open questions}\label{sec:open_prob}
In this work we prove lower bounds for the fluctuations of
two-dimensional random surfaces. Specifically, we investigate random
surface measures of the form \eqref{eq:mu_T_n_2_U_measure_def} based
on a potential $U$ satisfying the
conditions~\eqref{eq:U_integral_cond} and
\eqref{eq:potential_condition}. These conditions allow for a wide
range of potentials including the hammock potential, when $U(x) = 0$
for $|x|\le 1$ and $U(x) = \infty$ for $|x|>1$, double well and
oscillating potentials. We prove that such random surfaces
delocalize, with the variance of their fluctuations being at least
logarithmic in the side-length of the torus. We also establish related
bounds on the maximum of the surface and on large deviation and
small ball probabilities. In this section we discuss related
research directions and open questions.

{\bf Upper bound on the fluctuations}. It is expected that under
  mild conditions on the potential there holds an upper bound of matching order on the fluctuations of the
  random surface. For instance, that if $\varphi$ is randomly sampled from the
  measure \eqref{eq:mu_T_n_2_U_measure_def} then
  $\var(\varphi_{(n,n)})\le C(U) \log n$ for some $C(U)<\infty$ and
  all $n\ge 2$. One may well speculate the result to hold for all potentials
  satisfying \eqref{eq:U_integral_cond} and
  \eqref{eq:potential_condition} and indeed even in greater generality. Certain potentials are known to satisfy such a bound but it appears that even
  the case of the potential $U(x) = x^4$ has not yet been settled \cite[Remark
  6 and open problem 1]{Velenik:2006kx}.

{\bf Reflection positivity}. Our work relies crucially on
  reflection positivity and the chessboard estimate to establish
  what we called the controlled gradients property, see the
  beginning of Section~\ref{sec:reflection_positivity}. This restricts our results in ways which are probably not
  essential. Specifically, we may handle only random surface measures
  on a torus with even side length and we must normalize such measures at a single
  point. It is desirable to lift these restrictions, by possibly
  arriving at a more illuminating proof of the controlled gradients
  property. This will allow to treat random surface measures on other
  graphs as well as on the graph $\T_n^2$ with other boundary
  conditions. For instance, one would expect our results to hold for
  zero boundary conditions, when $\varphi_v$ is normalized to
  zero at all $v=(v_1,v_2)$ with $\max(|v_1|, |v_2|) = n$.

  With regards to this we put forward that the controlled gradients property possibly holds for any finite, connected
  graph $G$ and any potential $U$, satisfying the conditions \eqref{eq:U_integral_cond} and
  \eqref{eq:potential_condition}, say. Precisely, let $G$ and $U$ be such a graph and potential. Write $K:=\sup\{x\colon U(x)<\infty\}\in(0,\infty]$ and let $\varphi$
  be randomly sampled from the probability measure
  \begin{equation}
    d\mu_{G, v_0, U}(\varphi) := \frac{1}{Z_{G,v_0, U}} \exp\Bigg(-\sum_{(v,w)\in E(G)}
    U(\varphi_v - \varphi_w)\Bigg) \delta_0(d\varphi_{v_0})\prod_{v\in V(G)\setminus\{v_0\}}
    d\varphi_v,
  \end{equation}
  for some vertex $v_0\in V(G)$. Then it may be that for any
  $0<\delta<1$ there exists some $0<L<K$ such that $L$ depends only on $\delta$ and $U$ (and not on $G$) and if we define the random subgraph $\EC(\varphi, L)$ of $G$ by
  \begin{equation}\label{eq:E_varphi_L_random_graph_1}
    \EC(\varphi, L):=\{(v,w)\in E(G)\colon |\varphi_v-\varphi_w|\ge
    L\}
  \end{equation}
  then
  \begin{equation*}
    \P(e_1, \ldots, e_k\in \EC(\varphi,L))\le \delta^k\quad\text{for all
    $k\ge 1$ and distinct $e_1,\ldots, e_k\in E(G)$}.
  \end{equation*}

{\bf More general random surfaces}. One may try to extend
  the applicability of our results in several directions. First, one
  may try and relax the condition \eqref{eq:potential_condition} to
  allow for singular potentials. Ioffe, Shlosman and Velenik \cite{Ioffe:2002fk} introduced a
  technique for proving lower bounds on fluctuations for potentials
  which are small perturbations, in some sense, of smooth
  potentials. These ideas were also incorporated in the work of Richthammer
  \cite{Richthammer:2007fk} upon which our addition algorithm is based. It is a promising avenue for future research to try and combine the
  techniques of \cite{Ioffe:2002fk} with our technique. This may allow to
  treat all continuous (not necessarily differentiable)
  potentials as well as certain classes of discontinuous
  potentials.

  Second, one may try and extend the results to integer-valued random surface
  models. For instance, to probability measures on configurations
  $\varphi:\T_n^2\to\mathbb{Z}$ (rather than $\varphi:\T_n^2\to\R$) with $\varphi(\zero)=0$ for
  which the probability of $\varphi$ is proportional to $\exp\Bigg(-\sum_{(v,w)\in E(\T_n^2)}
  U(\varphi_v - \varphi_w)\Bigg)$. This direction seems much more
  challenging as our technique is based on an argument
  which relies crucially on the continuous nature of the model.
  We mention that while it is expected that many integer-valued random surface
  models have fluctuations with variance of logarithmic order this
  has been established only in two cases: when $U(x) = \beta|x|$
  and $U(x)=\beta x^2$, both with $\beta$ sufficiently small. This result is by Fr\"ohlich and Spencer \cite{Frohlich:1981yq}. It is also
  known that if $\beta$ is large then these models become localized,
  having fluctuations with bounded variance, a transition which is
  called the roughening transition. As specific examples of surfaces for which delocalization is expected but remains unproved
  we mention integer-valued analogs of the hammock potential, when
  $U(x) = 0$ for $x\in\{-1,1\}$ and otherwise $U(x)=\infty$ (the
  graph-homomorphism or homomorphism height function model) or when
  $U(x)=0$ for $x\in\{-M,-M+1,\ldots, M\}$ and otherwise $U(x) =
  \infty$ (the $M$-Lipschitz model). The former of these models can
  be used as a height function representation for the square-ice or $6$-vertex models and is also related to the zero temperature 3-state
  antiferromagnetic Potts model (i.e., uniformly chosen proper colorings of $\T_n^2$ with
  $3$ colors). For more on these models we refer to \cite{Peled:2010fj} where it is proved that the homomorphism height function and $1$-Lipschitz
  models are localized in sufficiently high dimensions.

{\bf Scaling limits and Gibbs states}. The study of various limits for random surface models has received a great deal of attention in the literature. Infinite volume Gibbs states fail to exist for the random surface itself due to its delocalization but may exist for its gradients. Funaki and Spohn \cite{Funaki:1997fk} proved that for uniformly convex potentials $U$, i.e., potentials
satisfying  $0<c\le U''(x)\le C<\infty$, a unique infinite volume
gradient Gibbs measure exists for any value of `tilt'. Another direction studied in \cite{Funaki:1997fk} was to consider the
Langevin dynamics of the random surface. Under hydrodynamic scaling, convergence to
a solution of a PDE, the so-called motion by mean curvature dynamics, was established. Naddaf
and Spencer \cite{Naddaf:1997yq}, with an alternative scaling, proved the convergence of the model to a
continuous Gaussian free field. Further, Giacomin, Olla and Spohn \cite{Giacomin:2001rt} extended these results to
the Langevin dynamics obtaining convergence to an infinite-dimensional Ornstein-Uhlenbeck
process. Under similar convexity assumptions Miller \cite{Miller11} extended the scaling limit results to handle
various choices of boundary conditions. Finally, we mention deep connections with the SLE theory.
Schramm and Sheffield  \cite{Schramm:2009uq} discovered that, in the scaling limit, appropriately defined contour lines
of the two-dimensional discrete Gaussian free field converge to an SLE curve with parameter $\kappa=4$.
It was conjectured that this is a universal phenomena independent of potential details. A significant
contribution in this area has been made by Miller \cite{Miller2} who resolved the conjecture for a large class
of uniformly convex potentials.

It is expected that the results described in this section hold under mild assumptions on
the potential $U$. As a first step, one may let $\varphi$ be randomly sampled from the random surface
model \eqref{eq:mu_T_n_2_U_measure_def} with the potential $U (x) = x^4$
or the hammock potential and try to prove that the law
of $\varphi_{(n,n)}$, suitably normalized, converges to a Gaussian distribution. The above-mentioned works used uniform convexity via the Brascamp-Lieb inequality, Helffer-Sj\"ostrand representation or homogenization techniques and novel techniques may be required to extend the results beyond this setting. The question of unicity for gradient Gibbs states seems more delicate as Biskup and Koteck\'{y} \cite{Biskup:2007fk} gave an example of a non-convex potential admitting multiple
gradient Gibbs states with the same `tilt'.

{\bf Maximum in high dimensions}. Our work establishes that the
expected maximum of the random
  surfaces we consider is of order at least $\log n$ and it is
  expected that this is the correct order of magnitude. A curious question regards the maximum in
  higher dimensions. For instance, denote by $\T_{n}^d$ the
$d$-dimensional discrete torus with vertex set
$\{-n+1,-n+2,\ldots,n-1,n\}^d$ and let $\varphi$ be randomly sampled
from the random surface measure \eqref{eq:mu_T_n_2_U_measure_def}
with $\T_n^2$ replaced by $\T_n^d$ for some $d\ge 3$. It is known
that for the discrete Gaussian free field, when $U(x) = x^2$, the
maximum of the field is typically of order $\sqrt{\log n}$ as $n$
tends to infinity. However, it may well be that the behavior of the
maximum is now potential-specific. How would the maximum behave for
the hammock potential, i.e., for a uniformly chosen Lipschitz
function? Observe that if a Lipschitz function is at height $t$ at a
given vertex then it is at height at least $t/2$ in a ball of radius
$t/2$ around that vertex, a ball containing order $t^d$ vertices.
This raises the possibility that the probability of a random
Lipschitz function to attain height $t$ at a given vertex decays as
$\exp(-c t^d)$. This bound would imply that the typical maximal
height is of order at most $(\log n)^{1/d}$, as $n$ tends to
infinity. Is this the correct order of magnitude? The technique of
Benjamini, Yadin and Yehudayoff \cite{BenjaminiYadinYehudayoff07}
may lead to a lower bound of this order. For the integer-valued
models of Lipschitz functions mentioned above, the homomorphism
height function and 1-Lipschitz models, an upper bound of order
$(\log n)^{1/d}$ on the expected maximum was established in
\cite{Peled:2010fj} in sufficiently high dimensions. We mention also
the works \cite{PeledSamotijYehudayoff13,
PeledSamotijYehudayoff13_2} where the maximum of such Lipschitz
function models is studied on expander and tree graphs.

{\bf Decay of correlations}. Let $\varphi$ be randomly sampled from
the random surface measure \eqref{eq:mu_T_n_2_U_measure_def}. Our
results focus on estimating $\var(\varphi_v)$ for various vertices
$v$, i.e., the diagonal elements of the covariance matrix of
$\varphi$. How do the off-diagonal elements behave? How fast do the
values of $\varphi$ decorrelate? A related question is to study the
decay of correlations for the gradient of $\varphi$. Sufficiently
fast decay of gradient correlations will lead to an upper bound on
$\var(\varphi_v)$, by writing $\varphi_v$ as the sum of the
gradients of $\varphi$ on a path leading from $\zero$ to $v$ and
averaging over many such paths. With regards to this we mention the
results of Aizenman \cite{Aizenman:1994fk} and Pinson
\cite{Pinson98}, following ideas of Patrascioiu and Seiler
\cite{Patrascioiu:1992fk}, who give a lower bound, in a certain
sense, for the decay of correlations for the Hammock potential and
for the integer-valued homomorphism height function model mentioned
above.

{\bf High-dimensional convex geometry}. The case that the potential
$U$ is the hammock potential is natural also from a geometric point
of view. In this case the measure \eqref{eq:mu_T_n_2_U_measure_def}
is the uniform measure on the high-dimensional convex polytope of
Lipschitz functions defined by
\begin{equation*}
  \text{Lip}:=\cbr{\varphi:\T_n^2\to\R\colon \varphi_{\zero}=0\text{ and }|\varphi_v-\varphi_w|\leq1\text{ when $v\sim w$}}.
\end{equation*}
The field of convex geometry is highly developed and we mention here
the central limit theorem of Klartag \cite{Klartag07} which states
that uniform measures on high-dimensional convex bodies have many
projections which are approximately Gaussian. It would be
interesting to use this point of view to obtain new results for the
random surface with the hammock potential.

\section*{Acknowledgments} We wish to thank people whose support we enjoyed during the research connected with this paper. First of all we thank Yvan Velenik who introduced PM to the problem, put the two authors together and suggested using the techniques developed in \cite{Richthammer:2007fk}.
Further, we thank Senya Shlosman for the suggestion to use
reflection positivity and Marek Biskup and Roman Koteck\'{y} for
useful discussions. Finally, we thank an anonymous referee whose
many suggestions greatly improved the paper.

\bibliographystyle{plain}

\begin{thebibliography}{10}

\bibitem{Aizenman:1994fk}
M.~Aizenman.
\newblock On the slow decay of {${\rm O}(2)$} correlations in the absence of
  topological excitations: remark on the {P}atrascioiu-{S}eiler model.
\newblock {\em J. Statist. Phys.}, 77(1-2):351--359, 1994.

\bibitem{BenjaminiYadinYehudayoff07}
I.~Benjamini, A.~Yadin, and A.~Yehudayoff.
\newblock Random graph-homomorphisms and logarithmic degree.
\newblock {\em Electron. J. Probab.}, 12:no. 32, 926--950, 2007.

\bibitem{Biskup:2009fk}
M.~Biskup.
\newblock Reflection positivity and phase transitions in lattice spin models.
\newblock In {\em Methods of contemporary mathematical statistical physics},
  volume 1970 of {\em Lecture Notes in Math.}, pages 1--86. Springer, Berlin,
  2009.

\bibitem{Biskup:2007fk}
M.~Biskup and R.~Koteck{\'y}.
\newblock Phase coexistence of gradient {G}ibbs states.
\newblock {\em Probab. Theory Related Fields}, 139(1-2):1--39, 2007.

\bibitem{Brascamp:1975vn}
H.~J. Brascamp, E.~H. Lieb, and J.~L. Lebowitz.
\newblock The statistical mechanics of anharmonic lattices.
\newblock In {\em Proceedings of the 40th {S}ession of the {I}nternational
  {S}tatistical {I}nstitute ({W}arsaw, 1975), {V}ol. 1. {I}nvited papers},
  volume~46, pages 393--404 (1976), 1975.

\bibitem{Dobrushin:1975ly}
R.~L. Dobrushin and S.~B. Shlosman.
\newblock Absence of breakdown of continuous symmetry in two-dimensional models
  of statistical physics.
\newblock {\em Comm. Math. Phys.}, 42:31--40, 1975.

\bibitem{Dobrushin:1980ve}
R.~L. Dobrushin and S.~B. Shlosman.
\newblock Nonexistence of one- and two-dimensional {G}ibbs fields with
  noncompact group of continuous symmetries.
\newblock In {\em Multicomponent random systems}, volume~6 of {\em Adv. Probab.
  Related Topics}, pages 199--210. Dekker, New York, 1980.

\bibitem{Evans:1992fk}
L.~C. Evans and R.~F. Gariepy.
\newblock {\em Measure theory and fine properties of functions}.
\newblock Studies in Advanced Mathematics. CRC Press, Boca Raton, FL, 1992.

\bibitem{Frohlich:1981qf}
J.~Fr{\"o}hlich and C.~E. Pfister.
\newblock On the absence of spontaneous symmetry breaking and of crystalline
  ordering in two-dimensional systems.
\newblock {\em Comm. Math. Phys.}, 81(2):277--298, 1981.

\bibitem{Frohlich:1981yq}
J.~Fr{\"o}hlich and T.~Spencer.
\newblock The {K}osterlitz-{T}houless transition in two-dimensional abelian
  spin systems and the {C}oulomb gas.
\newblock {\em Comm. Math. Phys.}, 81(4):527--602, 1981.

\bibitem{Funaki:1997fk}
T.~Funaki and H.~Spohn.
\newblock Motion by mean curvature from the {G}inzburg-{L}andau {$\nabla \phi$}
  interface model.
\newblock {\em Comm. Math. Phys.}, 185(1):1--36, 1997.

\bibitem{Gagnebin:2013lr}
M.~Gagnebin and Y.~Velenik.
\newblock Upper bound on the decay of correlations in a general class of
  {O(N)}-symmetric models.
\newblock {\em arXiv:1309.2432}, 2013.

\bibitem{Giacomin:2001rt}
G.~Giacomin, S.~Olla, and H.~Spohn.
\newblock Equilibrium fluctuations for {$\nabla\phi$} interface model.
\newblock {\em Ann. Probab.}, 29(3):1138--1172, 2001.

\bibitem{Hohenberg:1967zr}
P.~C. Hohenberg.
\newblock Existence of long-range order in one and two dimensions.
\newblock {\em Phys. Rev.}, 158(2):383, 1967.

\bibitem{Ioffe:2002fk}
D.~Ioffe, S.~Shlosman, and Y.~Velenik.
\newblock 2{D} models of statistical physics with continuous symmetry: the case
  of singular interactions.
\newblock {\em Comm. Math. Phys.}, 226(2):433--454, 2002.

\bibitem{Klartag07}
B.~Klartag.
\newblock A central limit theorem for convex sets.
\newblock {\em Invent. Math.}, 168(1):91--131, 2007.

\bibitem{McBryan:1977fk}
O.~A. McBryan and T.~Spencer.
\newblock On the decay of correlations in {${\rm SO}(n)$}-symmetric
  ferromagnets.
\newblock {\em Comm. Math. Phys.}, 53(3):299--302, 1977.

\bibitem{Mermin:1966fr}
N.~D. Mermin and H.~Wagner.
\newblock Absence of ferromagnetism or antiferromagnetism in one- or
  two-dimensional isotropic {H}eisenberg models.
\newblock {\em Phys. Rev. Lett.}, 17, 1966.

\bibitem{Miller2}
J.~Miller.
\newblock Universality for {S}{L}{E}(4).
\newblock {\em preprint arXiv:1010.1356}, 2010.

\bibitem{Miller11}
J.~Miller.
\newblock Fluctuations for the {G}inzburg-{L}andau {$\nabla\phi$} interface
  model on a bounded domain.
\newblock {\em Comm. Math. Phys.}, 308(3):591--639, 2011.

\bibitem{Naddaf:1997yq}
A.~Naddaf and T.~Spencer.
\newblock On homogenization and scaling limit of some gradient perturbations of
  a massless free field.
\newblock {\em Comm. Math. Phys.}, 183(1):55--84, 1997.

\bibitem{Patrascioiu:1992fk}
A.~Patrascioiu and E.~Seiler.
\newblock Phase structure of two-dimensional spin models and percolation.
\newblock {\em J. Statist. Phys.}, 69(3-4):573--595, 1992.

\bibitem{Peled:2010fj}
R.~Peled.
\newblock High-dimensional {L}ipschitz functions are typically flat.
\newblock {\em Ann. Probab. (to appear), preprint arXiv:1005.4636}, 2010.

\bibitem{PeledSamotijYehudayoff13_2}
R.~Peled, W.~Samotij, and A.~Yehudayoff.
\newblock Grounded {L}ipschitz functions on trees are typically flat.
\newblock {\em Electron. Commun. Probab.}, 18:no. 55, 9, 2013.

\bibitem{PeledSamotijYehudayoff13}
R.~Peled, W.~Samotij, and A.~Yehudayoff.
\newblock Lipschitz functions on expanders are typically flat.
\newblock {\em Combin. Probab. Comput.}, 22(4):566--591, 2013.

\bibitem{Pfister:1981qy}
C.~E. Pfister.
\newblock On the symmetry of the {G}ibbs states in two-dimensional lattice
  systems.
\newblock {\em Comm. Math. Phys.}, 79(2):181--188, 1981.

\bibitem{Pinson98}
H.~T. Pinson.
\newblock A slow decay of a connectivity function in a broad class of {SOS}
  models.
\newblock {\em Nuclear Phys. B}, 525(3):664--670, 1998.

\bibitem{ProppWilson96}
J.~G. Propp and D.~B. Wilson.
\newblock Exact sampling with coupled {M}arkov chains and applications to
  statistical mechanics.
\newblock {\em Random Structures Algorithms}, 9(1-2):223--252, 1996.

\bibitem{Richthammer:2007fk}
T.~Richthammer.
\newblock Translation-invariance of two-dimensional {G}ibbsian point processes.
\newblock {\em Comm. Math. Phys.}, 274(1):81--122, 2007.

\bibitem{Schramm:2009uq}
O.~Schramm and S.~Sheffield.
\newblock Contour lines of the two-dimensional discrete {G}aussian free field.
\newblock {\em Acta Math.}, 202(1):21--137, 2009.

\bibitem{Velenik:2006kx}
Y.~Velenik.
\newblock Localization and delocalization of random interfaces.
\newblock {\em Probab. Surv.}, 3:112--169 (electronic), 2006.

\end{thebibliography}

\end{document}